\theoremstyle{plain}
\newtheorem{theorem}{Theorem}
\newtheorem*{theorem*}{Theorem}
\newtheorem{proposition}[theorem]{Proposition}
\newtheorem*{proposition*}{Proposition}
\newtheorem{lemma}[theorem]{Lemma}
\newtheorem*{lemma*}{Lemma}
\newtheorem*{corollary*}{Corollary}
\theoremstyle{remark}
\newtheorem*{remark*}{Remark}
\theoremstyle{definition}
\newtheorem*{acknowledgment}{Acknowledgment}
\def\SQ{\mathbf{Q}}
\def\SC{\mathbf{C}}
\def\SP{\mathbf{P}}
\def\mult{\mathop{\rm mult}\nolimits}
\def\supp{\mathop{\rm supp}\nolimits}
\def\ms{\mathop{\overline{m}}\nolimits}
\def\KB{\mathop{\bar{\kappa}}\nolimits}
\def\Ch{\mathop{\rm Ch}\nolimits}
\def\NV{\mathop r\nolimits}
\def\TA{\mathbin{\ast}}
\def\TP#1{{\vphantom{#1}}^{\mathit{t}}{#1}}
\def\NS{\mathop{o}\nolimits}
\def\TW#1{t_{#1}}
\title{On a new class of rational cuspidal plane curves with two cusps}
\author{Keita Tono}
\date{}
\begin{document}
\maketitle
\def\thefootnote{}
\footnotetext{\textit{2000 Mathematics Subject Classification.} 14H50}
\footnotetext{\textit{Key words and phrases.} plane curve, rational curve, cusp.}
\footnotetext{Supported by JSPS Grant-in-Aid for Scientific Research (22540040).}
\def\thefootnote{1}
\begin{abstract}
In this paper,
we consider rational cuspidal plane curves having exactly two cusps
whose complements have logarithmic Kodaira dimension two.
We classify such curves with the property that
the strict transforms of them
via the minimal embedded resolution of the cusps have
maximal self-intersection number.
\end{abstract}
%%%%%%%%%%%%%%%%%%%%%%%%%%%%%%%%%%%%%%%%%%%%%%%%%%%%%%%%%%%%%%%%%%%%%%%%%%%%%%%
\section{Introduction}
%%%%%%%%%%%%%%%%%%%%%%%%%%%%%%%%%%%%%%%%%%%%%%%%%%%%%%%%%%%%%%%%%%%%%%%%%%%%%%%
Let $C$ be an algebraic curve on $\SP^2=\SP^2(\SC)$.
A singular point of $C$ is said to be a \emph{cusp}
if it is a locally irreducible singular point.
We say that $C$ is \emph{cuspidal} (resp.~\emph{bicuspidal})
if $C$ has only cusps (resp.~two cusps) as its singular points.
For a cusp $P$ of $C$,
we denote the \emph{multiplicity sequence} of $(C,P)$ by $\ms_P(C)$,
or simply by $\ms_P$.
We usually omit the last 1's in $\ms_P$.
We use the abbreviation $m_k$
for a subsequence of $\ms_P$ consisting of $k$ consecutive $m$'s.
For example, $(2_k)$ means an $A_{2k}$ singularity.
The set of the multiplicity sequences of the cusps of
a cuspidal plane curve $C$
will be called the \emph{numerical data} of $C$.
For example, the rational quartic with three cusps has the numerical data
$\{(2),(2),(2)\}$.
We denote by $\KB=\KB(\SP^2\setminus C)$
the logarithmic Kodaira dimension
of the complement $\SP^2\setminus C$.

Suppose that $C$ is rational and bicuspidal.
By \cite{Wak,Ts}, we have $\KB\ge 1$.
Let $C'$ denote the strict transform of $C$
via the minimal embedded resolution of the cusps of $C$.
We characterize rational bicuspidal plane curves $C$ with $\KB=1$
by $(C')^2$ in the following way.
\begin{theorem}\label{thm0}
If $C$ is a rational bicuspidal plane curve, then $(C')^2\le 0$.
Moreover,  $(C')^2=0$ if and only if $\KB=1$.
\end{theorem}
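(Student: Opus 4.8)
\emph{Proof strategy.}
The plan is to work on the minimal embedded resolution $\pi\colon V\to\SP^2$ of the two cusps $P_1,P_2$ of $C$. Write $D=\pi^{-1}(C)=C'+\mathcal{E}$ with $\mathcal{E}=\mathcal{E}_1+\mathcal{E}_2$ the exceptional divisor, $\mathcal{E}_j$ lying over $P_j$; then $V$ is a smooth projective rational surface, $D$ is an SNC divisor, $V\setminus D\cong\SP^2\setminus C$, and $\KB(V\setminus D)=\KB\ge1$ by \cite{Wak,Ts}. The crucial first step is a single numerical identity. Since $P_j$ is unibranch, $C'$ is a smooth rational curve meeting $\mathcal{E}$ transversally in exactly two points, one over each cusp, so $\mathcal{E}\cdot C'=2$, and adjunction on $V$ yields
\[
(K_V+D)\cdot C'=\bigl(2p_a(C')-2\bigr)+(D-C')\cdot C'=-2+\mathcal{E}\cdot C'=0,\qquad (C')^2=-2-K_V\cdot C'.
\]

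To obtain $(C')^2\le0$ I would use that $\KB\ge0$, so $K_V+D$ is pseudo-effective and admits a Zariski decomposition $K_V+D\equiv P+N$ with $P$ nef, $N\ge0$ and $\supp N$ of negative-definite intersection matrix; moreover $P\not\equiv0$, and $P^2>0$ iff $\KB=2$ while $P^2=0$ iff $\KB=1$. From $0=(K_V+D)\cdot C'=P\cdot C'+N\cdot C'$ one argues by cases: if $C'$ is a component of $\supp N$, then $(C')^2<0$ by negative-definiteness; otherwise $N\cdot C'\ge0$ and $P\cdot C'\ge0$ force $P\cdot C'=0$, whence the Hodge index theorem (using $P\not\equiv0$, $P^2\ge0$) gives $(C')^2\le0$, strictly when $P^2>0$. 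So $(C')^2\le0$ in all cases. (Alternatively this can be done via Fujita's peeling and the almost minimal model of $(V,D)$.)

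Next, for $(C')^2=0\Rightarrow\KB=1$: when $(C')^2=0$ one has $K_V\cdot C'=-2$, so Riemann--Roch gives $\chi(\mathcal{O}_V(C'))=2$, and $h^2(\mathcal{O}_V(C'))=h^0(K_V-C')=0$ because $C'$ is effective and $h^0(K_V)=0$; hence $h^0(\mathcal{O}_V(C'))\ge2$. Since $C'$ is irreducible it has no fixed part in the pencil $|C'|$, and since $(C')^2=0$ the pencil is base-point free, so it defines a fibration $f\colon V\to\SP^1$ having $C'$ as a fibre. A general fibre $F\equiv C'$ is disjoint from $C'$, so $F\cap D=F\cap\mathcal{E}$ consists of $F\cdot\mathcal{E}=F\cdot D=(C')^2+\mathcal{E}\cdot C'=2$ reduced points, i.e.\ $F\setminus D\cong\SC^*$. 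Applying the easy addition theorem for logarithmic Kodaira dimension to $f$ restricted to $V\setminus D$ gives $\KB(V\setminus D)\le1+\KB(\SC^*)=1$, and with $\KB\ge1$ we conclude $\KB=1$.

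The remaining implication $\KB=1\Rightarrow(C')^2=0$ is where I expect the real difficulty. With $P^2=0$ and $P\not\equiv0$ in the Zariski decomposition, one first checks $P\cdot C'=0$ (if $C'\subseteq\supp N$ this holds because $P\cdot N_i=0$ for every component $N_i$ of $N$; otherwise it was shown above). The Iitaka fibration of $K_V+D$ gives a fibration $g$ of $V$ (or of a suitable blow-up) over $\SP^1$ whose general fibre is $\SC^*$ (the elliptic-fibre alternative being excluded as $C$ is irreducible), along which $\supp N$ is vertical; from $P\cdot C'=0$ one then deduces that $C'$ is $g$-vertical. If $C'$ is an entire reduced fibre, then $(C')^2=0$ and we are done; so everything reduces to excluding the case in which $C'$ is a proper component of a singular fibre, where $(C')^2<0$. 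To rule this out I would analyse the dual graph of $D$ — which, because $C$ has exactly two cusps, is a tree assembled from the resolution graphs of $P_1$ and $P_2$ joined through $C'$ — together with the constraints $C'\cdot\pi^*\ell=\deg C>0$ (for $\ell$ a line) and "a general fibre of $g$ meets $D$ in two points", which should be incompatible with $C'$ being a proper fibre component. Carrying out this configuration analysis, and locating precisely where the two-cusp hypothesis is used, is the main obstacle.
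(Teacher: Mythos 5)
Your computation $(K_V+D)\cdot C'=0$, your Zariski--decomposition/Hodge-index argument for $(C')^2\le 0$, and your proof of $(C')^2=0\Rightarrow\KB=1$ (pencil $|C'|$, general fibre meeting $D$ in two points, easy addition, then $\KB\ge1$ from \cite{Wak,Ts}) are all sound. In fact your route to the inequality $(C')^2\le 0$ is genuinely different from the paper's: the paper deduces it by showing $(C')^2\ge 0\Rightarrow\dim|C'|=1+(C')^2\Rightarrow\KB\le 1\Rightarrow\KB=1$ and then invoking the converse implication $\KB=1\Rightarrow (C')^2=0$, so in the paper even the bound depends on the hard direction, whereas your Hodge-index argument gets it independently.

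The genuine gap is the implication $\KB=1\Rightarrow(C')^2=0$, which you sketch and then explicitly defer (``the main obstacle''). This is precisely where the paper does all its work, and the missing configuration analysis is not routine. The paper passes to a strictly minimal model $(\widetilde V,\widetilde D)$ in the sense of Flenner--Zaidenberg, first checking (Lemma~\ref{lem:pi}) that the contraction $\pi$ never contracts a curve meeting $C'$, so that self-intersections of $C'$ are controlled. Kawamata's structure theorem gives a $\SP^1$-fibration with $\widetilde D F=2$; Kizuka's theorem forces it to be of untwisted type, so $\widetilde H$ consists of two disjoint $1$-sections; and the Flenner--Zaidenberg classification says every singular fibre is a linear chain $[A,1,B]$ in which the two sections touch only the first vertex of $A$ and the last vertex of $B$ --- in particular \emph{no single fibre component meets both sections}. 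Proposition~\ref{prop:unt} then shows the two sections are $\widetilde D_0^{(1)}$ and $\widetilde D_0^{(2)}$ (the curves over the last blow-ups at the two cusps); since $C'$ meets both of them, $\widetilde C'$ cannot be a proper component of a singular fibre and must be an entire (full) fibre, giving $(C')^2=(\widetilde C')^2=0$. This is exactly the answer to your own question of where the two-cusp hypothesis enters: it is what makes $C'$ meet two distinct horizontal sections. Without importing these structure results (or reproving them), your case ``$C'$ is a proper component of a singular fibre, $(C')^2<0$'' is not excluded, so the theorem as stated is not yet proved.
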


We next consider rational bicuspidal plane curves $C$ with $(C')^2=-1$.
\begin{theorem}\label{thm1}
The numerical data of
a rational bicuspidal plane curve $C$ with $(C')^2=-1$
coincides with one of those in the following table, where $a$ is a positive integer.
\begin{center}
\def\arraystretch{1.5}
\begin{tabular}{c|l|l}
No. & \multicolumn{1}{|c}{Numerical data} & \multicolumn{1}{|c}{Degree} \\ \hline
1 & $\{(ab+b-1,ab-1,b_{a-1},b-1),$ $((ab)_2,b_{a})\}$ \hfill ($b\ge 2$) & $2ab+b-1$ \\
2 & $\{(ab+b,ab,b_{a}),$ $((ab+1)_2,b_{a})\}$ \hfill ($b\ge 2$) & $2ab+b+1$ \\
3 & $\{(ab+1,ab-b+1,b_{a-1}),$ $((ab)_2,b_{a})\}$ \hfill ($b\ge 3$) & $2ab+1$ \\
4 & $\{(ab+b,ab,b_{a}),$ $((ab+b-1)_2,b_a,b-1)\}$ \hfill ($b\ge 3$) & $2ab+2b-1$
\end{tabular}
\def\arraystretch{1}
\end{center}
Conversely,
for a given numerical data in the above table,
there exists a rational cuspidal plane curve
having that data.
\end{theorem}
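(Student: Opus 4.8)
The plan is to establish the two halves of the statement separately. For the classification, suppose $C$ is rational and bicuspidal with cusps $P_1,P_2$ and $(C')^2=-1$. By Theorem~\ref{thm0} together with the bound $\KB\ge 1$ of \cite{Wak,Ts}, the value $\KB(\SP^2\setminus C)=2$ is forced, so $\SP^2\setminus C$ is of log general type. Let $\sigma\colon V\to\SP^2$ be the minimal embedded resolution of $P_1,P_2$ and put $D=\sigma^{-1}(C)_{\mathrm{red}}=C'+E_1+E_2$, where $E_i=\sigma^{-1}(P_i)_{\mathrm{red}}$; then $(V,D)$ is a smooth rational surface with SNC boundary. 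Since $C$ is rational, $C'$ is a smooth rational curve, and since $\sigma$ is the minimal embedded resolution, $C'$ meets each $E_i$ transversally at a single point of a single component; hence $(C')^2=-1$ makes $C'$ a $(-1)$-curve, and the weighted dual graph $\Gamma(D)$ is the tree obtained by joining the two cusp trees $\Gamma(E_1),\Gamma(E_2)$ through a new vertex $C'$ of weight $-1$.

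I would then extract the combinatorial constraints on $\Gamma(D)$ from $\KB(V\setminus D)=2$. Writing $K_V+D=P+N$ for the Zariski decomposition, $P$ is nef and big ($P^2>0$) while $N\ge 0$ is the bark of $D$, supported on its maximal rational twigs and on any rational club. Running the peeling process (equivalently, the log minimal model program) on $(V,D)$ and tracking at each step which $(-1)$-curve is contracted, one links the self-intersection weights occurring in $\Gamma(E_1),\Gamma(E_2)$ to the degree $d$ and to the multiplicity sequences via the genus formula $\sum_{i,j}\binom{m_{ij}}{2}=\binom{d-1}{2}$ and the identity $(C')^2=3d-2-\sum_{i,j}m_{ij}$ (sum over the complete multiplicity sequences of $P_1,P_2$). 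Combined with the nefness and bigness of $P$ and the effectivity of $N$, these restrictions leave only finitely many \emph{shapes} of $\Gamma(D)$, in each of which all but two maximal chains have fixed length. A finite case analysis — organized by the configurations around the two branch vertices and around $C'$, and discarding whatever forces $\KB\le 1$ or $(C')^2\ne -1$ — then matches the surviving shapes with the four series in the table, $a$ and $b$ recording the lengths of the two variable chains.

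For realizability I would produce each curve by Cremona surgery. Starting from a line (or a suitable low-degree cuspidal curve in the base case of the family at hand), one applies an explicitly prescribed sequence of quadratic Cremona transformations — centered at points and infinitely near points read off from the target graph $\Gamma(D)$ — and checks, by bookkeeping the multiplicities at the centers, that the resulting plane curve is rational, has the degree in the last column, and has exactly two cusps with the prescribed multiplicity sequences. The quadratic transformations come in blocks indexed by $a$ and by $b$: once the base curves of the four families are written down, one block raises $a$ by one and another raises $b$ by one, so the construction is essentially inductive in the parameters. (Alternatively, each family can be exhibited inside a pencil of curves with prescribed high contact, but the Cremona description keeps the multiplicity bookkeeping most transparent.)

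The heart of the proof, and the step I expect to be the main obstacle, is the classification: one must show that imposing $(C')^2=-1$ on top of $\KB=2$ is rigid enough to permit only these four infinite series. This is not a finite problem, since $d$ and the multiplicities grow with $a$ and $b$, so the reduction must be to finitely many combinatorial types of $\Gamma(D)$, and it needs a careful exhaustive discussion of the ways the two cusp trees can be joined through a single $(-1)$-curve while keeping $K_V+D$ of log general type with $C'$ exactly a $(-1)$-curve. By contrast, once the correct Cremona sequences have been identified, the realizability step is a lengthy but routine verification.
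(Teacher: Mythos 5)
The central gap is in the classification half. Your plan is to run peeling/Zariski decomposition on $(V,D)$ and to combine the bark with the genus formula and the adjunction identity $(C')^2=3d-2-\sum m_{ij}$ to cut the possible graphs $\Gamma(D)$ down to finitely many shapes. But nothing in that toolkit actually produces such a finiteness statement: the bark only constrains the twigs of $D$, and the two numerical identities are two equations in an unbounded number of unknowns (all the multiplicities of both cusps). You yourself flag this reduction as "the main obstacle," and as written it is not an argument but a placeholder for one. The idea that makes the problem tractable --- and which is absent from your proposal --- is to exploit $(C')^2=-1$ \emph{geometrically}: contracting the $(-1)$-curve $C'$ turns the component $D^{(1)}_0$ (the exceptional curve of the last blow-up over $P_1$) into a $0$-curve, hence a fiber of a $\SP^1$-fibration $p:V\to\SP^1$. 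The divisor $D$ then contains exactly three $1$-sections of $p$, so $p$ restricts to a $\SC^{\ast\ast}$-fibration on the $\SQ$-homology plane $V\setminus D$, and the Miyanishi--Sugie classification of singular fibers of such fibrations (together with $\KB=2$, which forces every component of a singular fiber outside $D$ to meet $D$ in at least two points) pins down the global shape of $\Gamma(D)$: exactly three singular fibers, each of type $(\mathrm{III}_1)$. From there the paper's linear-chain calculus (adjoints, the $\ast$ operation, Lemma~\ref{lem:bu}, Proposition~\ref{prop:cres}) converts the contractibility conditions on each fiber into the four series of numerical data. Without the fibration your case analysis has no finite list of configurations to enumerate.

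On the converse, your Cremona-transformation scheme is plausible in outline but again unexecuted: you would still have to exhibit the base curves and verify the multiplicity bookkeeping for each of the four families. The paper instead reverses its own analysis: it checks via \cite[Proposition 4.7]{fu} and Lemma~\ref{lem:bu} that each candidate graph is realized by blow-ups over three sections and three fibers of $\Sigma_0$, and that the same configuration blows down to $\SP^2$ producing a curve with the two prescribed cusps. Either route could work, but yours needs the explicit centers and base curves to count as a proof.
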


In \cite{fe},
many sequences of rational bicuspidal plane curves
were constructed.
The numerical data of the curves with $(C')^2=-1$ among them
coincide with the data 1, 2 and 3 with $a=1$ in Theorem~\ref{thm1}.
%
%
%
%
%%%%%%%%%%%%%%%%%%%%%%%%%%%%%%%%%%%%%%%%%%%%%%%%%%%%%%%%%%%%%%%%%%%%%%%%%%%%%%%
\section{Preliminaries}
%%%%%%%%%%%%%%%%%%%%%%%%%%%%%%%%%%%%%%%%%%%%%%%%%%%%%%%%%%%%%%%%%%%%%%%%%%%%%%%
Let $D$ be a divisor on a smooth surface $V$,
$\varphi:V'\rightarrow V$ a composite of
successive blow-ups
and $B\subset V'$ a divisor.
We say that
$\varphi$ \emph{contracts} $B$ to $D$,
or simply that $B$ \emph{shrinks to} $D$
if
$\varphi(\supp{B})=\supp{D}$ and
each center of blow-ups of $\varphi$
is on $D$ or one of its preimages.
Let $D_1,\ldots,D_r$ be the irreducible components of $D$.
We call $D$ an \emph{SNC-divisor} if
$D$ is a reduced effective divisor,
each $D_i$ is smooth,
$D_iD_j\le 1$ for distinct $D_i,D_j$,
and $D_i\cap D_j\cap D_k=\emptyset$ for distinct $D_i,D_j,D_k$.

Assume that $D$ is an SNC-divisor
and that each $D_i$ is projective.
Let $\Gamma=\Gamma(D)$ denote the dual graph of $D$.
We give the vertex corresponding to a component $D_i$
the weight $D_i^2$.
We sometimes do not distinguish between $D$
and its weighted dual graph $\Gamma$.
We use the following notation and terminology
(cf.~\cite[Section 3]{fu} and \cite[Chapter 1]{mits}).
A blow-up at a point $P\in D$
is said to be \emph{sprouting} (resp.~\emph{subdivisional})
\emph{with respect to} $D$
if $P$ is a smooth point (resp.~node) of $D$.
We also use this terminology
for the case in which $D$ is a point.
By definition, the blow-up is subdivisional in this case.
A component $D_i$ is called a \emph{branching component} of $D$
if $D_i(D-D_i)\ge 3$.

Assume that $\Gamma$ is connected and linear.
In cases where $r>1$,
the weighted linear graph $\Gamma$ together with
a direction from an endpoint to the other
is called a \emph{linear chain}.
By definition,
the empty graph $\emptyset$
and a weighted graph consisting of a single vertex without edges
are linear chains.
If necessary, renumber $D_1,\ldots,D_r$ 
so that the direction of the linear chain $\Gamma$ is from $D_1$ to $D_r$
and $D_iD_{i+1}=1$ for $i=1,\ldots,r-1$.
We denote $\Gamma$ by $[-D_1^2,\ldots,-D_r^2]$.
We sometimes write $\Gamma$ as $[D_1,\ldots,D_r]$.
The linear chain is called \emph{rational} if every $D_i$ is rational.
In this paper, we always assume that every linear chain is rational.
The linear chain $\Gamma$ is called \emph{admissible} 
if it is not empty and $D_i^2\le -2$ for each $i$.
Set $\NV(\Gamma)=r$.
We define
the \emph{discriminant} $d(\Gamma)$ of $\Gamma$
as the determinant of the $r\times r$ matrix $(-D_i D_j)$.
We set $d(\emptyset)=1$.

Let $A=[a_1,\ldots,a_r]$ be a linear chain.
We use the following notation if $A\ne\emptyset$:
\[
\TP{A}:=[a_r,\ldots,a_1],\ 
\overline{A}:=[a_2,\ldots,a_r],\ 
\underline{A}:=[a_1,\ldots,a_{r-1}].
\]
The discriminant $d(A)$ has the following properties (\cite[Lemma 3.6]{fu}).
\begin{lemma}\label{lem:det1}
Let $A=[a_1,\ldots,a_r]$ be a linear chain.
\begin{enumerate}
\item[\textnormal{(i)}]
If $r>1$, then
$d(A)=a_1 d(\overline{A})-d(\overline{\overline{A}})=d(\TP{A})=a_r d(\underline{A})-d(\underline{\underline{A}})$.
\item[\textnormal{(ii)}]
If $r>1$, then
$d(\overline{A})d(\underline{A})-d(A)d(\underline{\overline{A}})=1$.
\item[\textnormal{(iii)}]
If $A$ is admissible,
then $\gcd(d(A),d(\overline{A}))=1$ and $d(A)>d(\overline{A})>0$.
\end{enumerate}
\end{lemma}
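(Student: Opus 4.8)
The plan is to translate everything into a statement about the symmetric tridiagonal matrix $M(A)$ defining $d(A)$. Since the dual graph is linear, $M(A)=(-D_iD_j)$ has the entries $a_1,\ldots,a_r$ on the diagonal, $-1$ immediately above and below the diagonal, and $0$ elsewhere, so $d(A)=\det M(A)$ is a classical ``continuant'' and all three parts follow from Laplace expansion together with two short inductions. For (i) I would expand $\det M(A)$ along the first row: the $(1,1)$-cofactor contributes $a_1\,d(\overline{A})$, because deleting the first row and column leaves the matrix $M(\overline{A})$; the $(1,2)$-entry equals $-1$, and deleting its row and column leaves a matrix whose first column is $(-1,0,\ldots,0)^{\mathit t}$ sitting above the block $M(\overline{\overline{A}})$, so this entry contributes $-d(\overline{\overline{A}})$; all other entries of the first row vanish. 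This gives $d(A)=a_1d(\overline{A})-d(\overline{\overline{A}})$, and expanding along the last row instead gives the mirror identity $d(A)=a_rd(\underline{A})-d(\underline{\underline{A}})$. Finally, reversing the chain conjugates $M(A)$ by the permutation matrix reversing the indices, which preserves the determinant, so $d(\TP{A})=d(A)$.

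For (ii) I would induct on $\NV(A)=r$. The case $r=2$ is the direct check $a_2a_1-(a_1a_2-1)\cdot d(\emptyset)=1$. For $r\ge 3$, use (i) to write $d(A)=a_1d(\overline{A})-d(\overline{\overline{A}})$ and, applying (i) to the chain $\underline{A}$ of length $r-1$, $d(\underline{A})=a_1d(\underline{\overline{A}})-d(\underline{\overline{\overline{A}}})$, using that deleting the first vertex and deleting the last vertex commute. Substituting these into $d(\overline{A})d(\underline{A})-d(A)d(\underline{\overline{A}})$ makes the two $a_1$-terms cancel, and what is left is exactly $d(\overline{\overline{A}})d(\underline{\overline{A}})-d(\overline{A})d(\underline{\overline{\overline{A}}})$, i.e.\ the identity (ii) for the shorter chain $\overline{A}$, which equals $1$ by the inductive hypothesis.

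For (iii), coprimality is immediate from (ii): any common divisor of $d(A)$ and $d(\overline{A})$ divides $d(\overline{A})d(\underline{A})-d(A)d(\underline{\overline{A}})=1$ when $r>1$, while the case $r=1$ is trivial since $d(\overline{A})=d(\emptyset)=1$. The inequality $d(A)>d(\overline{A})>0$ is another induction on $r$: for $r=1$ it reads $a_1>1>0$, which holds as $a_1\ge 2$; for $r\ge 2$, admissibility of $\overline{A}$ gives $d(\overline{A})>d(\overline{\overline{A}})>0$ by the inductive hypothesis, and then (i) together with $a_1\ge 2$ yields $d(A)=a_1d(\overline{A})-d(\overline{\overline{A}})\ge 2d(\overline{A})-d(\overline{\overline{A}})>d(\overline{A})>0$.

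The only step demanding genuine attention is the bookkeeping inside (ii): one must keep the ``delete first vertex'' and ``delete last vertex'' operations, and the resulting index shifts, straight when reducing to the shorter chain. There is no essential difficulty, and (i) and (iii) are routine once (ii) is available. One could alternatively bypass the matrix picture and simply take the three-term recursion of (i) as the definition of $d$, deriving (ii) and (iii) purely combinatorially; the argument is the same.
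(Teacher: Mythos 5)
Your proof is correct. The paper itself gives no argument for this lemma --- it is quoted verbatim from Fujita (\cite[Lemma 3.6]{fu}) --- and your continuant computation (Laplace expansion along the first and last rows for (i), the two-term cancellation reducing (ii) to the shorter chain $\overline{A}$, and the inductions for (iii)) is the standard self-contained proof of exactly these facts, so there is nothing in the paper to compare it against beyond the citation.
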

%
%%%%%%%%%%%%%%%%%%%%%%%%%%%%%%%%%%%%%%%%%%%%%%%%%%%%%%%%%%%%%%%%%%%%%%%%%%%%%%%
%\subsection{The inductance of a linear chain}
%%%%%%%%%%%%%%%%%%%%%%%%%%%%%%%%%%%%%%%%%%%%%%%%%%%%%%%%%%%%%%%%%%%%%%%%%%%%%%%

Let $A=[a_1,\ldots,a_r]$ be an admissible linear chain.
The rational number $e(A):=d(\overline{A})/d(A)$
is called the \emph{inductance} of $A$.
By \cite[Corollary 3.8]{fu}, the function
$e$ defines a one-to-one correspondence between the
set of all the admissible linear chains and the set of rational numbers
in the interval $(0,1)$.
For a given admissible linear chain $A$,
the admissible linear chain $A^{\ast}:=e^{-1}(1-e(\TP{A}))$ is called
the \emph{adjoint} of $A$ (\cite[3.9]{fu}).
Admissible linear chains and their adjoints have the following properties
(\cite[Corollary 3.7, Proposition 4.7]{fu}).
\begin{lemma}\label{lem:indf}
Let $A$ and $B$ be admissible linear chains.
\begin{enumerate}
\item[\textnormal{(i)}]
If $e(A)+e(B)=1$, then $d(A)=d(B)$ and $e(\TP{A})+e(\TP{B})=1$.
\item[\textnormal{(ii)}]
We have $A^{\ast\ast}=A$, $\TP{(A^{\ast})}=(\TP{A})^{\ast}$ and
$d(A)=d(A^{\ast})=d(\overline{A^{\ast}})+d(\underline{A})$.
\item[\textnormal{(iii)}]
The linear chain $[A,1,B]$ shrinks to $[0]$
if and only if $A=B^{\ast}$.
\end{enumerate}
\end{lemma}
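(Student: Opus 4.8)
The plan is to deduce all three statements from the discriminant identities of Lemma~\ref{lem:det1} and the bijectivity of $e$ recorded above, the organising principle being that an admissible chain $A$ is determined by the pair $(d(A),d(\overline{A}))$ and that both $A\mapsto\TP{A}$ and $A\mapsto A^{\ast}$ act transparently on such pairs. For admissible $A$ set $n=d(A)$, $q=d(\overline{A})$; by Lemma~\ref{lem:det1}(iii), $\gcd(n,q)=1$ and $0<q<n$, so $e(A)=q/n$ in lowest terms. The computation I would isolate first is that $e(\TP{A})=d(\underline{A})/d(A)$: since $\overline{\TP{A}}=\TP{(\underline{A})}$, Lemma~\ref{lem:det1}(i) gives $d(\TP{A})=d(A)=n$ and $d(\overline{\TP{A}})=d(\underline{A})$, while Lemma~\ref{lem:det1}(ii) shows $d(\overline{A})\,d(\underline{A})\equiv 1\pmod n$; thus transposition fixes the discriminant and inverts the numerator of the inductance modulo it.

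With this, (i) is short: $e(A)+e(B)=1$ rewrites as $d(\overline{A})\,d(B)=d(A)\big(d(B)-d(\overline{B})\big)$, and coprimality of $d(A)$ and $d(\overline{A})$ forces $d(A)\mid d(B)$, hence (by symmetry, both being positive) $d(A)=d(B)=:n$; then $d(\overline{A})+d(\overline{B})=n$, so by the congruence above $d(\underline{A})+d(\underline{B})\equiv 0\pmod n$, and since $0<d(\underline{A}),d(\underline{B})<n$ (Lemma~\ref{lem:det1}(iii) for $\TP{A},\TP{B}$) this sum is exactly $n$, i.e.\ $e(\TP{A})+e(\TP{B})=1$. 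Part (ii) follows formally: $e(A^{\ast})+e(\TP{A})=1$ by definition, so (i) gives $d(A^{\ast})=d(A)$ and $e(\TP{(A^{\ast})})+e(A)=1$; the defining relation of $(\TP{A})^{\ast}$ is the same, so injectivity of $e$ yields $\TP{(A^{\ast})}=(\TP{A})^{\ast}$; substituting $e(\TP{(A^{\ast})})=1-e(A)$ into the definition of $A^{\ast\ast}$ gives $A^{\ast\ast}=A$; and dividing $e(A^{\ast})+e(\TP{A})=1$ by the common value $d(A)$ gives $d(\overline{A^{\ast}})+d(\underline{A})=d(A)$.

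For (iii) I would first record, by iterating Lemma~\ref{lem:det1}(i) (equivalently, the concatenation rule $d([X,Y])=d(X)d(Y)-d(\underline{X})d(\overline{Y})$), the identity $d([A,1,B])=\big(d(A)-d(\underline{A})\big)d(B)-d(A)\,d(\overline{B})$, which vanishes precisely when $e(B)=1-e(\TP{A})$, i.e.\ when $B=A^{\ast}$. The ``only if'' direction is then immediate: a blow-down of a linear chain contracts a $(-1)$-curve, and a two-line determinant check (separately for an interior, terminal, or isolated $(-1)$-curve, using Lemma~\ref{lem:det1}(i)) shows this leaves $d$ unchanged; hence if $[A,1,B]$ shrinks to $[0]$ then $d([A,1,B])=d([0])=0$, forcing $B=A^{\ast}$. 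For the ``if'' direction I would induct on $\NV(A)+\NV(B)$: when $B=A^{\ast}$ the chain $[A,1,B]$ has discriminant $0$, and since an admissible chain has positive discriminant this prevents the two entries flanking the central $1$ from both exceeding $2$; contracting the central $(-1)$-curve (and then any $(-1)$-curve it creates) therefore produces a strictly shorter chain, still of discriminant $0$, to which the inductive hypothesis applies after identifying it as a configuration $[A',1,A'^{\ast}]$, the base case being $[2,1,2]\to[1,1]\to[0]$.

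The hard part will be the ``if'' half of (iii). Vanishing of the discriminant does not by itself guarantee that the contraction process terminates at $[0]$ rather than at $\emptyset$ or at a chain with no $(-1)$-curve; closing the induction requires tracking how the $(-1)$-curves migrate along the chain as entries are successively lowered, and verifying that at each stage the shortened chain really is again of the form $[A',1,A'^{\ast}]$. Everything else is an application of the identities in Lemma~\ref{lem:det1} together with elementary modular arithmetic.
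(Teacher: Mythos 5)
The paper offers no proof of this lemma---it is quoted verbatim from Fujita (\cite[Corollary 3.7, Proposition 4.7]{fu})---so your argument has to stand on its own. Parts (i) and (ii), and the ``only if'' half of (iii), do stand: the observations that $e(\TP{A})=d(\underline{A})/d(A)$, that $d(\overline{A})\,d(\underline{A})\equiv 1\pmod{d(A)}$ by Lemma~\ref{lem:det1}(ii), the expansion $d([A,1,B])=(d(A)-d(\underline{A}))d(B)-d(A)\,d(\overline{B})$, and the invariance of $d$ under contraction of a $(-1)$-curve inside a linear chain are all routine consequences of Lemma~\ref{lem:det1}, and the deductions you draw from them (divisibility forcing $d(A)=d(B)$, the modular inversion giving $e(\TP{A})+e(\TP{B})=1$, the purely formal derivation of (ii) from (i) and the bijectivity of $e$, and the equivalence $d([A,1,B])=0\Leftrightarrow B=A^{\ast}$) are correct. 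This is a clean, self-contained arithmetic route that bypasses Fujita's structure theory of adjoints entirely.

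The gap is exactly where you flag it: the ``if'' half of (iii). Your observation that $d([A,1,B])=0$ forbids both flanking entries from exceeding $2$ is correct (writing $d(A)=d(B)=n$ one gets $d(\underline{A})=d(B)-d(\overline{B})$ and $d(\overline{B})=d(A)-d(\underline{A})$; if $a_r\ge 3$ and $b_1\ge 3$ then Lemma~\ref{lem:det1}(i) gives $d(\underline{A})>d(\overline{B})$ and $d(\overline{B})>d(\underline{A})$ simultaneously), so a $(-1)$- or $0$-curve is always available. But you never verify that after one contraction the configuration is again of the shape $[A',1,A'^{\ast}]$ with $A'$ admissible, and in general it is not: when $a_r=b_1=2$ one lands on $[\ldots,1,1,\ldots]$, and when $\NV(A)=1$ one lands on a chain with a terminal $1$. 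Two standard ways to close this: either first establish the explicit formula $A^{\ast}=\TW{a_r-1}\TA\cdots\TA\TW{a_1-1}$ of Lemma~\ref{lem:adj}(ii) directly from the definition of $e$ (not from the present lemma, to avoid circularity), after which $[A,1,A^{\ast}]$ admits an explicit contraction by peeling the block $a_r$ of $A$ against the leading $\TW{a_r-1}$ of $A^{\ast}$ and inducting on $\NV(A)$; or run your induction on the total number of vertices but treat the degenerate shapes $[\ldots,1,1,\ldots]$ and $[1,\ldots]$ as separate cases, using that a nonempty admissible chain has $d\ge 2$, so a linear chain of discriminant $0$ must keep producing curves of self-intersection $\ge -1$ until it reaches $[0]$. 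Either way genuine bookkeeping remains; as written, the ``if'' direction is a plan, not a proof.
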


For integers $m$, $n$ with $n\ge 0$, we define 
$[m_n]=[\overbrace{m,\ldots,m}^n]$, $\TW{n}=[2_n]$.
For non-empty linear chains $A=[a_1,\ldots,a_r]$, $B=[b_1,\ldots,b_s]$,
we write
$A\TA B=[\underline{A},a_r+b_1-1,\overline{B}]$,
$A^{\ast n}=\overbrace{A\TA\cdots\TA A}^n$, where $n\ge 1$.
We remark that $(A\TA B)\TA C=A\TA(B\TA C)$
for non-empty linear chains $A$, $B$ and $C$.
By using Lemma~\ref{lem:det1} and Lemma~\ref{lem:indf},
we can show the following lemma.
\begin{lemma}\label{lem:adj}
Let $A=[a_1,\ldots,a_r]$ be an admissible linear chain.
\begin{enumerate}
\item[(i)]
For a positive integer $n$, we have $[A,n+1]^{\ast}=\TW{n}\TA A^{\ast}$.
\item[(ii)]
We have $A^{\ast}=\TW{a_r-1}\TA\cdots\TA\TW{a_1-1}$.
\item[(iii)]
If there exist positive integers $m$, $n$ such that
$[A,m+1]=[n+1,A]$
(resp.~$A\TA\TW{m}=\TW{n}\TA A$),
then $m=n$,
$a_1=\cdots=a_r=n+1$
(resp.~$A=\TW{n}^{\ast\NV(A^{\ast})}$).
\end{enumerate}
\end{lemma}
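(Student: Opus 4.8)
The plan is to derive (i) from the shrinking criterion for adjoints in Lemma~\ref{lem:indf}(iii), to obtain (ii) from (i) by induction on $\NV(A)$, and to settle (iii) by a direct inspection of the two sides of the displayed equalities, invoking (ii) only at the final step. For (i): $[A,n+1]$ is admissible (as $n\ge 1$), and $\TW{n}\TA A^{\ast}$ --- obtained from the admissible chain $A^{\ast}$ by prepending $[2_{n-1}]$ and raising its first entry by $1$ --- is admissible as well. Since $(\ )^{\ast}$ is an involution by Lemma~\ref{lem:indf}(ii), it is enough to prove $[A,n+1]=(\TW{n}\TA A^{\ast})^{\ast}$, which by Lemma~\ref{lem:indf}(iii) means that $[A,\,n+1,\,1,\,\TW{n}\TA A^{\ast}]$ shrinks to $[0]$. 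Writing $A^{\ast}=[a_1^{\ast},\ldots,a_s^{\ast}]$, this chain is $[A,\,n+1,\,1,\,2_{n-1},\,a_1^{\ast}+1,\,a_2^{\ast},\ldots,a_s^{\ast}]$, and $n$ successive contractions of $(-1)$-vertices (each lowering by one the entry immediately to the left of the current $(-1)$-vertex and deleting one $2$ immediately to its right) turn it into $[A,1,A^{\ast}]$; the latter shrinks to $[0]$ by Lemma~\ref{lem:indf}(iii) applied with $A=(A^{\ast})^{\ast}$.

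For (ii): when $\NV(A)=1$, say $A=[a_1]$, one has $e([a_1])=e(\TP{[a_1]})=1/a_1$ and $e(\TW{a_1-1})=(a_1-1)/a_1$, so $[a_1]^{\ast}=\TW{a_1-1}$ by the bijectivity of the inductance. When $\NV(A)\ge 2$, apply (i) with $\underline A$ in place of $A$ and $n=a_r-1\ge 1$ to get $A^{\ast}=[\underline A,a_r]^{\ast}=\TW{a_r-1}\TA(\underline A)^{\ast}$; the induction hypothesis and the associativity of $\TA$ then finish the proof. The first assertion of (iii) is immediate: equating $[a_1,\ldots,a_r,m+1]$ with $[n+1,a_1,\ldots,a_r]$ entry by entry forces $a_1=n+1$, $a_i=a_{i-1}$ for $2\le i\le r$ and $a_r=m+1$, hence $a_1=\cdots=a_r=n+1$ and $m=n$.

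For the second assertion of (iii), write $A\TA\TW{m}=[\underline A,a_r+1,2_{m-1}]$ and $\TW{n}\TA A=[2_{n-1},a_1+1,\overline A]$; a length comparison gives $m=n$. If $n=1$, the equality reads $[\underline A,a_r+1]=[a_1+1,\overline A]$, whose leading entries force $r=1$ (else $a_1=a_1+1$), so $A=[a_1]$, $A^{\ast}=\TW{a_1-1}$ by (ii), and $\TW{1}^{\ast(a_1-1)}=[a_1]=A$. If $n\ge 2$, equating the two length-$(r+n-1)$ chains entry by entry yields, successively, $r\ge n$ (else $a_r+1=2$), then $a_1=\cdots=a_{n-1}=2$, and then either $n=r$ (whence $a_r=2$ and $A=\TW{n}$) or $n<r$, $a_n=3$ and
\[
a_j=a_{j-n+1}\ \ (n+1\le j\le r-1),\qquad a_{r-n+1}=a_r+1,\qquad a_{r-n+2}=\cdots=a_r=2.
\]
In either case $A=\TW{n}^{\ast t}$ for an integer $t\ge 1$ satisfying $r=t(n-1)+1$, namely the chain having $t-1$ entries equal to $3$ and all other entries equal to $2$. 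Finally, by (ii) $A^{\ast}=\TW{a_r-1}\TA\cdots\TA\TW{a_1-1}$; since $\NV(B\TA C)=\NV(B)+\NV(C)-1$, one computes $\NV(A^{\ast})=\sum_{i=1}^{r}(a_i-1)-(r-1)=t$, so that $A=\TW{n}^{\ast\NV(A^{\ast})}$.

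The only genuinely laborious point is the second assertion of (iii): extracting the exact form $A=\TW{n}^{\ast t}$ from the index relations above, and then verifying that the vertex count $\NV(A^{\ast})$ of the adjoint equals the exponent $t$. Everything else is routine once the shrinking criterion of Lemma~\ref{lem:indf}(iii) has been exploited in (i).
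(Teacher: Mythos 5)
Your proof is correct, and it follows exactly the route the paper indicates (the paper states only that the lemma ``can be shown by using Lemma~\ref{lem:det1} and Lemma~\ref{lem:indf}'' and omits the argument): part (i) via the shrinking criterion of Lemma~\ref{lem:indf}(iii) together with the involution $A^{\ast\ast}=A$, part (ii) by induction from (i), and part (iii) by direct comparison of entries. The only point you gloss over is in the second half of (iii) for $n<r$: the index relations you list are consistent only when $r\equiv 1\pmod{n-1}$ (otherwise the periodicity forces $a_{r-n+1}=2$ against $a_{r-n+1}=3$, so no such $A$ exists and the implication is vacuous), but this does not affect the validity of the conclusion, and your count $\NV(A^{\ast})=\sum_i(a_i-1)-(r-1)=t$ is correct.
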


We will use the following lemma
(\cite[Corollary 8]{to:orev}).
\begin{lemma}\label{lem:bu}
Let $a$ be a positive integer and $A$ an admissible linear chain.
Let $B$ be a linear chain which is empty or admissible.
Assume that
a composite $\pi$ of blow-downs contracts
$[A,1,B]$ to $[a]$
and that $[a]$ is the image of $A$ under $\pi$.
\begin{enumerate}
\item[(i)]
The linear chain $[a]$ is the image of the first curve of $A$.
There exits a positive integer $n$ such that
$A^{\ast}=[B,n+1,\TW{a-1}]$.
Moreover,
$A=[a]\TA\TW{n}\TA B^{\ast}$
if $B\ne\emptyset$.
\item[(ii)]
The first $n$ blow-ups of $\pi$ are sprouting and
the remaining ones are subdivisional with respect to
$[a]$ or its preimages.
The composite of the subdivisional blow-ups contracts
$[A,1,B]$ to $[[a]\TA\TW{n},1]$.
\item[(iii)]
The exceptional curve of each blow-up of $\pi$
is a unique ($-1$)-curve in the preimage of $[a]$.
\end{enumerate}
Conversely,
$[[a]\TA\TW{n}\TA B^{\ast},1,B]$ shrinks to $[a]$
for given positive integers $a$, $n$ and an admissible linear chain $B$.
\end{lemma}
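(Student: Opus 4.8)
The plan is to prove the forward implications by induction on $\NV(A)+\NV(B)$, the number of blow-downs composing $\pi$, and to prove the ``conversely'' part by an explicit construction; Lemma~\ref{lem:adj} will carry the adjoint bookkeeping and Lemma~\ref{lem:indf}(iii) the construction. First I would observe that, since $A$ is admissible and $B$ is admissible or empty, the central curve is the unique $(-1)$-curve of $[A,1,B]$, so the first blow-down of $\pi$ must contract it, producing $[\underline A,a_r-1,b_1-1,\overline B]$ (or $[\underline A,a_r-1]$ when $B=\emptyset$); this blow-down is sprouting when $B=\emptyset$ and subdivisional when $B\ne\emptyset$. If $B=\emptyset$, peeling off the successive $(-1)$-curves forces $A=[a]\TA\TW{n}$ with $n=\NV(A)$, hence $A^{\ast}=[n+1,\TW{a-1}]$, and every blow-up of $\pi$ is sprouting. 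If $B\ne\emptyset$, I would split on the pair $(a_r,b_1)$: if both are $\ge 3$, the new chain is admissible of length $\ge 2$ and admits no further blow-down, a contradiction; if both equal $2$, the next blow-down necessarily creates a curve of non-negative self-intersection, which can never be contracted, again a contradiction. Hence exactly one of $a_r=2$, $b_1=2$ holds, and in each case the new chain has the form $[A',1,B']$ with $A'$ admissible, $B'$ admissible or empty, $[a]$ still the image of the first curve of $A'$, and $\NV(A')+\NV(B')$ smaller by one.

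By the inductive hypothesis there is a positive integer $n$ with $(A')^{\ast}=[B',n+1,\TW{a-1}]$, the first $n$ blow-ups of the shorter composite being sprouting and the rest subdivisional, contracting $[A',1,B']$ to $[[a]\TA\TW{n},1]$. When $b_1=2$, one has $B=[2,B']$ and $A=A'\TA\TW{1}$ (with $a_r\ge 3$), so $A^{\ast}=[2,(A')^{\ast}]=[B,n+1,\TW{a-1}]$ by Lemma~\ref{lem:adj}(ii); when $a_r=2$, one has $B'=[b_1-1,\overline B]$ and $A=[A',2]$, so $A^{\ast}=[A',2]^{\ast}=\TW{1}\TA(A')^{\ast}=[2]\TA[b_1-1,\overline B,n+1,\TW{a-1}]=[B,n+1,\TW{a-1}]$ by Lemma~\ref{lem:adj}(i). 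Either way the identity for $A^{\ast}$ holds with the same $n$, and then $A=(A^{\ast})^{\ast}=[a]\TA\TW{n}\TA B^{\ast}$ follows at once from Lemma~\ref{lem:adj}(ii), using $B^{\ast}=\TW{b_t-1}\TA\cdots\TA\TW{b_1-1}$. Reinstating the removed subdivisional blow-down keeps the first $n$ blow-ups sprouting and leaves the remaining (subdivisional) blow-ups contracting $[A,1,B]$ to $[[a]\TA\TW{n},1]$, which gives (ii); and since at every stage exactly one $(-1)$-curve lies over $[a]$, namely the last exceptional curve, (iii) follows as well.

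For the ``conversely'' part, if $B=\emptyset$ then $[[a]\TA\TW{n},1]=[a+1,\TW{n-1},1]$ shrinks to $[a]$ by $n$ successive contractions of the right-hand $(-1)$-curve. If $B\ne\emptyset$, Lemma~\ref{lem:indf}(iii) gives that $[B^{\ast},1,B]$ shrinks to $[0]$; choosing this contraction so that the first curve of $B^{\ast}$ is the one mapping onto $[0]$, the very same blow-downs contract $[b_1^{\ast}+c,\overline{B^{\ast}},1,B]$ to $[c]$ for every integer $c\ge 0$, because only the self-intersection of the surviving endpoint changes. For $n\ge 2$ this makes the tail $[b_1^{\ast}+1,\overline{B^{\ast}},1,B]$ of $[[a]\TA\TW{n}\TA B^{\ast},1,B]=[a+1,\TW{n-2},b_1^{\ast}+1,\overline{B^{\ast}},1,B]$ collapse to a single $(-1)$-curve, after which $n-1$ further contractions give $[a]$; for $n=1$ the whole chain equals $[a+b_1^{\ast},\overline{B^{\ast}},1,B]$, which collapses to $[a]$ directly. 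I expect the main obstacle to be the bookkeeping in the forward direction --- keeping the adjoint identities aligned through the case split and handling the degenerate sub-cases ($r=1$, $\NV(B)=1$, $a=1$, $B'=\emptyset$) --- together with the need, in the converse, to justify that the contraction furnished by Lemma~\ref{lem:indf}(iii) can be chosen with all its centers on the first curve of $B^{\ast}$ and its preimages, so that precisely the curve adjacent to the prefix survives.
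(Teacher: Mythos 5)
The paper does not prove this lemma at all --- it is imported verbatim from \cite{to:orev} (Corollary 8 there) --- so there is no in-text argument to compare yours against; I can only assess your proposal on its own terms. The forward direction is essentially right: the contraction of $[A,1,B]$ is forced at every step because the middle curve is the unique $(-1)$-curve, your dichotomy ($a_r=2$ xor $b_1=2$) is correct, and I checked the adjoint bookkeeping --- $(A'\TA\TW{1})^{\ast}=[2,(A')^{\ast}]$ and $[A',2]^{\ast}=\TW{1}\TA (A')^{\ast}$ do splice together to give $A^{\ast}=[B,n+1,\TW{a-1}]$ in both branches, and $A^{\ast\ast}=[a]\TA\TW{n}\TA B^{\ast}$ follows from Lemma~\ref{lem:adj}(ii) as you say. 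Two small points you flag but should actually write out: in the branch $a_r=2$ you need $\underline{A}\ne\emptyset$ for $A'$ to be admissible (if $A=[2]$ and $b_1\ge3$ the next forced contraction kills $A$, contradicting the hypothesis that $[a]$ is the image of $A$, so the case is vacuous); and to invoke the inductive hypothesis you must check that $[a]$ is still the image of $A'$ under the shorter composite, which requires ruling out that the new middle $(-1)$-curve survives (it cannot, since all its neighbours are eventually contracted, pushing its self-intersection to $\ge 0$).

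The one genuine gap is in the converse. Your weight-shifting trick is fine as far as it goes, but it rests entirely on the claim that the contraction of $[B^{\ast},1,B]$ to $[0]$ can be chosen so that the \emph{first} curve of $B^{\ast}$ is the surviving component; Lemma~\ref{lem:indf}(iii) asserts only that the chain shrinks to $[0]$ and says nothing about which component survives. You name this as an obstacle but do not close it, and without it the converse is unproven. It is true, and provable by the same induction you already use: at every stage $[B^{\ast},1,B]$ has a unique $(-1)$-curve until one reaches $[2,1,2]\to[1,1]$, whose two components are the images of the first curve of $B^{\ast}$ and the last curve of $B$, so the survivor may be chosen freely at the last step only. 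Alternatively, and more cleanly, you can bypass the sub-claim altogether by running the converse as a direct induction on $\NV(B)$: contracting the unique $(-1)$-curve of $[[a]\TA\TW{n}\TA B^{\ast},1,B]$ yields $[[a]\TA\TW{n}\TA(B')^{\ast},1,B']$ with $B'=[b_1-1,\overline{B}]$ or $B'=\overline{B}$ according as $b_1\ge3$ or $b_1=2$ (using $B^{\ast}=\TW{b_s-1}\TA\cdots\TA\TW{b_1-1}$ to identify the new last entry), bottoming out at $[[a]\TA\TW{n},1]$, which collapses to $[a]$ from the right. Either repair completes your argument.
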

%%%%%%%%%%%%%%%%%%%%%%%%%%%%%%%%%%%%%%%%%%%%%%%%%%%%%%%%%%%%%%%%%%%%%%%%%%%%%%%
\subsection{%
Resolution of a cusp}\label{sec:cres}
%%%%%%%%%%%%%%%%%%%%%%%%%%%%%%%%%%%%%%%%%%%%%%%%%%%%%%%%%%%%%%%%%%%%%%%%%%%%%%%
Let $(C,P)$ be a curve germ on a smooth surface $V$.
Suppose that $(C,P)$ is a cusp.
Let $\sigma:V'\rightarrow V$ be the minimal embedded resolution
of $(C,P)$.
That is,
$\sigma$ is the composite of the shortest sequence
of blow-ups such that
the strict transform $C'$ of $C$ intersects $\sigma^{-1}(P)$ transversally.
Let
\(
V'=V_n\stackrel{\sigma_{n-1}}{\longrightarrow}V_{n-1}
\longrightarrow\cdots\longrightarrow
V_2\stackrel{\sigma_1}{\longrightarrow}
V_1\stackrel{\sigma_0}{\longrightarrow}V_0=V
\)
be the blow-ups of $\sigma$.
The following lemma follows from the assumptions that
$(C,P)$ is a cusp and $\sigma$ is minimal.
\begin{lemma}\label{lem:cres0}
For $i\ge1$, the strict transform of $C$ on $V_i$
intersects $(\sigma_0\circ\cdots\circ\sigma_{i-1})^{-1}(P)$
in one point, which is on the exceptional curve of $\sigma_{i-1}$.
The point of intersection is the center of $\sigma_{i}$ if $i<n$.
\end{lemma}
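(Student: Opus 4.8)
The plan is to prove the three assertions simultaneously by induction on $i$, strengthening the statement to the following: the strict transform $C_i$ of $C$ on $V_i$ is irreducible, it meets the reduced total transform $E^{(i)}:=(\sigma_0\circ\cdots\circ\sigma_{i-1})^{-1}(P)$ in exactly one point $P_i$, and $P_i$ lies on the exceptional curve $E_i$ of $\sigma_{i-1}$. The last assertion of the lemma is then just the identification, carried out inside the inductive step, of $P_i$ with the center of $\sigma_i$ when $i<n$.

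Before starting the induction I would record three elementary facts. (1) The strict transform of an irreducible curve germ under a blow-up is again an irreducible curve germ, since the blow-up is an isomorphism away from its center; hence every $C_i$ is irreducible. (2) An irreducible plane curve germ has a single tangent line, namely its reduced tangent cone, so after a blow-up its strict transform meets the new exceptional curve in exactly one point. (3) A reduced divisor obtained from a smooth rational curve by successive blow-ups at points is automatically an SNC-divisor: at each stage the configuration remains a tree of smooth rational curves meeting transversally with no triple point, whether one blows up a smooth point or a node. A consequence of (3) is that, if $C_i$ meets $E^{(i)}$ only at $P_i$, then $C_i+E^{(i)}$ fails to be SNC at some point of the total transform of $P$ if and only if it fails to be SNC at $P_i$, i.e.\ if and only if $C_i$ is singular at $P_i$, or $C_i$ is tangent to $E^{(i)}$ there, or $P_i$ is a node of $E^{(i)}$.

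The base case $i=1$ is immediate from (2): $E^{(1)}=E_1=\sigma_0^{-1}(P)$ and $C_1$ meets $E_1$ in exactly one point $P_1\in E_1$. For the inductive step, assume the strengthened hypothesis for some $i$ with $1\le i<n$. Since $\sigma$ requires further blow-ups, $C_i+E^{(i)}$ is not yet SNC near the total transform of $P$; by the consequence of (3) and the induction hypothesis, $P_i$ is the \emph{only} bad point lying over $P$, and minimality of $\sigma$ forces the center of $\sigma_i$ to be $P_i$ (a center off the total transform of $P$, or one at which $C_i+E^{(i)}$ is already SNC, could be deleted together with all blow-ups centered over it, contradicting minimality). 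This gives the identification of $P_i$ with the center of $\sigma_i$. After blowing up $P_i$, fact (2) shows $C_{i+1}$ meets $E_{i+1}=\sigma_i^{-1}(P_i)$ in a single point $P_{i+1}\in E_{i+1}$. Finally $C_{i+1}$ meets no further component of $E^{(i+1)}$: the strict transform on $V_{i+1}$ of a component of $E^{(i)}$ can meet $C_{i+1}$ only over $C_i\cap E^{(i)}=\{P_i\}$, hence only at its intersection point with $E_{i+1}$, which lies on $E_{i+1}$ and so must equal $P_{i+1}$. Thus $C_{i+1}\cap E^{(i+1)}=\{P_{i+1}\}$ with $P_{i+1}\in E_{i+1}$, and the induction is complete.

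I expect the main difficulty to be the bookkeeping in the inductive step rather than any deep input: one must use unibranchness of the cusp twice---to keep the intersection with the newest exceptional curve a single point (fact (2)), and to propagate the statement that $C_i$ meets the \emph{entire} total transform of $P$ in one point---and use minimality of $\sigma$ exactly once, to recognize that single point as the center of the next blow-up. The subtlest verification is that $C_{i+1}$ does not acquire a second intersection point with the older exceptional components, which is precisely where the hypothesis $C_i\cap E^{(i)}=\{P_i\}$ is essential.
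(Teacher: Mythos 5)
Your proof is correct. The paper gives no argument for this lemma at all --- it simply asserts that it ``follows from the assumptions that $(C,P)$ is a cusp and $\sigma$ is minimal'' --- and your induction supplies exactly the standard details using precisely those two inputs: unibranchness to keep the intersection with the total transform a single point on the newest exceptional curve, and minimality to identify that point with the next center.
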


Let $D_0$ denote the exceptional curve of the last blow-up of $\sigma$.
\begin{lemma}[{\cite[Lemma 11]{to:orev}}]\label{lem:cres}
The following assertions hold.
\begin{enumerate}
\item[(i)]
The dual graph of $\sigma^{-1}(C)$ has the following shape,
where
$g\ge1$ and
$A_1$ contains the exceptional curve of $\sigma_0$ by definition.
\begin{center}
\includegraphics{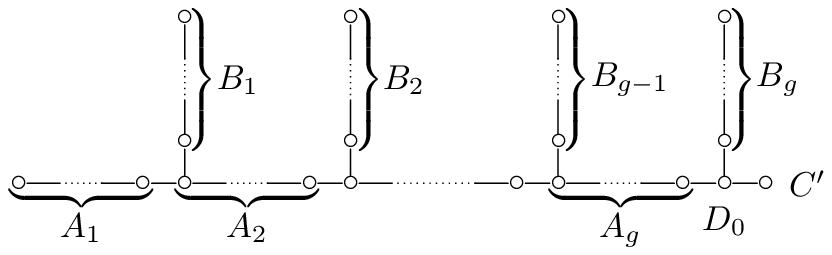}
\end{center}
We number the irreducible components $A_{i,1}, A_{i,2},\ldots$ of $A_i$
(resp.~$B_{i,1}$, $B_{i,2},\ldots$ of $B_i$)
from the left-hand side to the right
(resp.~the bottom to the top) in the above figure.
With these directions and the weights $A_{i,1}^2,A_{i,2}^2,\ldots$,
$B_{i,1}^2,B_{i,2}^2,\ldots$,
we regard $A_i,B_i$ as linear chains.
\item[(ii)]
The morphism $\sigma$ can be written as
$\sigma=\sigma_0\circ\rho_1'\circ\rho_1''\circ\cdots\circ\rho_g'\circ\rho_g''$,
where each $\rho_i'$ (resp.~$\rho_i''$) consists of
sprouting (resp.~subdivisional) blow-ups
of $\sigma$ with respect to preimages of $P$.
\item[(iii)]
The morphisms $\rho_i:=\rho_i'\circ\rho_i''$
have the following properties.
\begin{enumerate}
\item[(a)]
For $j<i$,
$\rho_i$ does not change the linear chains $A_j,B_j$.
\item[(b)]
For each $i$,
$\rho_{i}\circ\dots\circ\rho_{g}$ maps $A_{i,1}$ to a ($-1$)-curve.
\item[(c)]
$\rho_g$ contracts the linear chain $A_g+D_0+B_g$ to
the ($-1$)-curve $\rho_g(A_{g,1})$.
For $i<g$,
$\rho_{i}$ contracts
the linear chain $(\rho_{i+1}\circ\dots\circ\rho_{g})(A_{i}+A_{i+1,1}+B_{i})$
to the ($-1$)-curve $(\rho_{i}\circ\dots\circ\rho_{g})(A_{i,1})$.
\end{enumerate}
\end{enumerate}
\end{lemma}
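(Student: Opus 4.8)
The plan is to prove (i)--(iii) simultaneously by induction on the length $n$ of $\sigma$, using Lemma~\ref{lem:cres0} at every step to keep rigid control of where the strict transform of $C$ meets the successive total transforms of $P$. For each blow-up $\sigma_i$ one records whether it is sprouting or subdivisional with respect to the preimage of $P$ on $V_i$; the proof amounts to showing that this string of labels has the shape predicted by (ii) and that the exceptional curves thereby organise into the chains of (i).

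I would begin with the opening moves. The blow-up $\sigma_0$ is subdivisional by convention and produces the curve $E_0$, which will be $A_{1,1}$. Since $(C,P)$ is a cusp we have $\mult_P C\ge 2$, so by Lemma~\ref{lem:cres0} the strict transform $C_1$ meets $E_0$ at one point with intersection number $\ge 2$; hence a further blow-up $\sigma_1$ is required, and its centre is a smooth point of the divisor $E_0$, so $\sigma_1$ is sprouting. One then performs every blow-up that is sprouting with respect to the current preimage of $P$; this block is $\rho_1'$. A short case analysis of the possible local positions of the strict transform at the end of $\rho_1'$ --- it is either singular there, or smooth and tangent to an exceptional curve --- shows, using the minimality of $\sigma$ and Lemma~\ref{lem:cres0} again, that its single point of contact with the current divisor is then a node, so the next blow-up exists and is subdivisional; one runs the ensuing maximal subdivisional block $\rho_1''$. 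The geometric heart of the argument is to show that after $\rho_1 = \rho_1'\circ\rho_1''$ exactly one of two things has happened. Either the strict transform already meets the total transform of $P$ transversally and in SNC position: this is the case $g=1$, handled by a direct analysis of the sprouting run (essentially the Euclidean algorithm attached to the single characteristic pair of $(C,P)$), which yields the chain $A_1+D_0+B_1$ with $C'$ meeting $D_0$. Or the strict transform is again a unibranch germ meeting the total transform in a single point that lies on a newly created exceptional curve and is a node of the divisor, and the chains $A_1$ and $B_1$ are now frozen, in the sense that no later centre of blow-up will lie on $A_1+B_1$.

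In the latter case the inductive step applies the lemma to the blow-ups that remain, which I claim constitute the minimal embedded resolution of the unibranch germ reached after $\rho_1$: one must check that the divisor $A_1+B_1$ already in place neither forces an extra blow-up nor renders one superfluous, and this follows from the ``frozen'' statement together with the fact that after $\rho_1''$ the strict transform is transversal to $A_1+B_1$ except at the one point that is about to be blown up. Grafting the dual graph delivered by the induction hypothesis onto $A_1+B_1$ gives the shape in (i), and reading the blocks of the blow-up sequence off that description gives the decomposition in (ii). Part (iii)(a) is the ``frozen'' statement applied at every stage. For (iii)(b) and (iii)(c) one reverses the blow-ups of $\rho_i$ one stage at a time and invokes the behaviour of a linear chain under successive blow-downs --- the mechanism that already underlies Lemma~\ref{lem:bu} --- to identify $A_{i,1}$ as the unique component of the relevant chain that survives $\rho_i,\ldots,\rho_g$ as a $(-1)$-curve, onto whose image the remainder of the chain --- namely $A_i+D_0+B_i$ for $i=g$ and $(\rho_{i+1}\circ\cdots\circ\rho_g)(A_i+A_{i+1,1}+B_i)$ for $i<g$ --- is contracted.

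The part I expect to be the real work is the bookkeeping in the inductive step: proving that the germ reached after $\rho_1$ is a genuine cusp whose minimal embedded resolution is exactly the tail of $\sigma$, so that the stages really do alternate and each $\rho_i'$ and each $\rho_i''$ is non-empty, and pinning down precisely how the twig $B_i$ and, for $i=g$, the curve $D_0$ are attached to the spine $A_1,\ldots,A_g$. This is the point at which the fine combinatorics of the multiplicity sequence $\ms_P(C)$ --- equivalently the Enriques proximity inequalities among the points infinitely near $P$ --- has to be brought in: it is these relations that force the sprouting and subdivisional blow-ups to cluster into the claimed blocks and that determine the weights attached to the chains.
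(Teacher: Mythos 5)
This lemma is not proved in the paper at all: it is imported verbatim from \cite[Lemma 11]{to:orev}, so there is no in-text argument to measure your attempt against. Judged on its own terms, your outline is the standard (and, as far as I can tell, the intended) argument: $\sigma_0$ is subdivisional by the paper's convention, Lemma~\ref{lem:cres0} forces the strict transform to meet the preimage of $P$ in a single point on the newest exceptional curve at every stage, the first subsequent blow-up is sprouting, maximal sprouting and subdivisional runs alternate because the end of a sprouting run leaves the meeting point at a node (three curves through a point is not SNC, so the resolution cannot stop there), and the whole thing is assembled by induction on the length of $\sigma$. This is also exactly the mechanism the paper itself deploys, in full detail, when it proves the refinement Proposition~\ref{prop:ms} by induction using Lemmas~\ref{lem:ch1} and~\ref{lem:ch2}; so your strategy is consistent with the author's own methods.

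That said, what you have written is a plan rather than a proof, and you have placed the genuine content in the part you defer. The two points that actually require work are precisely the ones you flag: (1) that after $\rho_1$ the residual germ together with the divisor $A_1+B_1$ already in place is resolved by exactly the tail of $\sigma$ --- i.e.\ that no centre of a later blow-up returns to $A_1+B_1$, which is what makes (iii)(a) true and makes the induction legitimate; and (2) that each block $\rho_i'$, $\rho_i''$ is nonempty and the last block is subdivisional, which is what gives the graph its branched shape with $B_i$ attached at $A_{i+1,1}$ (resp.\ at $D_0$). Both are controlled by tracking the local intersection number of the strict transform with the previous exceptional curve, as in Lemma~\ref{lem:ch2}(ii) (the statement $(C_{b_2}E_{b_1})_{P_{b_2}}=m_{1,3}$ and its vanishing when $n_1=2$ is exactly the ``frozen versus not frozen'' dichotomy you invoke). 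Until that bookkeeping is carried out --- for instance by modelling it on the proof of Proposition~\ref{prop:ms} --- the argument for (i) and (ii) is incomplete, and (iii)(b), (c) (which you reduce to the blow-down analysis behind Lemma~\ref{lem:bu}) rest on it.
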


We regard $A_i$ and $B_i$ as linear chains
in the same way as in Lemma~\ref{lem:cres} (i).
By Lemma~\ref{lem:cres0},
these linear chains are admissible.
Let $\NS_i$ denote the number of the blow-ups in $\rho_i'$.
The following proposition follows from Lemma~\ref{lem:bu}.
\begin{proposition}\label{prop:cres}
The following assertions hold for $i=1,\ldots,g$.
\begin{enumerate}
\item[(i)]
We have $A_i=\TW{\NS_i}\TA B_i^{\ast}$, $A_i^{\ast}=[B_i,\NS_i+1]$.
\item[(ii)]
The linear chain $A_i$ contains an irreducible component $E$ with $E^2\le-3$.
\end{enumerate}
\end{proposition}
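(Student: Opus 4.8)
The plan is to run Lemma~\ref{lem:bu} once for each level $i$, applied to the morphism $\rho_i$ whose structure is given by Lemma~\ref{lem:cres}. Fix $i\in\{1,\dots,g\}$. By Lemma~\ref{lem:cres}(iii)(c), $\rho_i$ contracts a linear chain to a single $(-1)$-curve, and the first task is to put this in the form required by Lemma~\ref{lem:bu} with $a=1$. For $i=g$ the chain is $A_g+D_0+B_g$, and since $D_0$ is the exceptional curve of the last blow-up of $\sigma$ (so $D_0^2=-1$ on $V'$) this is a chain $[A_g,1,B_g]$. For $i<g$ the chain is $(\rho_{i+1}\circ\cdots\circ\rho_g)(A_i+A_{i+1,1}+B_i)$, which by Lemma~\ref{lem:cres}(iii)(a) equals $A_i+E'+B_i$ with $E':=(\rho_{i+1}\circ\cdots\circ\rho_g)(A_{i+1,1})$, a $(-1)$-curve by Lemma~\ref{lem:cres}(iii)(b); so again this is $[A_i,1,B_i]$. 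In both cases $A_i$ and $B_i$ are admissible (in particular non-empty), and the target $(-1)$-curve is $\rho_i(A_{i,1})$, hence the image of the chain's initial subchain $A_i$ under $\rho_i$.

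Applying Lemma~\ref{lem:bu}(i) with $a=1$, $A=A_i$, $B=B_i$ then yields a positive integer $n$ with $A_i^{\ast}=[B_i,n+1,\TW{0}]=[B_i,n+1]$ and, since $[1]\TA\TW{n}=\TW{n}$ by the definition of $\TA$, also $A_i=[1]\TA\TW{n}\TA B_i^{\ast}=\TW{n}\TA B_i^{\ast}$. To finish~(i) I would show $n=\NS_i$. By Lemma~\ref{lem:bu}(ii) exactly $n$ of the blow-ups of $\rho_i$ are sprouting with respect to the target curve and its preimages, while $\NS_i$ is by definition the number of blow-ups in $\rho_i'$, which by Lemma~\ref{lem:cres}(ii) are exactly the blow-ups of $\rho_i$ that are sprouting with respect to the preimages of $P$. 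Because $\rho_i$ changes only the chain $[A_i,1,B_i]$ and its preimages — it does not touch the $A_j,B_j$ with $j<i$ by Lemma~\ref{lem:cres}(iii)(a), and those with $j>i$ do not yet exist — these two notions of sprouting agree on the blow-ups of $\rho_i$, so $n=\NS_i$.

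For~(ii), note that $B_i$ admissible forces its adjoint $B_i^{\ast}$ admissible; writing $B_i^{\ast}=[b_1,\dots,b_s]$ with $s\ge1$ and every $b_j\ge2$, the definition of $\TA$ gives
\[
A_i=\TW{\NS_i}\TA B_i^{\ast}=[\underbrace{2,\dots,2}_{\NS_i-1},\,b_1+1,\,b_2,\dots,b_s].
\]
Thus the component $E:=A_{i,\NS_i}$ of $A_i$ satisfies $-E^2=b_1+1\ge3$, i.e.\ $E^2\le-3$.

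I expect the only real difficulty to be bookkeeping rather than mathematics: fixing the orientation of the chain contracted by $\rho_i$ so that it is literally $[A_i,1,B_i]$ (so that the formulas of Lemma~\ref{lem:bu} transcribe verbatim), and checking that the sprouting count supplied by Lemma~\ref{lem:bu}(ii), measured against the single target curve, coincides with $\NS_i$, which is defined against the whole preimage of $P$. Both reduce to Lemma~\ref{lem:cres} — especially part~(iii)(a) — after which everything is direct substitution into Lemma~\ref{lem:bu} together with the linear chain calculus of Section~2.
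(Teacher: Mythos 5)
Your proposal is correct and follows exactly the route the paper intends: the paper gives no written proof beyond the sentence ``The following proposition follows from Lemma~\ref{lem:bu},'' and your argument is precisely that deduction, applying Lemma~\ref{lem:bu} with $a=1$ to the contractions $\rho_i$ of $[A_i,1,B_i]$ supplied by Lemma~\ref{lem:cres}, identifying $n=\NS_i$ via the sprouting/subdivisional decomposition, and reading off the component with $E^2\le -3$ from $A_i=\TW{\NS_i}\TA B_i^{\ast}$.
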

%
%%%%%%%%%%%%%%%%%%%%%%%%%%%%%%%%%%%%%%%%%%%%%%%%%%%%%%%%%%%%%%%%%%%%%%%%%%%%%%%
\subsection{%
The characteristic sequence of a cusp}\label{sec:char}
%%%%%%%%%%%%%%%%%%%%%%%%%%%%%%%%%%%%%%%%%%%%%%%%%%%%%%%%%%%%%%%%%%%%%%%%%%%%%%%
Let the notation be as in the previous subsection.
Put $\alpha_0=\mult_P C$.
We take local coordinates $(x,y)$ of $V$ around $P=(0,0)$ such that
the germ $(C,P)$ has a local parameterization:
\[
x=t^{\alpha_0},\ 
y=\sum_{i=\alpha_1}^{\infty}c_i t^i
\quad
\textnormal{($c_{\alpha_1}\ne0$, $\alpha_1>\alpha_0$, $\alpha_1 \not\equiv 0 \pmod{\alpha_0}$)}.
\]
The \emph{characteristic sequence} of $(C,P)$,
which is denoted by $\Ch_{P}=\Ch_{P}(C)$,
is a sequence $(\alpha_0,\alpha_1,\ldots,\alpha_k)$ of positive integers
defined by the following conditions.
\begin{enumerate}
\item[\textnormal{(i)}]
  $\gcd(\alpha_0,\ldots,\alpha_{k})=1$.
\item[\textnormal{(ii)}]
  If $\gcd(\alpha_0,\ldots,\alpha_{i-1})>1$,
  then $\alpha_i$ is the smallest $j$ such that $c_j \ne 0$ and that
  $\gcd(\alpha_0,\ldots,\alpha_{i-1})>\gcd(\alpha_0,\ldots,\alpha_{i-1},j)$.
\end{enumerate}
The multiplicity sequence of $P$ is determined by $\Ch_{P}$ as follows.
Put $\gamma_i=\alpha_i-\alpha_{i-1}$ for $i=1,\ldots,k$.
Perform the Euclidean algorithm for $i=1,\ldots,k$:
\[
\vbox{\ialign{\hfil$#$&$#$\hfil\quad&#\hfil\cr
\gamma_i&{}=a_{i,1}m_{i,1}+m_{i,2}&$(0<m_{i,2}<m_{i,1})$,\cr
m_{i,1}&{}=a_{i,2}m_{i,2}+m_{i,3}&$(0<m_{i,3}<m_{i,2})$,\cr
&\cdots&\hfil$\cdots$\cr
m_{i,n_i-2}&{}=a_{i,n_i-1}m_{i,n_i-1}+m_{i,n_i}&$(0<m_{i,n_i}<m_{i,n_i-1})$,\cr
m_{i,n_i-1}&{}=a_{i,n_i}m_{i,n_i},&\cr
}}
\]
where $m_{1,1}=\alpha_0$ and $m_{i+1,1}=m_{i,n_i}$.
Note that $a_{i,n_i}>1$, $n_i>1$,
and that $a_{i,j}>0$ if $j>1$ but $a_{i,1}\ge 0$ for each $i$.
The multiplicity sequence of $P$ is given by
\[
(\alpha_0,
\overbrace{m_{1,1},\ldots,m_{1,1}}^{a_{1,1}},\ldots,
\overbrace{m_{i,j},\ldots,m_{i,j}}^{a_{i,j}},\ldots,
\overbrace{1,\ldots,1}^{a_{k,n_k}}).
\]
Conversely,
$\Ch_{P}$ is determined from $\ms_P$ by the above relation.
See \cite[p.516, Theorem 12]{bk} for details,
where $\gamma_1$ is defined as $\gamma_1=\alpha_1$.
We remark that the \emph{Puiseux pairs}
$(q_1,p_1),\ldots,(q_k,p_k)$
of $(C,P)$ are computed
from $\Ch_P$ by the relations:
\[
\alpha_0=q_1\cdots q_k,\,\,
\frac{\alpha_i}{\alpha_0}=\frac{p_i}{q_1\cdots q_i},\,\,
\gcd(q_i,p_i)=1\text{ for $i=1,\ldots,k$.}
\]

We next describe the relation between
the multiplicity sequence determined by $\Ch_{P}$
and the linear chains $A_i$, $B_i$.
\begin{proposition}[{cf.~\cite[p.524, Theorem 15]{bk}}]\label{prop:ms}
We have the following relations between
the multiplicity sequence
$(m_{1,1},(m_{1,1})_{a_{1,1}},\ldots,(m_{k,n_k})_{a_{k,n_k}})$
and
$A_1,B_1,\ldots,A_g,B_g$.
In particular $g=k$.
\begin{enumerate}
\item[(i)]
If $n_i$ is an odd number,
then
\begin{eqnarray*}
&A_i=\TW{a_{i,1}+1}\TA[a_{i,2}]\TA\cdots\TA\TW{a_{i,n_i-2}+1}\TA[a_{i,n_i-1}]\TA\TW{a_{i,n_i}},\\
&B_i=[a_{i,n_i}]\TA\TW{a_{i,n_i-1}+1}\TA\cdots\TA[a_{i,5}]\TA\TW{a_{i,4}+1}\TA[a_{i,3}]\TA\TW{a_{i,2}},
\end{eqnarray*}
where we interpret $A_i$, $B_i$ as
$A_i=\TW{a_{i,1}+1}\TA[a_{i,2}]\TA\TW{a_{i,3}}$,
$B_i=[a_{i,3}]\TA\TW{a_{i,2}}$ when $n_i=3$.
\item[(ii)]
If $n_i$ is an even number,
then
\begin{eqnarray*}
&A_i=\TW{a_{i,1}+1}\TA[a_{i,2}]\TA\cdots\TA\TW{a_{i,n_i-1}+1}\TA[a_{i,n_i}],\\
&B_i=\TW{a_{i,n_i}}\TA[a_{i,n_i-1}]\TA\TW{a_{i,n_i-2}+1}\TA\cdots\TA[a_{i,5}]\TA\TW{a_{i,4}+1}\TA[a_{i,3}]\TA\TW{a_{i,2}},
\end{eqnarray*}
where we interpret $A_i$, $B_i$ as
$A_i=\TW{a_{i,1}+1}\TA[a_{i,2}]$,
$B_i=\TW{a_{i,2}-1}$ when $n_i=2$.
\end{enumerate}
We have the weighted dual graphs in Figure~\ref{fig-ch} of $A_i$ and $B_i$,
where the vertices are ordered from the left-hand side to the right,
and $\ast$ (resp.~$\bullet$) denotes
a $(-1)$-curve (resp.~$(-2)$-curve).
\end{proposition}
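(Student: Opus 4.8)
The plan is to prove the statement by induction on $g$, reducing $g$ by one at each step by means of the factorization $\sigma=\sigma_0\circ\rho_1'\circ\rho_1''\circ\cdots\circ\rho_g'\circ\rho_g''$ of Lemma~\ref{lem:cres}(ii). The first ingredient is the classical fact (see \cite{bk}) that $\ms_P$ lists the multiplicities of the strict transforms of $C$ at the successive centers of the blow-ups of $\sigma$, these centers being well defined by Lemma~\ref{lem:cres0}. Using the local parameterization $x=t^{\alpha_0}$, $y=\sum_{i\ge\alpha_1}c_it^i$, I would first check that during the first phase the multiplicity stays equal to $\alpha_0$ through the sprouting blow-ups of $\rho_1'$ and then performs exactly the Euclidean descent $m_{1,1}\to m_{1,2}\to\cdots\to m_{1,n_1}$ dictated by $\gamma_1$ and $\alpha_0$. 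This shows simultaneously that the first phase realizes the first block $((m_{1,1})_{a_{1,1}},\ldots,(m_{1,n_1})_{a_{1,n_1}})$ of $\ms_P$ and that $\NS_1$ is read off from $a_{1,1}$ (it will come out to $a_{1,1}+1$); iterating --- or, formally, invoking the induction below --- yields $g=k$ together with all the values $\NS_1,\ldots,\NS_g$.

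For the chains themselves I would invoke Proposition~\ref{prop:cres}, which gives $A_i=\TW{\NS_i}\TA B_i^{\ast}$ and $A_i^{\ast}=[B_i,\NS_i+1]$; thus each pair $(A_i,B_i)$ is pinned down by the single admissible chain $B_i$ together with $\NS_i$, so it suffices to identify $B_i$ (equivalently $B_i^{\ast}$) with the asserted $\TA$-product of blocks of the forms $\TW{n}$ and $[m]$. The formula for $A_i$ then drops out from Lemma~\ref{lem:adj}(ii) --- which computes the adjoint of a $\TA$-product --- together with the known value of $\NS_i$. This is exactly where the parity of $n_i$ enters: passing to the adjoint reverses the $\TA$-product and interchanges the roles of the two kinds of factors, so the shape of $B_i$ relative to $A_i$ differs according as the number of factors is odd or even. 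The identification of $B_i$ with the Euclidean data is itself a short sub-induction on $n_i$ using the recursion of Lemma~\ref{lem:det1}(i) and the associativity $(A\TA B)\TA C=A\TA(B\TA C)$: the partial quotients $a_{i,2},\ldots,a_{i,n_i}$ are precisely those of the continued fraction controlling that chain. The degenerate cases $n_i=2$ and $n_i=3$ are checked by hand, as they are already recorded in the statement.

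To run the main induction I would observe that after the first phase the strict transform $C^{(1)}$ of $C$ is again a cusp, meeting the last exceptional curve of $\rho_1$ at a point $P^{(1)}$ that lies on neither $A_1$ nor $B_1$. By Lemma~\ref{lem:cres}(iii)(a) the later phases $\rho_2,\ldots,\rho_g$ do not alter $A_1,B_1$, and they constitute the minimal embedded resolution of $(C^{(1)},P^{(1)})$ --- minimality of this tail following from that of $\sigma$ --- so the chains it produces are literally $A_2,B_2,\ldots,A_g,B_g$. Since $\ms_{P^{(1)}}(C^{(1)})$ is the tail of $\ms_P(C)$, that is, blocks $2,\ldots,k$, applying the inductive hypothesis to $(C^{(1)},P^{(1)})$ identifies $A_2,B_2,\ldots,A_g,B_g$ with the asserted products, while $A_1,B_1$ were settled above. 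The base case $g=1$, where $\Ch_P=(\alpha_0,\alpha_1)$ with $\gcd(\alpha_0,\alpha_1)=1$, is the explicit computation of the minimal embedded resolution of a germ $x=t^{\alpha_0}$, $y=t^{\alpha_1}+\cdots$ through the continued fraction of $\alpha_1/\alpha_0$. The weighted dual graphs of Figure~\ref{fig-ch} are then obtained by simply drawing out the chains just computed. I expect the main obstacle to be the bookkeeping in the second step: keeping precise track of how the ``$+1$''s attached to the partial quotients migrate under the adjoint operation and of the parity switch of $n_i$, and disposing once and for all of the degenerate low-$n_i$ cases. Neither this nor pinning down the tail germ $(C^{(1)},P^{(1)})$ is conceptually deep, but both are delicate and easy to get wrong by an index.
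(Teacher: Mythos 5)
Your outer architecture is reasonable and partly parallels the paper: the paper also handles the blocks one at a time, disposing of $A_i,B_i$ for $i\ge 2$ by replaying the argument for the first block on the residual germ $(C_b,P_b)$, and your observation that Proposition~\ref{prop:cres} reduces the problem to identifying one chain per block (with $\NS_1=a_{1,1}+1$) is correct. But there is a genuine gap at the step you dismiss as ``a short sub-induction on $n_i$ using the recursion of Lemma~\ref{lem:det1}(i)'': identifying $B_i$ (equivalently, carrying out your base case $g=1$) is the entire geometric content of the proposition, and none of the tools you invoke can do it. Lemma~\ref{lem:det1}(i) is a purely combinatorial recursion for discriminants of linear chains; it tells you nothing about \emph{which} linear chain a given cusp produces. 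Knowing that the multiplicities perform the Euclidean descent $m_{i,1}\to m_{i,2}\to\cdots$ determines the multiplicity sequence but not the dual graph: for that you must know, at each subdivisional blow-up, which of the two previously created exceptional curves the new center sits on, i.e.\ when the center ``returns'' to an older curve and starts a new branch. The phrase ``the continued fraction controlling that chain'' presupposes exactly the correspondence you are supposed to prove.

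The paper supplies the missing input with Lemmas~\ref{lem:ch1} and~\ref{lem:ch2} (adapted from Flenner--Zaidenberg): by writing $(C_{b_1}E_{b_1})_{P_{b_1}}=a_{1,2}m_{1,2}+m_{1,3}$ and applying Lemma~\ref{lem:ch2} to the curve $E_{b_1}$, one computes $(C_{b_2}E_{b_1})_{P_{b_2}}=m_{1,3}$, which is what forces the center to return to $E_{b_1}$ after exactly $a_{1,2}$ blow-ups and gives $E_{b_1}^2=-a_{1,2}-2$; iterating this is an induction on $n_1$ (not on $g$) that assembles $[A_1,1,B_1]$ piece by piece and in particular yields $g=k$. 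Your proposal contains no lemma tracking the local intersection numbers $(C_q E_j)_{P_q}$ through the resolution, so it cannot decide the branching pattern and hence cannot pin down $B_i$; the parity discussion via the adjoint, while correct in spirit, is downstream of this unproved identification. (A secondary, fixable point: Lemma~\ref{lem:adj}(ii) computes $B_i^{\ast}$ from the \emph{entries} of $B_i$, not from a $\TA$-factorization, so you would need iterated use of Lemma~\ref{lem:adj}(i) to pass from the asserted factorization of $B_i$ to that of $A_i=\TW{\NS_i}\TA B_i^{\ast}$.)
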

\begin{figure}
\begin{center}
\includegraphics{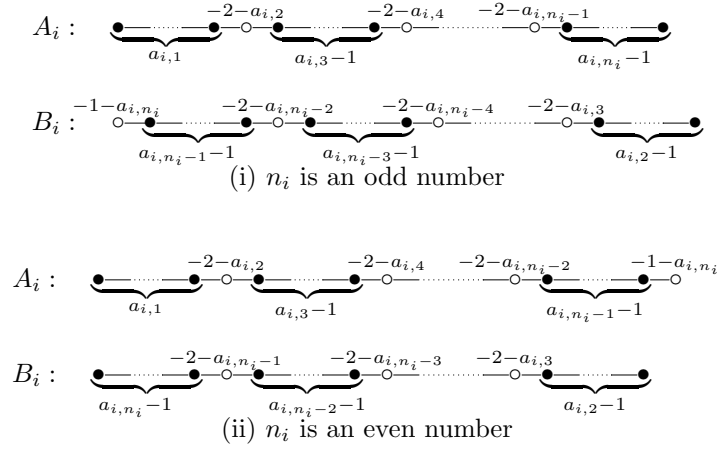}
\end{center}
\caption{The weighted dual graphs of $A_i$ and $B_i$}\label{fig-ch}
\end{figure}
In order to prove Proposition~\ref{prop:ms},
we need Lemma~\ref{lem:ch1} and Lemma~\ref{lem:ch2} below.
Let
\(
V'=V_n\stackrel{\sigma_{n-1}}{\longrightarrow}V_{n-1}
\longrightarrow\cdots\longrightarrow
V_2\stackrel{\sigma_1}{\longrightarrow}
V_1\stackrel{\sigma_0}{\longrightarrow}V_0=V
\)
be the blow-ups of the minimal embedded resolution $\sigma$
of the cusp $P$
as in the previous subsection.
For $i>j$,
put $\tau_{i,j}=\sigma_{j}\circ\sigma_{j+1}\circ\cdots\circ\sigma_{i-1}:V_i\rightarrow V_j$.
Let $E_i$ denote the exceptional curve of $\sigma_{i-1}$.
We use the same symbol to denote the strict transforms of $E_i$.
Let $(C_i,P_i)$ denote the strict transform of the curve germ $(C,P)$ on $V_i$,
where $C_i\cap E_i=\{P_i\}$.
Write $\ms_{P}(C)$ as $\ms_{P}(C)=(m_0,m_1,\ldots)$.

\begin{lemma}[{cf.~\cite[Lemma 1.3]{fz:dd2}}]\label{lem:ch1}
Suppose $m_0=\cdots=m_{q-1}$.
\begin{enumerate}
\item[(i)]
$(C_q E_q)_{P_q}=m_0$ and $(C_q E_i)_{P_q}=0$ for each $i\ne q$.
\item[(ii)]
The dual graph of $\tau_{q,0}^{-1}(P)$ is linear.
We have
\[
\tau_{q,0}^{-1}(P)=[E_1, E_2, \ldots, E_q]=[\TW{q-1},1].
\]
\end{enumerate}
\end{lemma}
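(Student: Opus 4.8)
The plan is to prove (i) and (ii) simultaneously by induction on $q$. Two ingredients will carry the argument: a projection-formula computation of the intersection numbers $(C_i\cdot E_i)$, and the elementary fact that if two germs at a point $P$ meet with local intersection number equal to the product of their multiplicities, then their strict transforms after the blow-up of $P$ are disjoint.

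First I would record, for every $i\ge 1$, the identity $(C_i\cdot E_i)=m_{i-1}$. This comes from $\sigma_{i-1}^{*}C_{i-1}=C_i+m_{i-1}E_i$ near $P_{i-1}$ together with $E_i^{2}=-1$ and $\sigma_{i-1}^{*}C_{i-1}\cdot E_i=0$; combined with Lemma~\ref{lem:cres0} (which tells us that $C_i$ meets $\tau_{i,0}^{-1}(P)$ only at $P_i$ and that $P_i\in E_i$) it gives $(C_iE_i)_{P_i}=m_{i-1}$. In particular $(C_qE_q)_{P_q}=m_{q-1}=m_0$, so the first half of (i) will be immediate once the location of $P_q$ is pinned down; the real content of the lemma is (ii) together with the assertion that $P_q$ lies on $E_q$ and on no other $E_i$.

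For the induction, the base case $q=1$ is trivial: $\tau_{1,0}^{-1}(P)=E_1$ with $E_1^{2}=-1$, i.e. $[E_1]=[1]$ (which is $[\TW{0},1]$), and $C_1$ meets it only at $P_1\in E_1$. For the inductive step I would assume $m_0=\cdots=m_q$ and apply the hypothesis at level $q$ (legitimate, since $m_0=\cdots=m_{q-1}$): this gives $\tau_{q,0}^{-1}(P)=[E_1,\dots,E_q]=[\TW{q-1},1]$, with $C_q$ meeting the configuration only at $P_q$, a point of $E_q$ lying on no other $E_i$. Then $P_q$ is a smooth point of $\tau_{q,0}^{-1}(P)$, so $\sigma_q$ is a sprouting blow-up with respect to $E_q$: it drops $E_q^{2}$ to $-2$, attaches a new $(-1)$-curve $E_{q+1}$ meeting $E_q$ and nothing else, and leaves $E_1,\dots,E_{q-1}$ untouched; hence $\tau_{q+1,0}^{-1}(P)=[E_1,\dots,E_{q+1}]=[\TW{q},1]$, as wanted. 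To locate $P_{q+1}$ I would use the preliminary identity: $(C_qE_q)_{P_q}=m_{q-1}=m_q=\mult_{P_q}C_q=\mult_{P_q}C_q\cdot\mult_{P_q}E_q$, so $C_q$ and $E_q$ share no tangent direction at $P_q$; therefore their strict transforms under $\sigma_q$ are disjoint, i.e. $C_{q+1}\cap E_q=\emptyset$, and since $\sigma_q$ is an isomorphism off $P_q$ while $E_1,\dots,E_{q-1}$ avoid $P_q$, also $C_{q+1}\cap E_i=\emptyset$ for $i<q$. Finally $C_{q+1}$ meets $E_{q+1}$ in a single point $P_{q+1}$ by Lemma~\ref{lem:cres0}, necessarily off $E_1,\dots,E_q$, with $(C_{q+1}E_{q+1})_{P_{q+1}}=m_q=m_0$, closing the induction.

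The step I expect to be most delicate is the tangent-cone argument: one must carefully justify that $(C\cdot D)_P=\mult_PC\cdot\mult_PD$ forces the tangent cones to have no common component, and then use that $C_q$ is a single branch (a cusp) and $E_q$ is smooth, so that their unique tangent directions are distinct and the blow-up of $P_q$ genuinely separates the two curves. Everything else — the projection formula, the reading off of the chain $[\TW{q},1]$ after a sprouting blow-up, and the invocations of Lemma~\ref{lem:cres0} — is routine bookkeeping and should present no obstacle.
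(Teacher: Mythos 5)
Your argument is correct, but it is organized differently from the paper's, so a comparison is in order. The paper disposes of (i) by citing \cite[Lemma 1.3]{fz:dd2} and then proves (ii) by an induction that peels off the \emph{first} blow-up: it applies the inductive hypothesis to the germ $(C_1,P_1)$ on $V_1$, whose multiplicity sequence is $(m_1,m_2,\ldots)$ with $m_1=\cdots=m_{q-1}$, obtains the chain $[E_2,\ldots,E_q]=[\TW{q-2},1]$, and then uses (i) to see that among the centers $P_1,\ldots,P_{q-1}$ only $P_1$ lies on $E_1$, so that $E_1$ ends up as a $(-2)$-curve attached to $E_2$ alone. You instead run a single forward induction that adds the \emph{last} blow-up and proves (i) and (ii) together, supplying a proof of (i) rather than quoting it: the projection formula $\sigma_{i-1}^{*}C_{i-1}=C_i+m_{i-1}E_i$ gives $C_i\cdot E_i=m_{i-1}$, and the equality $(C_qE_q)_{P_q}=m_{q-1}=m_q=\mult_{P_q}C_q\cdot\mult_{P_q}E_q$ forces the strict transforms to separate, i.e.\ $C_{q+1}\cap E_q=\emptyset$ --- which is exactly the statement that the next center stays off all previously created exceptional curves as long as the multiplicity does not drop. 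Both inductions rest on this same combinatorial fact; yours is self-contained, the paper's is shorter at the price of importing (i). Two minor remarks: in your ``delicate'' step it is cleaner to invoke the identity $(C\cdot D)_P=\mult_PC\cdot\mult_PD+\sum_Q(C'\cdot D')_Q$ (sum over points $Q$ of the exceptional curve), which yields the disjointness of the strict transforms over $P_q$ in one line without any discussion of tangent cones; and you implicitly assume $q<n$ so that $\sigma_q$ exists in the inductive step, which is harmless since the lemma is only applied in that range.
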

\begin{proof}
The assertion (i) follows from \cite[Lemma 1.3]{fz:dd2}.
We prove the assertion (ii) by induction on $q$.
The assertion is clear if $q=1$.
Assume $q>1$.
We have $\ms_{P_1}=(m_1,m_2,\ldots)$.
By the induction hypothesis,
the dual graph of
$\tau_{q,1}^{-1}(P_1)$ is linear and
$\tau_{q,1}^{-1}(P_1)=[E_2,\ldots,E_q]=[\TW{q-2},1]$.
By (i),
the center of $\sigma_1$ is on $E_1$, while that of $\sigma_i$ is not
for $i\ge 2$.
This means that $E_1^2=-2$ on $V_q$ and that
$E_1$ intersects only $E_2$ among $E_2,\ldots,E_q$.
\end{proof}
\begin{lemma}[{cf.~\cite[Lemma 1.4]{fz:dd2}}]\label{lem:ch2}
Let $l$ be a projective curve on $V$ which is smooth at $P$.
Let $l_i$ denote the strict transform of $l$ on $V_i$.
Write $(C l)_P=q m_0+r$,
where $1\le q$, $0\le r<m_0$.
\begin{enumerate}
\item[(i)]
  We have $m_0=\cdots=m_{q-1}$.
  Moreover, $m_{q}=r$ if $r>0$.
\item[(ii)]
  We have
  $(C_q l_q)_{P_q}=r$, $l_q^2=l^2-q$,
  $\tau_{q,0}^{-1}(l)=E_1+\cdots+E_q+l_q$,
  $E_q l_q=1$ and $E_1 l_q=\cdots=E_{q-1} l_q=0$.
\end{enumerate}
\end{lemma}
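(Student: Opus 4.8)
The plan is to argue by induction on $q$, combining the standard behaviour of local intersection multiplicities and of multiplicities under a single blow-up with Lemma~\ref{lem:ch1}. The single-blow-up input I need is: $C_1\cdot E_1=m_0$, concentrated at $P_1$ because $(C,P)$ is unibranch; $\sigma_0^{-1}(l)=l_1+E_1$, $l_1^2=l^2-1$, $E_1l_1=1$, since $P$ is a smooth point of $l$; and $(Cl)_P=\mult_P(C)\mult_P(l)+\sum_{Q\in E_1}(C_1l_1)_Q$, where the sum can only be nonzero at $P_1$ (as $C$ is unibranch and $C,l$ have no common component near $P$). Hence $(C_1l_1)_{P_1}=(Cl)_P-m_0$ if $P_1\in l_1$ and $(C_1l_1)_{P_1}=0$ otherwise.

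The crux is a local fact: if $P_1\in l_1$, then $l_1$ meets $E_1$ transversally at $P_1$ (because $E_1l_1=1$), and moreover, if in addition $m_1<m_0$, then $(C_1l_1)_{P_1}=m_1$. Indeed $(C_1E_1)_{P_1}=m_0>m_1=\mult_{P_1}(C_1)$ forces $C_1$ to be tangent to $E_1$ at $P_1$; being unibranch there, its tangent cone is that tangent line with multiplicity $m_1$, and since $l_1$ has a different tangent direction we get $(C_1l_1)_{P_1}=m_1\mult_{P_1}(l_1)=m_1$.

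For $q=1$ one writes $(Cl)_P=m_0+r$ and reads off $(C_1l_1)_{P_1}=r$; the remaining assertions of (ii) are the standard properties of $\sigma_0$ above (the relations $E_1l_q=\dots=E_{q-1}l_q=0$ being vacuous). If $r>0$ then $P_1\in l_1$, and $m_1=m_0$ is impossible, for it would give $(C_1l_1)_{P_1}\ge\mult_{P_1}(C_1)\mult_{P_1}(l_1)=m_1=m_0>r$; hence $m_1<m_0$ and the local fact gives $m_q=m_1=r$. For $q\ge2$ we have $(C_1l_1)_{P_1}=(q-1)m_0+r\ge m_0>0$, so $P_1\in l_1$, and the local fact forces $m_1=m_0$ (otherwise $(C_1l_1)_{P_1}=m_1<m_0\le(q-1)m_0+r$). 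Then $m_1=m_0\ge2$, so $(C_1,P_1)$ is again a cusp with minimal embedded resolution $\sigma_1\circ\cdots\circ\sigma_{n-1}$ and $\ms_{P_1}(C_1)=(m_1,m_2,\dots)$, and $(C_1l_1)_{P_1}=(q-1)m_1+r$ with $0\le r<m_1$; the induction hypothesis applied to $(C_1,l_1)$, reindexed by one and combined with $m_0=m_1$ and $l_1^2=l^2-1$, yields (i) and the local parts of (ii). For the global parts, since $m_0=\dots=m_{q-1}$ is now known, Lemma~\ref{lem:ch1}(ii) gives $\tau_{q,0}^{-1}(P)=[E_1,\dots,E_q]$; in particular $E_1$ is blown up only once after $V_1$, by $\sigma_1$, so the centres $P_2,\dots,P_{q-1}$ avoid $E_1$, whence $l_1$ and $E_1$ --- transversal at $P_1$ --- separate under $\sigma_1$ and stay disjoint, giving $E_1l_q=0$ and $\tau_{q,1}^{-1}(E_1)=E_1+E_2$; combining this with $\sigma_0^{-1}(l)=l_1+E_1$ and the inductive description of $\tau_{q,1}^{-1}(l_1)$ gives $\tau_{q,0}^{-1}(l)=E_1+\dots+E_q+l_q$.

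The step I expect to be the main obstacle is precisely the propagation of multiplicities --- proving $m_1=m_0$ when $q\ge2$, and $m_1=r$ when $q=1$ and $r>0$ --- since it is the only place where one must use the tangent cone of $C_1$ at $P_1$, not just intersection numbers; the remainder is bookkeeping with the blow-up formula and Lemma~\ref{lem:ch1}. (Alternatively, assertion (i) may be extracted from \cite[Lemma 1.4]{fz:dd2}, as is done for the previous lemma, leaving only (ii) to be established.)
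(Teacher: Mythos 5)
Your proof of (ii) is correct and follows the paper's argument essentially verbatim: induction on $q$, with the base case handled by the blow-up formula at $P$ and the inductive step obtained by applying the hypothesis to $(C_1,l_1)$ on $V_1$ (using $m_0=m_1$ to rewrite $(C_1l_1)_{P_1}=(q-1)m_1+r$) and observing that $E_1$ and $l_1$, meeting transversally at $P_1$, separate after $\sigma_1$, so $E_1l_q=0$. The only divergence is in (i), which the paper simply imports from \cite[Lemma 1.4]{fz:dd2} --- exactly the alternative you flag in your closing parenthesis --- whereas you supply a correct self-contained argument via the tangent cone of the unibranch germ $C_1$ at $P_1$.
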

\begin{proof}
The assertion (i) follows from \cite[Lemma 1.4]{fz:dd2}.
We prove the assertion (ii) by induction on $q$.
On $V_1$, we have $(C_1 l_1)_{P_1}=(q-1)m_0+r$,
$l_1^2=l^2-1$ and $\sigma_0^{-1}(l)=E_1+l_1$.
So the assertion is clear if $q=1$.
Assume that $q>1$.
We use the induction hypothesis on $V_1$.
Since $(C_1l_1)_{P_1}=(q-1)m_1+r$,
we have
$(C_q l_q)_{P_q}=r$, $l_q^2=l_1^2-q+1=l^2-q$,
$\tau_{q,1}^{-1}(l_1)=E_2+\cdots+E_q+l_q$,
$E_q l_q=1$ and $E_2 l_q=\cdots=E_{q-1} l_q=0$.
Since $E_1l_1=1$ on $V_1$, the curve $E_1$ does not intersect $l_q$.
\end{proof}
\begin{proof}[Proof of Proposition~\ref{prop:ms}]
We first show the assertion for $A_1$ and $B_1$ by induction on $n_1$.
Put $b_i=1+\sum_{j=1}^{i}a_{1,j}$.
By applying Lemma~\ref{lem:ch1} to $(C,P)$ with $q=b_1$,
we have $\tau_{b_1,0}^{-1}(P)=[E_1,\ldots,E_{b_1-1},E_{b_1}]=[\TW{a_{1,1}},1]$ and
$(C_{b_1} E_{b_1})_{P_{b_1}}=m_{1,1}$.
We see $\ms_{P_{b_1}}(C_{b_1})=((m_{1,2})_{a_{1,2}},\ldots)$.
\begin{center}
\includegraphics{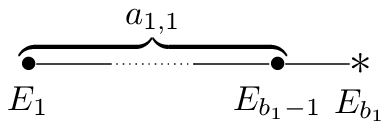}
\end{center}
We next apply Lemma~\ref{lem:ch1} to $(C_{b_1},P_{b_1})$ with $q=a_{1,2}$.
We have $\tau_{b_2,b_1}^{-1}(P_{b_1})=[E_{b_1+1},\ldots,E_{b_2-1},E_{b_2}]=[\TW{a_{1,2}},1]$ and $(C_{b_2} E_{b_2})_{P_{b_2}}=m_{1,2}$.
\begin{center}
\includegraphics{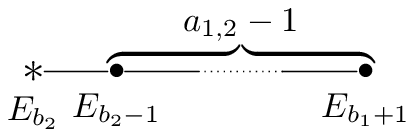}
\end{center}

We then apply Lemma~\ref{lem:ch2} to $E_{b_1}$ and $(C_{b_1},P_{b_1})$.
Because
$(C_{b_1}E_{b_1})_{P_{b_1}}=a_{1,2}m_{1,2}$ ($n_1=2$)
or
$(C_{b_1}E_{b_1})_{P_{b_1}}=a_{1,2}m_{1,2}+m_{1,3}$ ($n_1>2$),
it follows that
$\tau_{b_2,b_1}^{-1}(E_{b_1})=[E_{b_1+1},\ldots,E_{b_2-1},E_{b_2},E_{b_1}]=[\TW{a_{1,2}-1},1,1+a_{1,2}]$
and that $(C_{b_2}E_{b_1})_{P_{b_2}}=0$ ($n_1=2$) or 
$(C_{b_2}E_{b_1})_{P_{b_2}}=m_{1,3}$ ($n_1>2$).
Since $P_{b_1}\not\in E_i$ for $i<b_1$,
we see
$\tau_{b_2,0}^{-1}(P)=[E_1,\ldots,E_{b_1-1},E_{b_1},E_{b_2},E_{b_2-1},\ldots,E_{b_1+1}]=[\TW{a_{1,1}},1+a_{1,2},1,\TW{a_{1,2}-1}]$.
\begin{center}
\includegraphics{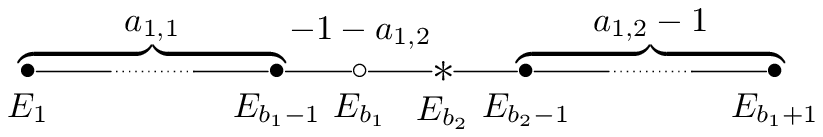}
\end{center}

Suppose that $n_1=2$.
Since $m_{1,1}=a_{1,2}m_{1,2}$,
we have $P_{b_2}\not\in E_{i}$ for $i< b_2$.
Thus the weighted dual graph of $\tau_{b_2,0}^{-1}(P)-E_{b_2}$
is unchanged by the remaining blow-ups.
The vertex corresponding to $E_{b_2}$ is a branching component of
the dual graph of $\sigma^{-1}(P)+C'$.
Because $A_1$ contains $E_1$,
we have $A_1=\TW{a_{1,1}+1}\TA[a_{1,2}]$, $B_1=\TW{a_{1,2}-1}$.

Suppose that $n_1>2$.
We have $\ms_{P_{b_1}}(C_{b_1})=((m_{1,2})_{a_{1,2}},m_{1,3},\ldots)$.
Since $(C_{b_2}E_{b_1})_{P_{b_2}}=m_{1,3}=\mult_{P_{b_2}}(C_{b_2})$,
we see $P_{b_2}\in E_{b_1}$ and $P_i\not\in E_{b_1}$ for $i>b_2$.
It follows that
$E_{b_1}^2=-a_{1,2}-2$ and that $E_{b_1}$ intersects $E_{b_2+1}$
on $V_i$ for $i>b_2$.
We apply the induction hypothesis
to $(C_{b_1},P_{b_1})$.
Put $T=\tau_{b_{n_1},b_1}^{-1}(P_{b_1})$.
We write it as $T=[A,1,B]$, where $A$ contains $E_{b_1+1}$.
If $n_1$ is an odd number, then
\begin{eqnarray*}
&A=\TW{a_{1,2}}\TA[a_{1,3}]\TA\TW{a_{1,4}+1}\TA[a_{1,5}]\TA\cdots\TA\TW{a_{1,n_1-1}+1}\TA[a_{1,n_1}],\\
&B=\TW{a_{1,n_1}}\TA[a_{1,n_1-1}]\TA\TW{a_{1,n_1-2}+1}\TA\cdots\TA[a_{1,6}]\TA\TW{a_{1,5}+1}\TA[a_{1,4}]\TA\TW{a_{1,3}}.
\end{eqnarray*}
If $n_1$ is an even number, then
\begin{eqnarray*}
&A=\TW{a_{1,2}}\TA[a_{1,3}]\TA\TW{a_{1,4}+1}\TA[a_{1,5}]\TA\cdots\TA\TW{a_{1,n_1-2}+1}\TA[a_{1,n_1-1}]\TA\TW{a_{1,n_1}},\\
&B=[a_{1,n_1}]\TA\TW{a_{1,n_1-1}+1}\TA\cdots\TA[a_{1,6}]\TA\TW{a_{1,5}+1}\TA[a_{1,4}]\TA\TW{a_{1,3}}.
\end{eqnarray*}

The first curve of $A$ is $E_{b_1+1}$ by Lemma~\ref{lem:cres} (iii).
It follows that
$\tau_{b_{n_1},0}^{-1}(P)=[E_{1},\ldots,E_{b_1-1},E_{b_1},\TP{T}]$.
By the induction hypothesis,
$A$ and $B$ are unchanged by the remaining blow-ups.
We infer that
$\tau_{b_{n_1},0}^{-1}(P)-E_{b_{n_1}}$ is also
unchanged by the remaining blow-ups.
Hence $A_1=[\TW{a_{1,1}},a_{1,2}+2,\TP{B}]$, $B_1=\TP{A}$.
We can prove the assertion for $A_i$ and $B_i$ with $i\ge 2$
by using the same arguments as above, where
$(C_b,P_b)$ ($b=\sum_{j=1}^{i-1}\sum_{k=1}^{n_j}a_{j,k}$)
plays the role of $(C,P)$.
\end{proof}
%
%
%%%%%%%%%%%%%%%%%%%%%%%%%%%%%%%%%%%%%%%%%%%%%%%%%%%%%%%%%%%%%%%%%%%%%%%%%%%%%%%
\section{%
Proof of Theorem~\ref{thm0}}\label{sec:bck1}
%%%%%%%%%%%%%%%%%%%%%%%%%%%%%%%%%%%%%%%%%%%%%%%%%%%%%%%%%%%%%%%%%%%%%%%%%%%%%%%
Let $C$ be a rational bicuspidal plane curve.
Let $P_1,P_2$ denote the cusps of $C$.
Let $\sigma:V\rightarrow\SP^2$ be the minimal embedded resolution
of the cusps
and $C'$ the strict transform of $C$ via $\sigma$.
Put $D:=\sigma^{-1}(C)$.
We may assume $\sigma=\sigma^{(1)}\circ\sigma^{(2)}$, where
$\sigma^{(k)}$ consists of the blow-ups over $P_k$.
We decompose
the dual graph of $\sigma^{-1}(P_k)$ ($k=1,2$)
into subgraphs
$A^{(k)}_1,B^{(k)}_1,\ldots,A^{(k)}_{g_k},B^{(k)}_{g_k},D^{(k)}_{0}$
in the same way as in Lemma~\ref{lem:cres}.
\begin{center}
\includegraphics{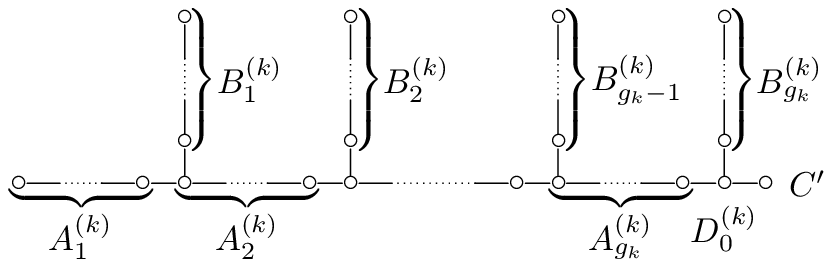}
\end{center}
By definition,
$A^{(k)}_1$ contains the exceptional curve of the first blow-up
over $P_k$.
We give the weighted graphs $A^{(k)}_1,\ldots,A^{(k)}_{g_k}$
(resp.~$B^{(k)}_1,\ldots,B^{(k)}_{g_k}$)
the direction
from the left-hand side to the right
(resp.~from the bottom to the top) of the above figure.
With these directions,
we regard $A^{(k)}_i$ and $B^{(k)}_i$ as linear chains.
Let $\sigma^{(k)}_0$ denote the first blow-up of $\sigma^{(k)}$.
By Lemma~\ref{lem:cres},
there exists a decomposition
$\sigma^{(k)}=\sigma^{(k)}_0\circ\sigma^{(k)}_{1,1}\circ\sigma^{(k)}_{1,2}\circ\cdots\circ\sigma^{(k)}_{g_k,1}\circ\sigma^{(k)}_{g_k,2}$
such that
each $\sigma^{(k)}_{i,1}$ (resp.~$\sigma^{(k)}_{i,2}$) consists of
sprouting (resp.~subdivisional) blow-ups
with respect to preimages of $P_k$.
The morphism $\sigma^{(k)}_{i,1}\circ\sigma^{(k)}_{i,2}$
contracts $[A^{(k)}_i,1,B^{(k)}_i]$ to a ($-1$)-curve
for $i\ge1$.
Let $\NS^{(k)}_i$ denote the number of the blow-ups
of $\sigma^{(k)}_{i,1}$.
We first show the ``if'' part of Theorem~\ref{thm0}.
Assume that $\KB(\SP^2\setminus C)=1$.
Put $D_{1}^{(k)}=B_{g_k}^{(k)}$ and
$D_{2}^{(k)}=D^{(k)}-(D_{0}^{(k)}+D_{1}^{(k)})$.
The dual graph of $D$ has the following shape.
\begin{center}
\includegraphics{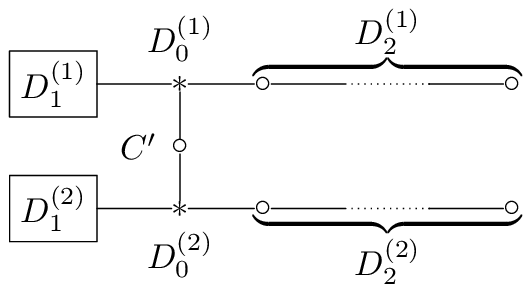}
\end{center}
Following \cite{fz:def},
we consider a strictly minimal model
$(\widetilde{V},\widetilde{D})$ of $(V,D)$.
We successively contract ($-1$)-curves $E$ satisfying
one of the following conditions:
(1) $E \subset D$ and $(D-E) E=0$,
(2) $E \subset D$ and $(D-E) E=1$,
(3) $E \subset D$ and $(D-E) E=2$,
(4) $E \not\subset D$ and $D E=0$,
(5) $E \not\subset D$ and $D E=1$.
After a finite number of contractions,
we have no $(-1)$-curves satisfying the above conditions.
Let $\pi:V \rightarrow \widetilde{V}$ be the composite of the contractions.
\begin{lemma}\label{lem:pi}
The morphism $\pi$ does not contract irreducible curves meeting with $C'$.
In particular,
$(C')^2=-1$ if and only if $C'$ is contracted by $\pi$.
\end{lemma}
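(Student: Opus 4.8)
The plan is to isolate the two $(-1)$-curves $D^{(1)}_0,D^{(2)}_0$ — which are the only components of $D$ meeting $C'$ — and to show that $\pi$ never contracts them, nor any other curve meeting $C'$. Recall that $D^{(k)}_0$ is a $(-1)$-curve: it is the middle curve in the chain $[A^{(k)}_{g_k},1,B^{(k)}_{g_k}]$ contracted by $\sigma^{(k)}_{g_k,1}\circ\sigma^{(k)}_{g_k,2}$ (Lemma~\ref{lem:cres}), and $A^{(k)}_{g_k}$, $B^{(k)}_{g_k}$ are non-empty admissible linear chains. Inside $D=C'+\sigma^{-1}(P_1)+\sigma^{-1}(P_2)$ it is a branching component: its neighbours are $C'$ (which it meets transversally at one point, by the minimality of $\sigma$; cf.\ Lemma~\ref{lem:cres0}), the last component of $A^{(k)}_{g_k}$, and the first component of $B^{(k)}_{g_k}$, so $(D-D^{(k)}_0)\cdot D^{(k)}_0=3$. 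The only tool I use throughout is that blowing down a $(-1)$-curve cannot decrease the self-intersection of a curve that is not contracted.

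\textbf{Main step: $\pi$ never contracts $D^{(1)}_0$ or $D^{(2)}_0$.} Suppose at some step of $\pi$ the image of $D^{(k)}_0$ is contracted. At that moment it is a $(-1)$-curve; since its self-intersection started at $-1$ and is non-decreasing along $\pi$, it has stayed equal to $-1$, so no curve meeting $D^{(k)}_0$ — in particular none of its three neighbours — has been contracted earlier, and no new component can have become adjacent to it. Hence at that step it still meets three components of the image of $D$, so it satisfies none of (1), (2), (3); being a component of the image of $D$ it satisfies neither (4) nor (5). Contradiction. Consequently, since the only components of $D$ meeting $C'$ are $D^{(1)}_0,D^{(2)}_0$, and since a curve can become adjacent to $C'$ only by contracting $D^{(1)}_0$ or $D^{(2)}_0$, the set of components of the image of $D$ meeting $C'$ stays equal to $\{D^{(1)}_0,D^{(2)}_0\}$ for as long as $C'$ survives.

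\textbf{Excluding exterior curves meeting $C'$.} It remains to rule out a contraction by $\pi$ of a curve $E\neq C'$ with $E\cdot C'>0$ that is not a component of $D$; such an $E$ must be contracted via (4) or (5). Condition (4) never occurs: $\sigma$ induces an isomorphism $V\setminus D\to\SP^2\setminus C$, and $\SP^2\setminus C$ contains no complete curve, so there is no $(-1)$-curve disjoint from $D$, at any step. In case (5), $E\cdot(\text{image of }D)=1$ together with $E\cdot C'\ge1$ forces $E$ to meet the image of $D$ only on $C'$, transversally. Using the previous step, $\pi$ is an isomorphism in a neighbourhood of $C'$ up to the relevant step, so the strict transform $\widetilde E$ of $E$ on $V$ is a curve, not contained in $D$, meeting $C'$ transversally at one point; its image $\Gamma\subset\SP^2$ then meets $C$ in that single point, which is a smooth point of $C$, whence $\deg\Gamma\cdot\deg C=1$ by B\'ezout — impossible since $\deg C\ge 2$. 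Thus $\pi$ contracts no curve meeting $C'$ except, possibly, $C'$ itself.

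\textbf{The ``in particular'' and the main obstacle.} Because $\pi$ contracts no curve meeting $C'$ other than $C'$, the number $(C')^2$ is unchanged by $\pi$ and $(D-C')\cdot C'=2$ throughout. If $\pi$ contracts $C'$, then at that step $C'$ is a $(-1)$-curve, so $(C')^2=-1$. Conversely, if $(C')^2=-1$, then at every step $C'$ is a $(-1)$-curve in the image of $D$ with $(D-C')\cdot C'=2$, i.e.\ it satisfies (3); since $\pi$ stops only when no $(-1)$-curve satisfies (1)--(5), $C'$ must be contracted. I expect the genuinely delicate point to be case (5) at an \emph{intermediate} step of $\pi$: one must confirm that the earlier contractions — which may in principle have eaten into the admissible chains over the cusps — still leave $\widetilde E$ disjoint from $\sigma^{-1}(P_1)\cup\sigma^{-1}(P_2)$, so that $\Gamma$ does not acquire extra intersection with $C$ at the cusps. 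The $D^{(k)}_0$-argument together with the admissibility of the chains is what makes this go through, but it requires running the whole argument as an induction on the contractions of $\pi$ with careful bookkeeping.
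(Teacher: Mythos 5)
Your first step --- that $\pi$ never contracts $D_0^{(1)}$ or $D_0^{(2)}$ because each is a $(-1)$-curve on $V$ whose self-intersection is non-decreasing along $\pi$, so that its three neighbours in $D$ survive and $(D-E)E=3$ blocks conditions (1)--(3) --- is exactly the paper's argument, and your treatment of the ``in particular'' clause (nothing meeting $C'$ is contracted, so $(C')^2$ and $(D-C')C'=2$ persist, so $C'$ satisfies (3) forever unless it is contracted) is also fine.

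The gap is exactly where you flag it, and it is not a bookkeeping afterthought but the substance of the exterior case. For $E\not\subset D$ meeting $C'$ and contracted at an intermediate step via (5), your B\'ezout computation needs the strict transform $\widetilde E$ on $V$ to be disjoint from $\sigma^{-1}(P_1)\cup\sigma^{-1}(P_2)$; if $\widetilde E$ met a component of one of the admissible chains, $\sigma(\widetilde E)$ would pass through a cusp and acquire extra intersection with $C$ there, and $\deg\Gamma\cdot\deg C=1$ would fail. Deferring this to ``an induction with careful bookkeeping'' leaves the proof incomplete. The paper closes it by running on $E$ the same reduction you already used for $D_0^{(k)}$: a curve that $\pi$ eventually contracts must have self-intersection $-1$ at the moment of contraction, and since self-intersections only increase under the blow-downs, no earlier contraction can have touched it; hence one may reorder $\pi$ so that $E$ is contracted \emph{first}, and conditions (4)/(5) are then tested against $D$ on $V$ itself. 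This yields $DE\le 1$ on $V$, which together with $EC'\ge 1$ forces $E(D-C')=0$ on $V$. At that point no B\'ezout is needed: $E$ misses the exceptional locus of $\sigma$, so $\sigma(E)$ is a plane curve with $\sigma(E)^2=E^2\le -1<0$, which is absurd. In short, the missing idea is to transport the minimality test for $\pi$ back to the first step rather than tracking the configuration forward through the intermediate surfaces; once that is done, your intermediate-step difficulties disappear.
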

\begin{proof}
Suppose that there exists an irreducible curve $E$ on $V$
which intersects $C'$ and is contracted by $\pi$.
If $E$ is a component of $D$,
then $E$ is either $D_{0}^{(1)}$ or $D_{0}^{(2)}$.
Since $E$ is a ($-1$)-curve, we may assume that
$\pi$ contracts $E$ first.
But this contraction is not allowed, since $(D-E) E=3$.
Thus  $E\not\subset D$.
Since $E$ is contracted by $\pi$,
$E$ does not intersect any components of $D$ other than $C'$.
This means that $\sigma(E)$ is a plane curve with $\sigma(E)^2 \le -1$,
which is impossible.
\end{proof}

For a divisor $E$ on $V$, we write $\widetilde{E}=\pi_{\ast}(E)$.
It is clear that $\widetilde{D}$ is an SNC-divisor
and $\KB(\widetilde{V}\setminus\widetilde{D})=1$.
\begin{lemma}
There exists a fibration $\widetilde{p}:\widetilde{V}\rightarrow \SP^1$
whose general fiber $F$ is $\SP^1$ and $\widetilde{D} F=2$.
\end{lemma}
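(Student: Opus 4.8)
The plan is to realize $\widetilde p$ as the logarithmic Iitaka fibration of the open surface $\widetilde U:=\widetilde V\setminus\widetilde D$. Since $\KB(\widetilde U)=1$, the image of that fibration is a smooth curve $B$, and by construction its general fibre, endowed with the reduced boundary divisor it inherits, has logarithmic Kodaira dimension $0$. The first --- and main --- point to establish is that the fibration is already an honest \emph{morphism} $\widetilde p\colon\widetilde V\to B$ rather than a mere rational map: this is precisely what the contractions of types (1)--(5) are designed to guarantee, and it is part of the structure theory of strictly minimal open surfaces with $\KB=1$ developed in \cite{fz:def}. I would quote that theory directly; alternatively one can argue by hand, exhibiting on $\widetilde V$ a pencil of rational curves from the explicit shape of $\widetilde D$ recorded above (the components $D_0^{(k)}$, $D_1^{(k)}$, $D_2^{(k)}$), and using that a pencil whose general member has self-intersection $0$ is automatically base-point free and so defines a morphism.

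Assume now that $\widetilde p\colon\widetilde V\to B$ is a morphism, with general fibre $F$; then $F^2=0$, and $F\setminus\widetilde D$, as an open curve, has $\KB=0$, so it is either $\SP^1$ minus two points or an elliptic curve. The elliptic case is impossible: there $F\cap\widetilde D=\emptyset$, so $F$ would be a complete curve contained in $\widetilde V\setminus\widetilde D$; but $V\setminus D\cong\SP^2\setminus C$ is affine and $\pi$ contracts only finitely many curves, so $\widetilde V\setminus\widetilde D$ contains a Zariski-dense open subset isomorphic to an open subset of $\SP^2\setminus C$, hence admits no moving family of complete curves, whereas a general fibre does move in the pencil $|F|$. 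Therefore $F\cong\SP^1$, and $F$ meets $\widetilde D$ in exactly two distinct points; since a general fibre meets the horizontal part of $\widetilde D$ transversally and misses its vertical components, this gives $\widetilde D\cdot F=2$.

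Finally, since $\widetilde p$ has general fibre $\SP^1$, the surface $\widetilde V$ is birationally ruled over $B$, so the genus of $B$ equals the irregularity $q(\widetilde V)$. But $\widetilde V$ is rational, being obtained from the rational surface $V$ (itself a composite of blow-ups of $\SP^2$) by a composite of blow-downs, so $q(\widetilde V)=0$ and $B\cong\SP^1$. This produces the fibration $\widetilde p\colon\widetilde V\to\SP^1$ with general fibre $F\cong\SP^1$ and $\widetilde D\cdot F=2$, as required.

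As indicated, the genuinely delicate step is the first one: extracting a \emph{morphism} on the precise model $\widetilde V$ cut out by the contraction rules (1)--(5). If one does not invoke the general structure theorem, this forces an explicit combinatorial study of $\widetilde D$ --- tracking which $(-1)$-curves satisfying the five conditions actually occur and what configuration survives --- and I expect that bookkeeping, rather than any conceptual difficulty, to be where most of the effort goes.
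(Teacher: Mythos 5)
Your argument is correct and is essentially the paper's: the paper simply invokes Kawamata's structure theorem \cite[Theorem 2.3]{ka:cls} for the strictly minimal pair with $\KB=1$ together with the affineness of $\widetilde{V}\setminus\widetilde{D}$ to obtain a fibration with general fiber $\SP^1$ and $\widetilde{D}F=2$, then uses $q(\widetilde{V})=0$ to identify the base with $\SP^1$, exactly as you do. The ``delicate step'' you isolate --- that on the strictly minimal model the log Iitaka fibration is an honest morphism whose general fiber is $\SC^{\ast}$ or elliptic, with the elliptic case excluded by affineness --- is precisely the content of that cited theorem, so no further bookkeeping is required.
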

\begin{proof}
By \cite[Theorem 2.3]{ka:cls}
and the fact that $\widetilde{V}\setminus \widetilde{D}$ is affine,
there exists a fibration $\widetilde{p}:\widetilde{V}\rightarrow W$
over a smooth curve $W$
whose general fiber $F$ is $\SP^1$ and $\widetilde{D} F=2$.
Since $q(\widetilde{V})=0$, the curve $W$ must be $\SP^1$.
\end{proof}

The fibration $\widetilde{p}$ is obtained from
a $\SP^1$-bundle $\hat{p}:\Sigma\rightarrow\SP^1$
by successive blow-ups $\widetilde{\pi}:\widetilde{V}\rightarrow\Sigma$.
Putting $p=\widetilde{p}\circ\pi$,
we have the following commutative diagram.
\begin{center}
\includegraphics{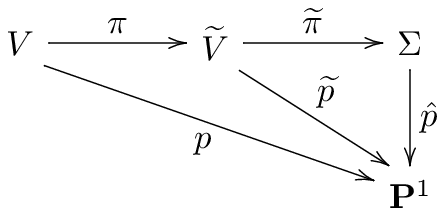}
\end{center}

Following \cite{fz:def}, we use the following terminology.
The triple $(\widetilde{V},\widetilde{D},\widetilde{p})$
is called a \emph{$\SC^{\ast}$-triple}.
A component of $\widetilde{D}$ is called \emph{horizontal} if
the image of it under $\widetilde{p}$ is 1-dimensional.
Let $\widetilde{H}$ be
the sum of the horizontal components of $(\widetilde{V},\widetilde{D},\widetilde{p})$.
The $\SC^{\ast}$-triple $(\widetilde{V},\widetilde{D},\widetilde{p})$
is called of \emph{twisted type}
if $\widetilde{H}$ is irreducible;
otherwise it is called of \emph{untwisted type}.
A fiber of $\widetilde{p}$ is called a \emph{full fiber}
of $(\widetilde{V},\widetilde{D},\widetilde{p})$ if it is contained in $\widetilde{D}$.
Let $f$ denote
the number of the full fibers of $(\widetilde{V},\widetilde{D},\widetilde{p})$.
\begin{lemma}\label{lem:a1}
The $\SC^{\ast}$-triple has the following properties.
\begin{enumerate}
\item[\textnormal{(i)}]
The $\SC^{\ast}$-triple is of untwisted type.
\item[\textnormal{(ii)}]
We have $f\le 1$.
The fibration $\widetilde{p}$ has at least two singular fibers.
\item[\textnormal{(iii)}]
The weighted dual graph of a singular fiber of $\widetilde{p}$ is a linear chain $[A,1,B]$,
where $A$, $B$ are admissible and
are connected components of $\widetilde{D}-\widetilde{H}$.
The curve $\widetilde{H}$
intersects only the first vertex of $A$ and the last of $B$.
\end{enumerate}
\end{lemma}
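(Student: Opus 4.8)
The plan is to apply the structure theory of $\SC^{\ast}$-triples of \cite{fz:def} to $(\widetilde V,\widetilde D,\widetilde p)$, using throughout that $\widetilde D$ is a connected tree of smooth rational curves. The latter holds because $D=\sigma^{-1}(C)$ is such a tree -- $C$ is rational and the minimal embedded resolution of each cusp contributes a tree of rational curves, the two trees being disjoint and each meeting $C'$ in one point -- and none of the contractions $(1)$--$(5)$ defining $\pi$ destroys it: a blow-down of type $(1)$--$(3)$ contracts a $(-1)$-curve of $\widetilde D$ with at most two neighbours in $\widetilde D$, a blow-down of type $(4)$--$(5)$ contracts a $(-1)$-curve meeting $\widetilde D$ in at most one point, and such contractions send trees to trees. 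I also record that, by minimality of the triple, every singular fibre of $\widetilde p$ contains a component of $\widetilde D$.

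First I would identify $\widetilde H$: a general fibre $F$ is irreducible and is not a component of $\widetilde D$, so the two points of $F\cap\widetilde D$ lie on horizontal components, whence $\widetilde H\cdot F=2$; thus $\widetilde H$ is either irreducible with $\widetilde H\cdot F=2$ (twisted type) or a union of two cross-sections (untwisted type). To prove (i) I would rule out the twisted case. If $\widetilde H$ is irreducible it is a smooth rational bisection, so $\widetilde p|_{\widetilde H}\colon\widetilde H\to\SP^1$ has degree $2$ and, by Riemann--Hurwitz, exactly two branch points. Also $\widetilde D$ must contain a vertical component -- otherwise $\widetilde D=\widetilde H$, $\widetilde p$ has no singular fibre, $\widetilde V$ is a $\SP^1$-bundle, and $\widetilde V\setminus\widetilde D$ is a $\SP^1$-bundle minus a bisection, forcing $\KB\le 0$ against $\KB=1$. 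Fix such a component; it lies in some fibre $F_b$, and since $\widetilde D$ is connected while vertical components of distinct fibres are disjoint, it is joined to $\widetilde H$ by a path inside $F_b\cup\widetilde H$. Chasing the intersections of $\widetilde H$ with $F_b$ -- two distinct points when $b$ is not a branch point, a single node of $F_b$ carrying at most one $\widetilde D$-branch (by the SNC condition) when it is -- through the connected vertical part of $\widetilde D$ in $F_b$ then closes up a cycle in $\widetilde D$, a contradiction; hence the triple is of untwisted type. \emph{I expect this to be the main obstacle}: making the cycle argument airtight requires a careful case analysis of the singular fibres.

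Granting (i), write $\widetilde H=H_1+H_2$ with $H_1,H_2$ cross-sections. If there were two full fibres $F_1,F_2\subset\widetilde D$, then $H_1$ and $H_2$ would each meet both $F_1$ and $F_2$, producing a cycle in the tree $\widetilde D$; hence $f\le 1$. If $\widetilde p$ had at most one singular fibre, then -- away from singular fibres $\widetilde p$ is a $\SP^1$-bundle and $H_1,H_2$ exhaust $\widetilde H$ -- the surface $\widetilde V\setminus\widetilde D$ would be, up to a single fibre, a $\SC^{\ast}$-bundle over $\SP^1$ or over an affine line, with $\KB\le 0$; so $\widetilde p$ has at least two singular fibres. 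This proves (ii).

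For (iii) I would deduce the shape of a singular fibre $F_0$ from the minimality of the triple: a $(-1)$-curve of $F_0$ not in $\widetilde D$ meets $\widetilde D$ in at least two points (else contractible by $(4)$--$(5)$), and a $(-1)$-curve of $F_0$ lying in $\widetilde D$ has at least three neighbours in $\widetilde D$ (else contractible by $(1)$--$(3)$). Since $F_0$ is a tree of rational curves that contracts to a point, with $\widetilde H\cdot F_0=2$ and the two intersection points lying on $H_1$ and $H_2$, an induction on the number of components of $F_0$ -- using once more that $\widetilde D$ is a tree -- forces $F_0=[A,1,B]$ with the middle curve a $(-1)$-curve not in $\widetilde D$, with $A,B$ admissible connected components of $\widetilde D-\widetilde H$, and with $\widetilde H$ meeting only the first vertex of $A$ and the last of $B$. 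This is the fibrewise part of the $\SC^{\ast}$-triple classification of \cite{fz:def}, which I would cite or reprove directly in this elementary setting.
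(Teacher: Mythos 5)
The paper disposes of this lemma by citation: part (i) is exactly \cite[Theorem 3]{kiz} (Kizuka's theorem on rational functions of $\SC^{\ast}$-type on $\SP^2$), and parts (ii), (iii) are quoted from \cite[Lemma 4.4, Theorems 5.8 and 5.11]{fz:def}. You instead try to reprove everything from the tree structure of $\widetilde{D}$, and the part you yourself flag as the main obstacle --- ruling out the twisted type --- is where the argument genuinely breaks. Your cycle argument assumes that the connected component $T$ of $\widetilde{D}-\widetilde{H}$ sitting inside a fibre $F_b$ must absorb both points of $\widetilde{H}\cap F_b$, so that $T+\widetilde{H}$ closes up a loop. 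But singular fibres need not be contained in $\widetilde{D}$: one of the two intersection points of the bisection $\widetilde{H}$ with $F_b$ can perfectly well lie on a fibre component that is \emph{not} in $\widetilde{D}$, in which case $T$ meets $\widetilde{H}$ only once and no cycle appears. This is not a repairable bookkeeping issue: twisted $\SC^{\ast}$-fibrations do occur on $\SQ$-acyclic surfaces whose boundary is an SNC tree (they appear in the Flenner--Zaidenberg classification alongside the untwisted ones), so no purely combinatorial argument on $\Gamma(\widetilde{D})$ can exclude them. What excludes them here is that the fibration descends from a pencil on $\SP^2$, and that is precisely the content of Kizuka's theorem, which you would need to invoke or reprove.

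Your part (ii) is essentially sound: the $f\le 1$ argument via the cycle $H_1$--$F_1$--$H_2$--$F_2$ is the right one, and the ``at least two singular fibres'' claim follows from $\KB=1$ as you indicate (this is \cite[Lemma 4.4]{fz:def}). In part (iii) you correctly extract from strict minimality that a $(-1)$-curve of a fibre outside $\widetilde{D}$ meets $\widetilde{D}$ at least twice and one inside $\widetilde{D}$ has at least three neighbours, but the promised induction still owes two facts you do not establish: that every fibre component outside $\widetilde{D}$ is a $(-1)$-curve, and that the fibre is a \emph{linear} chain with a single $(-1)$-curve rather than a branched tree. Both are part of the fibre classification in \cite{fz:def} that the paper cites. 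So the proposal is not an acceptable self-contained replacement for the paper's proof; it would be fine if rewritten to cite Kizuka for (i) and Flenner--Zaidenberg for the fibre structure, which is exactly what the paper does.
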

\begin{proof}
By \cite[Theorem 3]{kiz}, the $\SC^{\ast}$-triple is of untwisted type.
The assertions (ii), (iii)
follow from \cite[Lemma 4.4, Theorem 5.8 and 5.11]{fz:def}.
\end{proof}

Lemma~\ref{lem:pi} and
the assertion (i) of the following proposition
show the ``if'' part of Theorem~\ref{thm0}.
\begin{proposition}\label{prop:unt}
The following assertions hold.
\begin{enumerate}
\item[\textnormal{(i)}]
We have $\widetilde{H}=\widetilde{D}_{0}^{(1)}+\widetilde{D}_{0}^{(2)}$.
The curve $\widetilde{C}'$ is a full fiber of $\widetilde{p}$.
\item[\textnormal{(ii)}]
The fibration $\widetilde{p}$ has exactly two singular fibers
$\widetilde{F}_1=\widetilde{D}_{1}^{(1)}+\widetilde{E}_1+\widetilde{D}_{a}^{(2)}$,
$\widetilde{F}_2=\widetilde{D}_{2}^{(1)}+\widetilde{E}_2+\widetilde{D}_{b}^{(2)}$,
where $\{a,b\}=\{1,2\}$ and $\widetilde{E}_i$ is the ($-1$)-curve in $\widetilde{F}_i$.
\end{enumerate}
\end{proposition}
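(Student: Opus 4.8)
The plan is to read off both assertions from the classification of the fibres of $\widetilde p$ given in Lemma~\ref{lem:a1}, after first locating $\widetilde D_0^{(1)}$, $\widetilde D_0^{(2)}$ and $\widetilde C'$ relative to $\widetilde p$. I would first show that $\widetilde D_0^{(1)},\widetilde D_0^{(2)}$ are horizontal, so that $\widetilde H=\widetilde D_0^{(1)}+\widetilde D_0^{(2)}$. By Lemma~\ref{lem:pi} the morphism $\pi$ is an isomorphism in a neighbourhood of $C'$, so the curve $D_0^{(k)}$, which meets $C'$, is not contracted by $\pi$, and neither are any curves meeting it along $C'$. Combining this with the facts that $D_0^{(k)}$ is a branch point of $D$, that every component of $D$ other than $C',D_0^{(1)},D_0^{(2)}$ has self-intersection $\le-2$ (Lemma~\ref{lem:cres}, Proposition~\ref{prop:cres}), and the explicit list of $(-1)$-curves that $\pi$ may contract, one checks that $\pi$ does not contract the whole of $D_1^{(k)}$ or of $D_2^{(k)}$ either; hence $\widetilde D_0^{(k)}$ is still a branching component of $\widetilde D$. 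Now every branching component of $\widetilde D$ is horizontal: a vertical one would lie inside the admissible chain $A$ (or $B$) of a singular fibre $[A,1,B]$ as in Lemma~\ref{lem:a1}(iii), so it would have valency at most $2$ inside that fibre and could reach valency $3$ only by being the end vertex of $A$ meeting both components of $\widetilde H$; but $\widetilde H\cdot F=2$, so then the other end vertex, lying in $B$, would meet no component of $\widetilde H$, making the nonempty chain $B$ a connected component of $\widetilde D$ disjoint from everything else, contradicting connectedness of $\widetilde D$. Since the $\SC^{\ast}$-triple is untwisted, $\widetilde H$ has exactly two components, so $\widetilde H=\widetilde D_0^{(1)}+\widetilde D_0^{(2)}$.

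Next I would show $\widetilde C'$ is a full fibre. First, $C'$ is not contracted by $\pi$: if it were, then $(C')^2=-1$ by Lemma~\ref{lem:pi}, $\widetilde C'=0$, and contracting $C'$ makes $\widetilde D_0^{(1)}$ meet $\widetilde D_0^{(2)}$ while both remain branching, hence, by the previous step, horizontal; the resulting $\SC^{\ast}$-triple would then have no full fibre and only singular fibres $[A,1,B]$ built from the at most four blocks $\widetilde D_j^{(k)}$, which is incompatible with $\KB(\widetilde V\setminus\widetilde D)=1$. So $\widetilde C'\ne0$. Since $\pi$ is an isomorphism near $C'$, $\widetilde C'$ meets only $\widetilde D_0^{(1)}$ and $\widetilde D_0^{(2)}$ and so has valency $2$ in $\widetilde D$; it is therefore not branching, hence not a component of $\widetilde H$, hence vertical, lying in a single fibre $F_0$. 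If $F_0$ were singular, $\widetilde C'$ would be a vertex of the chain $[A,1,B]$ meeting both components $\widetilde D_0^{(1)},\widetilde D_0^{(2)}$ of $\widetilde H$, and the end-of-chain argument above would again give a contradiction. Hence $F_0$ is irreducible, $\widetilde C'=F_0$ is a smooth full fibre, and $(\widetilde C')^2=0$; as $C'$ is not contracted this gives $(C')^2=0$, which together with Lemma~\ref{lem:pi} completes the ``if'' part of Theorem~\ref{thm0} and proves~(i).

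For~(ii): since $\widetilde C'$ is a full fibre, $f\ge1$, hence $f=1$ by Lemma~\ref{lem:a1}(ii), and $\widetilde p$ has exactly two singular fibres, each of the form $[A,1,B]$ with $A,B$ admissible connected components of $\widetilde D-\widetilde H=\widetilde D-\widetilde D_0^{(1)}-\widetilde D_0^{(2)}$. These components, other than $\widetilde C'$ itself (the full fibre already accounted for), are $\widetilde D_1^{(1)},\widetilde D_2^{(1)},\widetilde D_1^{(2)},\widetilde D_2^{(2)}$, and since $\widetilde D_j^{(k)}$ meets $\widetilde H$ only at $\widetilde D_0^{(k)}$, the two halves of each singular fibre must lie one over each cusp; this forces the two fibres to be $\widetilde F_1=\widetilde D_1^{(1)}+\widetilde E_1+\widetilde D_a^{(2)}$ and $\widetilde F_2=\widetilde D_2^{(1)}+\widetilde E_2+\widetilde D_b^{(2)}$ with $\{a,b\}=\{1,2\}$ and $\widetilde E_i$ the $(-1)$-curve of $\widetilde F_i$, as claimed. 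The main obstacle is the careful bookkeeping about $\pi$ in the first two steps --- that $\pi$ preserves the branch points $\widetilde D_0^{(k)}$, does not merge or annihilate the blocks $\widetilde D_j^{(k)}$, and in particular does not contract $C'$ --- which requires simultaneous use of the explicit cusp-resolution data of Lemma~\ref{lem:cres} and Proposition~\ref{prop:cres} and of the precise list of contractions available to $\pi$.
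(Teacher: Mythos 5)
Your overall outline (identify $\widetilde{H}=\widetilde{D}_0^{(1)}+\widetilde{D}_0^{(2)}$, rule out the contraction of $C'$, conclude that $\widetilde{C}'$ is a full fiber, and then read off (ii) from Lemma~\ref{lem:a1}) matches the paper's, and your deduction of (ii) from (i) is fine. However, the two steps you flag as "one checks" and "which is incompatible with $\KB(\widetilde V\setminus\widetilde D)=1$" are exactly where the real work lies, and as written both are gaps. The more serious one is the second: nothing you have quoted forbids a $\SC^{\ast}$-triple with $\KB=1$, no full fiber, and two singular fibers of type $[A,1,B]$ — indeed Lemma~\ref{lem:a1}(ii) explicitly allows $f=0$ — so "incompatible with $\KB=1$" needs a proof you have not supplied. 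The paper's contradiction at this point is not a Kodaira-dimension count but a concrete geometric one: if $C'$ is contracted, the two horizontal components $\widetilde{D}_0^{(1)}$, $\widetilde{D}_0^{(2)}$ meet at the image point of $C'$, and the fiber of $\widetilde{p}$ through that point has a strict transform $F$ in $V$ meeting $D$ only along $C'$; hence $\sigma(F)$ would be a plane curve with $\sigma(F)^2<0$, which is absurd. Some such argument (or a genuine computation of $\KB$ for the hypothetical configuration) is indispensable.

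The first gap is your reliance on $\widetilde{D}_0^{(k)}$ being a \emph{branching} component of $\widetilde{D}$. This requires that $\pi$ contracts neither $D_1^{(k)}$ nor $D_2^{(k)}$ entirely, and that is not obvious: although every component of these branches has self-intersection $\le -2$ initially, contractions of type (4)/(5) (curves not in $D$) can promote them to $(-1)$-curves, after which types (1)--(3) can eat into the branch from its free end. The paper avoids having to exclude this. It uses only that $D_0^{(k)}$ itself is not contracted (Lemma~\ref{lem:pi}), so $(\widetilde{D}_0^{(k)})^2\ge -1$, while by Lemma~\ref{lem:a1}(iii) every vertical component of $\widetilde{D}$ that is not a full fiber sits in an admissible chain and so has self-intersection $\le -2$; hence $\widetilde{D}_0^{(k)}$ is horizontal or a full fiber, and the full-fiber alternative is excluded by a self-intersection count (if a branch is contracted onto $\widetilde{D}_0^{(k)}$ its self-intersection exceeds $0$) respectively by the observation that $\widetilde{C}'$ would then be horizontal and $\widetilde{p}$ would have at most one singular fiber, contradicting Lemma~\ref{lem:a1}(ii). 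I recommend replacing your valency bookkeeping by this self-intersection argument; it is both shorter and robust against partial contraction of the branches.
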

\begin{proof}
We first show that $\pi$ does not contract $C'$.
Assume the contrary.
Since $(\widetilde{D}_{0}^{(1)})^2 \ge 0$,
$\widetilde{D}_{0}^{(1)}$ is either a horizontal component or a full fiber.
Assume $\widetilde{D}_{0}^{(1)}$ is a full fiber.
Since $(\widetilde{D}-\widetilde{D}_{0}^{(1)})\widetilde{D}_{0}^{(1)}=2$,
one of $D_{1}^{(1)}$ or $D_{2}^{(1)}$ is contracted by $\pi$ to
a point on $\widetilde{D}_{0}^{(1)}$.
Thus we have $(\widetilde{D}_{0}^{(1)})^2>0$, which is a contradiction.
Similarly, $\widetilde{D}_{0}^{(2)}$ is not a full fiber.
Thus $\widetilde{H}=\widetilde{D}_{0}^{(1)}+\widetilde{D}_{0}^{(2)}$.
Let $\widetilde{F}$ be the fiber of $\widetilde{p}$ passing through
the point of intersection of 
$\widetilde{D}_{0}^{(1)}$ and $\widetilde{D}_{0}^{(2)}$.
The strict transform $F$ of $\widetilde{F}$ in $V$
intersects only $C'$ among the irreducible components of $D$.
Hence $\sigma(F)$ is a plane curve with $\sigma(F)^2<0$,
which is a contradiction.
Thus $\pi$ does not contract $C'$.

Since $(\widetilde{D}_{0}^{(1)})^2 \ge -1$,
$\widetilde{D}_{0}^{(1)}$ is either a horizontal component or a full fiber.
Suppose that $\widetilde{D}_{0}^{(1)}$ is a full fiber.
Then $\widetilde{C}'$ must be a horizontal component.
This means that $\widetilde{p}$
has at most one singular fiber, which contradicts Lemma~\ref{lem:a1}.
Thus $\widetilde{D}_{0}^{(1)}$ is a horizontal component.
Similarly, $\widetilde{D}_{0}^{(2)}$ is a horizontal component.
Hence $\widetilde{C}'$ must be a full fiber of $\widetilde{p}$.
The assertion (ii) follows from (i) and Lemma~\ref{lem:a1}.
\end{proof}

We prove the remaining assertions of Theorem~\ref{thm0}.
Let $C$ be a rational bicuspidal plane curve.
Suppose $(C')^2\ge 0$.
Since $\dim |C'|=1+(C')^2$,
it follows that $\SP^2\setminus C$ contains
a surface $\SC^{\ast}\times B$, where $B$ is a curve.
Hence we have $\KB(\SP^2\setminus C)\le 1$.
By \cite{Wak}, $\KB(\SP^2\setminus C)\ge 0$.
By \cite[Proposition 1]{Ts},
$\KB(\SP^2\setminus C)\ge 1$.
See also \cite{ko,or}.
Hence we have $\KB(\SP^2\setminus C)=1$ and $(C')^2=0$.
%
%
%%%%%%%%%%%%%%%%%%%%%%%%%%%%%%%%%%%%%%%%%%%%%%%%%%%%%%%%%%%%%%%%%%%%%%%%%%%%%%%
\section{Proof of Theorem~\ref{thm1}}\label{sec:str}
%%%%%%%%%%%%%%%%%%%%%%%%%%%%%%%%%%%%%%%%%%%%%%%%%%%%%%%%%%%%%%%%%%%%%%%%%%%%%%%
Let $C$ be a rational bicuspidal plane curve.
Let $P_1,P_2$ denote the cusps of $C$.
Let $\sigma:V\rightarrow\SP^2$ be the minimal embedded resolution
of the cusps.
Let $C'$, $D$, etc.~have the same meaning as in
the first paragraph of the previous section.
Assume that $(C')^2=-1$.
Put $F_0'=D^{(1)}_0$.
Let $\sigma':V\rightarrow V'$ be the contraction of $C'$.
Since $(F_0')^2=0$ on $V'$,
there exists a $\SP^1$-fibration $p':V'\rightarrow\SP^1$
such that $F_0'$ is a nonsingular fiber.
Put $p=p'\circ\sigma':V\rightarrow\SP^1$
and $F_0=F_0'+C'$.
\begin{remark*}
Since $(D^{(2)}_0)^2=0$ on $V'$,
there exists another $\SP^1$-fibration
such that $D^{(2)}_0$ is a nonsingular fiber.
\end{remark*}
The surface $X=V\setminus D$ is a $\SQ$-homology plane.
Namely $h^i(X,\SQ)=0$ for $i>0$.
A general fiber of $p|_{X}$ is a curve
$\SC^{\ast\ast}=\SP^1\setminus\{3\text{ points}\}$.
Such fibrations have already been classified in \cite{misu}.
We will use their result to prove our theorem.
There exists a birational morphism $\varphi:V\rightarrow\Sigma_n$
from $V$ onto the Hirzebruch surface $\Sigma_n$ of degree $n$ for some $n$
such that $p\circ\varphi^{-1}:\Sigma_n\rightarrow\SP^1$ is a $\SP^1$-bundle.
The morphism $\varphi$ is the composite of the successive contractions
of the ($-1$)-curves in the singular fibers of $p$.
Let $S_1$ and $S_3$ be the irreducible components of $A^{(1)}_{g_1}+B^{(1)}_{g_1}$
meeting with $D^{(1)}_0$.
Put $S_2=D^{(2)}_0$.
The curves $S_1$, $S_2$ and $S_3$ are 1-sections of $p$.
The divisor $D$ contains no other sections of $p$.
\begin{lemma}\label{lem:sec}
We may assume that $\varphi(S_1+S_2+S_3)$ is smooth.
We have
$\varphi(S_1)\sim \varphi(S_2)\sim \varphi(S_3)$ (linearly equivalent) and
$n=\varphi(S_i)^2=0$ for each $i$.
\end{lemma}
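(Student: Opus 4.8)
The plan is to reduce the statement to the elementary geometry of sections of a Hirzebruch surface, using the classification of $\SC^{\ast\ast}$-fibrations on $\SQ$-homology planes of \cite{misu} only to control the singular fibers of $p|_X$. First I would observe that $S_1,S_2,S_3$ are already pairwise disjoint on $V$. Indeed $D=\sigma^{-1}(C)$ is an SNC-divisor whose dual graph is a tree: it consists of the two trees $\sigma^{-1}(P_1)$ and $\sigma^{-1}(P_2)$ of Lemma~\ref{lem:cres}, joined by the single edge $C'$, which meets $\sigma^{-1}(P_k)$ only in $D_0^{(k)}$. Since $S_2=D_0^{(2)}$ lies in $\sigma^{-1}(P_2)$ while $S_1$ and $S_3$ lie in $\sigma^{-1}(P_1)$, the curve $S_2$ meets neither of them; and $S_1,S_3$ are the two components of $A_{g_1}^{(1)}+B_{g_1}^{(1)}$ adjacent to the branching vertex $D_0^{(1)}$, hence lie in different branches of the tree and are non-adjacent, so disjoint. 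Thus $S_1+S_2+S_3$ is smooth on $V$, and, as already noted, each $S_i$ is a $1$-section of $p$.

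Next I would choose $\varphi$ so as to preserve this disjointness. Since $\varphi:V\to\Sigma_n$ contracts only $(-1)$-curves lying in singular fibers of $p$, each $\varphi(S_i)$ is again a $1$-section of the $\SP^1$-bundle $\Sigma_n\to\SP^1$, and the disjointness of the $\varphi(S_i)$ can fail only if at some stage one is forced to contract a $(-1)$-curve meeting two of the current images. A section meeting two distinct components of a reducible fiber would have intersection number at least $2$ with that fiber, so each section meets exactly one component of each singular fiber; the only dangerous case is therefore that two of the sections meet the same component, at distinct points, and that component has to be contracted. To exclude this I would apply the classification of $\SC^{\ast\ast}$-fibrations on $\SQ$-homology planes from \cite{misu} to $p|_X:X\to\SP^1$, whose general fiber is $\SC^{\ast\ast}=F\setminus\{S_1\cap F,\,S_2\cap F,\,S_3\cap F\}$; it pins down the possible singular fibers together with the loci of $S_1,S_2,S_3$ on them, and from that description one reads off that every singular fiber contains a $(-1)$-curve meeting at most one of $S_1,S_2,S_3$. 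Contracting such curves first and iterating gives a $\varphi$ with $\varphi(S_1),\varphi(S_2),\varphi(S_3)$ pairwise disjoint, i.e. $\varphi(S_1+S_2+S_3)$ smooth. This verification is the step I expect to be the main obstacle; the rest is then formal.

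Finally, with the three images pairwise disjoint on $\Sigma_n$, write $\varphi(S_i)\equiv C_0+a_if$, where $f$ is a fiber and $C_0$ a fixed section with $C_0^2=-n$, so that $\varphi(S_i)^2=-n+2a_i$ and $\varphi(S_i)\varphi(S_j)=-n+a_i+a_j$. If $n\ge1$, then $C_0$ is the unique section of negative self-intersection, so at most one of the $\varphi(S_i)$ equals $C_0$; say $\varphi(S_2)$ and $\varphi(S_3)$ are irreducible sections different from $C_0$, whence $a_2,a_3\ge n$ and $\varphi(S_2)\varphi(S_3)=-n+a_2+a_3\ge n>0$, contradicting disjointness. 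Hence $n=0$ and $\Sigma_n\cong\SP^1\times\SP^1$, so that $a_i\ge0$ for each $i$; now $\varphi(S_i)\varphi(S_j)=a_i+a_j=0$ forces $a_1=a_2=a_3=0$, whence $\varphi(S_1)\sim\varphi(S_2)\sim\varphi(S_3)$ and $\varphi(S_i)^2=0$, as required.
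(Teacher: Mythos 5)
Your preliminary observations are sound: $S_1+S_2+S_3$ is indeed already smooth on $V$ (the tree structure of $D$ gives pairwise disjointness), and your deduction of the second assertion from the first is correct and complete --- the computation on $\Sigma_n$ showing that at most one of three pairwise disjoint irreducible sections can be the negative section when $n\ge 1$, hence $n=0$ and then $a_1=a_2=a_3=0$, is exactly the part the paper omits (its proof says ``we only prove the first assertion''). The trouble is that the first assertion is where the content of the lemma lies, and you have not proved it. You reduce it to the claim that, at every stage of the contraction of every singular fiber of $p$, there is a $(-1)$-curve meeting at most one of the current images of $S_1,S_2,S_3$, to be read off from the classification of singular fibers of $\SC^{\ast\ast}$-fibrations in \cite{misu}; you flag this as ``the main obstacle'' and do not carry it out. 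Two things are missing: (a) the actual case check against \cite[Lemma 1.5]{misu} --- which in the paper is invoked only \emph{after} Lemma~\ref{lem:sec}, and only becomes manageable after one first uses $\KB=2$ and \cite{mits2} to cut the list down to types $(\mathrm{I_1})$ and $(\mathrm{III_1})$, so the input you need is itself nontrivial to assemble at this point; and (b) the persistence of the property under iteration, since after earlier contractions two of the sections can migrate onto a common component, and one must verify that such a component is never forced to be contracted while carrying both of them. As written, the heart of the lemma is asserted rather than proved.

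For comparison, the paper avoids all fiber-by-fiber bookkeeping by repairing an arbitrary $\varphi$ a posteriori with elementary transformations: if $P$ is a singular point of $\varphi(S_1+S_2+S_3)$, then, since $S_1+S_2+S_3$ is smooth on $V$, the point $P$ is a center of one of the blow-ups inverse to $\varphi$, so one may write $\varphi=\phi_1\circ\varphi'$ with $\phi_1$ the blow-up at $P$, and then replace the contraction $\phi_1$ by the contraction of the strict transform of the fiber of $p\circ\varphi^{-1}$ through $P$ (a $(-1)$-curve in the corresponding singular fiber). This produces a new birational morphism to a Hirzebruch surface which strictly improves the singularity at $P$, and iterating resolves it. If you wish to keep your route you must supply (a) and (b); otherwise the elementary-transformation argument is the shorter and more robust one.
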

\begin{proof}
We only prove the first assertion.
Suppose $\varphi(S_1+S_2+S_3)$ has a singular point $P$.
Let $\phi_1$ be the blow-up at $P$.
Since $S_1+S_2+S_3$ is smooth on $V$,
we can arrange the order of the blow-ups of $\varphi$
so that $\varphi=\phi_1\circ\varphi'$.
Let $F'$ be the strict transform via $\phi_1$
of the fiber of $p\circ\varphi^{-1}$ passing through $P$.
Let $\phi_2$ be the contraction of $F'$.
Since $F'$ is an irreducible component of
a singular fiber of $p\circ{\varphi'}^{-1}$,
we can replace $\varphi$ with $\phi_2\circ\varphi'$.
We infer that $P$ can be resolved by repeating the above process.
Hence we may assume that $\varphi(S_1+S_2+S_3)$ is smooth.
\end{proof}

Let $F_1,\ldots,F_l$ be all singular fibers of $p$ other than $F_0$.
For $i=1,\ldots,l$,
let $E_i$ be the sum of the irreducible components of $F_i$
which are not components of $D$.
Since $D$ contains no loop, each $E_i$ is not empty.
It follows that
the base curve of the $\SC^{\ast\ast}$-fibration $p|_X$ is $\SC$.
Because $\KB(V\setminus D)=2$,
each irreducible component of $E_i$ meets with $D$ in at least two points
by \cite[Main Theorem]{mits2}.
In \cite[Lemma 1.5]{misu},
singular fibers of a $\SC^{\ast\ast}$-fibration
with three 1-sections
were classified into several types.
Among them,
only singular fibers
of type  ($\mathrm{I_1}$) and ($\mathrm{III_1}$)
satisfy the conditions that
each irreducible component of $E_i$ meets with $D$ in at least two points.
From the fact that $D$ contains no loop,
we infer that
each $F_i$ is of type ($\mathrm{III_1}$).
By \cite[Lemma 2.3]{misu},
$p$ has at most two singular fibers other than $F_0$.
Since $S_2$ meets with $D-S_2$ in three points,
$p$ has exactly three singular fibers $F_0$, $F_1$ and $F_2$.
For $i=1,2$,
the dual graph of $F_i+S_1+S_2+S_3$ coincides with
one of those in Figure~\ref{figsf},
where $\ast$ denotes a ($-1$)-curve and $E_i=E_{i1}+E_{i2}$.
The graph $T_{i,j}$ may be empty for each $j$.
\begin{figure}
\begin{center}
\includegraphics{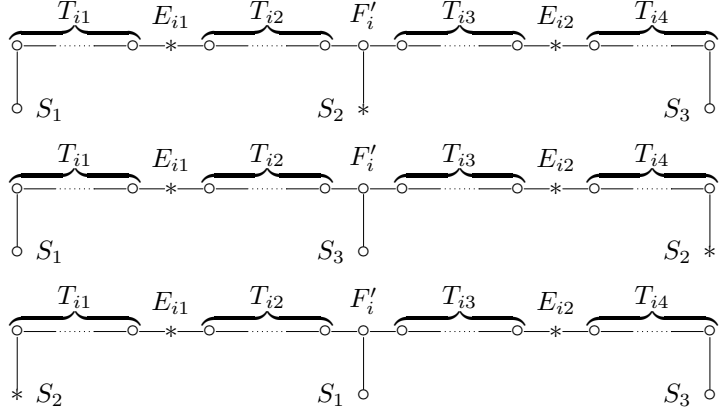}
\end{center}
\caption{Candidates for the dual graph of $F_i$, $i=1,2$}\label{figsf}
\end{figure}
\begin{lemma}\label{lem:fib}
We have $\varphi(F_i)=\varphi(F_i')$ for $i=0,1,2$.
For $i=1,2$,
the dual graph of $F_i+S_1+S_2+S_3$ must be
the first one in Figure~\ref{figsf}.
\end{lemma}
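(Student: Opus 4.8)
The plan is to establish the statement in two parts, first identifying $\varphi(F_i)$ with $\varphi(F_i')$, and then ruling out all but the first dual graph in Figure~\ref{figsf}.

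For the first part, recall that $F_i' = (\sigma')_*(F_i)$ is a fiber of the $\SP^1$-bundle $p'\colon V'\to\SP^1$; since $C'$ is the only curve contracted by $\sigma'$ and $C'$ lies in $F_0$, we have $F_i = (\sigma')^{-1}(F_i')$ for $i=1,2$ with no change, and $F_0 = F_0' + C'$ as already set up. The morphism $\varphi\colon V\to\Sigma_n$ contracts exactly the $(-1)$-curves appearing successively in the singular fibers of $p$. So $\varphi(F_i)$ is a fiber of $p\circ\varphi^{-1}$, i.e. a smooth rational curve of self-intersection $0$. The point is that $\varphi$, restricted to the fiber $F_i$, contracts all of $F_i$ except one component, and by Lemma~\ref{lem:sec} we have arranged that $\varphi$ does not touch $S_1+S_2+S_3$; in particular the section components meeting $F_i$ survive, and the surviving component of $F_i$ is the one through which the resulting fiber passes. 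I would track which component of $F_i$ survives under $\varphi$ using the candidate graphs: in each candidate, the sections $S_1,S_2,S_3$ attach at specified vertices, and contracting the $(-1)$-curves repeatedly forces a unique surviving component, which is precisely the strict transform of $F_0'$'s counterpart, giving $\varphi(F_i)=\varphi(F_i')$.

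For the second part, the goal is to exclude all candidate graphs in Figure~\ref{figsf} except the first. The key constraint is that $\varphi$ must not contract any component of $S_1+S_2+S_3$ and must turn $F_i$ into a $0$-curve, while simultaneously (by Lemma~\ref{lem:sec}) the three sections all have image self-intersection $0$ on $\Sigma_n$ with $n=0$. In each of the non-first candidates, I expect that running the contraction of $(-1)$-curves either eventually forces the blow-down of a section component, or produces a section with negative (or positive) image self-intersection, contradicting Lemma~\ref{lem:sec}; alternatively, a Picard-number/discriminant count shows the configuration cannot be resolved to a $\SP^1$-bundle with the sections in the required position. Concretely, I would compute, for each candidate, the effect on $S_1^2,S_2^2,S_3^2$ of contracting the chain of $(-1)$-curves all the way down: in the first graph these three self-intersections can all be brought to $0$ simultaneously, but in each other graph at least one section is forced to acquire the wrong self-intersection (or must itself be blown down), and that is the contradiction.

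The main obstacle is the case analysis over the candidate graphs: one must carry out the contraction process explicitly for each of the graphs $T_{i,1},T_{i,2},\ldots$ (which may be empty), keeping track of how the $(-1)$-curves propagate and how the section self-intersections change, and verify that only the first configuration is consistent with $n=0$ and with $\varphi$ leaving $S_1+S_2+S_3$ untouched. The bookkeeping is routine blow-down arithmetic, but there are several shapes to check and the admissibility of the chains $T_{i,j}$ (hence $E_{ij}\cdot D\ge 2$ for every component, coming from $\KB=2$ via \cite[Main Theorem]{mits2}) must be used at each step to see that the contraction cannot start inside a $T_{i,j}$ before the distinguished $(-1)$-curves are removed. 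Once that is organized, the conclusion that the dual graph of $F_i+S_1+S_2+S_3$ is the first one in Figure~\ref{figsf}, and that $\varphi(F_i)=\varphi(F_i')$, follows.
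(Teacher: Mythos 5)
There is a genuine gap, and it starts with a misidentification of the object $F_i'$. For $i=1,2$ the curve $F_i'$ is \emph{not} $(\sigma')_*(F_i)$ (which is just the singular fiber $F_i$ itself, since $C'\subset F_0$); it is a specific irreducible component of $F_i$, namely the one marked in Figure~\ref{figsf} whose negative self-intersection is later called $f_i$, and which in the first candidate graph is the branch component adjacent to $S_2$. The assertion $\varphi(F_i)=\varphi(F_i')$ is therefore the nontrivial claim that this particular component is the unique component of $F_i$ \emph{not} contracted by $\varphi$. Your first paragraph reduces this to "track which component survives using the candidate graphs" and then asserts the answer; that is exactly the content that must be proved, and no argument is given. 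The paper's argument is a concrete contradiction: if $\varphi$ contracted $F_i'$, then at the stage where $F_i'$ has become a $(-1)$-curve it still meets (the images of) three components of $D+E_1+E_2$, of which only one can be a section because $\varphi(S_1+S_2+S_3)$ is smooth (Lemma~\ref{lem:sec}); contracting it then forces that section to meet the image of the fiber $F_i$ with multiplicity $>1$, which is impossible for a $1$-section. This branch-component/tangency argument is the missing idea.

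A second, smaller inaccuracy: Lemma~\ref{lem:sec} does not say that "$\varphi$ does not touch $S_1+S_2+S_3$." Since $S_2=D_0^{(2)}$ has $S_2^2=-1$ on $V$ while $\varphi(S_2)^2=0$, some centers of the blow-ups inverse to $\varphi$ necessarily lie on the images of $S_2$; what is true is only that the sections are horizontal and hence never contracted, and that their images remain disjoint. Your second paragraph is, in spirit, the right mechanism for the remaining step and matches the paper: the paper simply observes that if the component of $F_i$ meeting $S_2$ were contracted, then $\varphi(S_2)^2>0$, contradicting $\varphi(S_2)^2=0$; hence $F_1'$ and $F_2'$ must both meet $S_2$, which singles out the first graph in Figure~\ref{figsf}. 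But as written your proposal leaves the first (and main) assertion unestablished, so the proof is incomplete.
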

\begin{proof}
By Lemma~\ref{lem:sec},
we have $\varphi(F_0)=\varphi(F_0')$.
Suppose that $\varphi$ contracts $F_1'$.
Let $F_1'$ intersect $S_j$.
Write $\varphi$ as $\varphi=\varphi_3\circ\varphi_2\circ\varphi_1$,
where $\varphi_2$ is the contraction of $F_1'$.
The curve $\varphi_1(F_1')$ intersects three irreducible components of $D+E_1+E_2$.
By Lemma~\ref{lem:sec},
$\varphi_1(F_1')$ does not intersect the images under $\varphi_1$
of sections other than $S_j$.
It follows that
$\varphi_{2}(\varphi_1(S_j))\varphi_{2}(\varphi_1(F_1))>1$, which is absurd.
Thus $\varphi$ does not contract $F_1'$.
Similarly, $\varphi$ does not contract $F_2'$.
If one of $F_1'$, $F_2'$ does not intersect $S_2$,
then $\varphi(S_2)^2> 0$, which contradicts Lemma~\ref{lem:sec}.
Thus $F_1'$ and $F_2'$ intersect $S_2$.
\end{proof}

By Lemma~\ref{lem:fib},
the dual graph of $D+E_1+E_2$ must coincide with that in Figure~\ref{fig}.
%%%%%%%%%%%%%%%%%%%%%%%%%%%%%%%%%%%%%%%%%%%%%%%%%%%%%%%%%%%%%%%%%%%%%%%%%%%%%%%
\section{Proof of Theorem~\ref{thm1} --- continued}
%%%%%%%%%%%%%%%%%%%%%%%%%%%%%%%%%%%%%%%%%%%%%%%%%%%%%%%%%%%%%%%%%%%%%%%%%%%%%%%
%
Let the notation be as in the previous section.
We infer $g_i\le 2$ for $i=1,2$.
With the direction from the left-hand side to the right of Figure~\ref{fig},
we regard $T_{ij}$'s as linear chains.
Put $s_i=-S_i^2$ and $f_j=-(F_j')^2$ for $i\ne 2$ and $j=1,2$.
We have $s_i\ge 2$ and $f_j\ge 2$.
\begin{lemma}\label{lem:0}
The following assertions hold.
\begin{enumerate}
\item[(i)]
We may assume $B^{(1)}_{g_1}=[S_1,T_{11}]$.
We have $T_{21}=\emptyset$.
There exists a non-negative integer $l_{22}$ such that $T_{22}=\TW{l_{22}}$.
\item[(ii)]
There exist positive integers $k_{12}$, $k_{34}$ such that
$[S_1,T_{11}]^{\ast}=[T_{12},k_{12}+1,\TW{l_{22}}]$,
$[F_1',T_{13}]^{\ast}=[T_{14},k_{34}+1,\TW{k_{12}-1}]$ and
$[T_{24},S_3]^{\ast}=[\TW{k_{34}-1},f_2,T_{23}]$.
We have
$A^{(1)}_{g_1}=\TW{\NS^{(1)}_{g_1}}\TA[T_{12},k_{12}+1,\TW{l_{22}}]$.
\end{enumerate}
\end{lemma}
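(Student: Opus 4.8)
The plan is to read off everything from the explicit shape of the dual graph in Figure~\ref{fig} and then feed the relevant sub-chains into Lemma~\ref{lem:bu} and Lemma~\ref{lem:cres}/Proposition~\ref{prop:cres}. First I would identify, in Figure~\ref{fig}, which vertices are the $(-1)$-curves $E_{i1}, E_{i2}$ and how the chains $T_{ij}$, the sections $S_1, S_2, S_3$ and the fiber components $F_1', F_2'$ are glued. Since each $F_i$ is of type $(\mathrm{III_1})$ with dual graph the first one in Figure~\ref{figsf}, the singular fiber $F_1$ is a linear chain running $S_1$--$T_{11}$--$E_{11}$--$T_{12}$--$\cdots$ and similarly for $F_2$, while the contraction $\varphi$ collapses each $F_i$ to a smooth $0$-curve meeting each of the three sections transversally once.

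For part (i): the section $S_1$ lies in $A^{(1)}_{g_1}+B^{(1)}_{g_1}$ and meets $D^{(1)}_0$, and $D^{(1)}_0=F_0'$ is disjoint from $F_1, F_2$; tracing the branch of $F_1$ attached at $S_1$ through the resolution graph of the cusp $P_1$ shows that the part of $B^{(1)}_{g_1}$ on the $F_1$-side of $S_1$ is exactly $T_{11}$, so after possibly swapping the roles of $A^{(1)}_{g_1}$ and $B^{(1)}_{g_1}$ (equivalently swapping $\{F_1,F_2\}$ with $\{$the two halves of the last resolution step$\}$) we may assume $B^{(1)}_{g_1}=[S_1,T_{11}]$. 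That $T_{21}=\emptyset$ and $T_{22}=\TW{l_{22}}$ for some $l_{22}\ge 0$ comes from comparing the branch of $F_2$ adjacent to $S_1$ against Proposition~\ref{prop:cres}(i), which forces $A^{(1)}_{g_1}=\TW{\NS^{(1)}_{g_1}}\TA (B^{(1)}_{g_1})^{\ast}$: the $(-2)$-curves at the start of $A^{(1)}_{g_1}$ are precisely the $\TW{l_{22}}$ block sitting beyond $E_{21}$, and the component $T_{21}$ that would otherwise separate $S_1$ from $E_{21}$ must be empty because there is no room for a non-$(-2)$-curve there in the type $(\mathrm{III_1})$ picture.

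For part (ii): apply Lemma~\ref{lem:bu} three times, once at each $(-1)$-curve appearing in the chain read off from Figure~\ref{fig}. Each $E_{ij}$ is a $(-1)$-curve sitting between two admissible chains, and $\varphi$ contracts the whole singular fiber; peeling off the contractions in order and applying Lemma~\ref{lem:bu}(i) to the local configuration $[\,\text{chain}\,,1,\text{chain}\,]$ that shrinks to the relevant $0$-section yields each of the three adjointness relations $[S_1,T_{11}]^{\ast}=[T_{12},k_{12}+1,\TW{l_{22}}]$, $[F_1',T_{13}]^{\ast}=[T_{14},k_{34}+1,\TW{k_{12}-1}]$, $[T_{24},S_3]^{\ast}=[\TW{k_{34}-1},f_2,T_{23}]$, where $k_{12},k_{34}$ are the numbers of sprouting blow-ups supplied by Lemma~\ref{lem:bu}(ii); the $\TW{l_{22}}$, $\TW{k_{12}-1}$, $\TW{k_{34}-1}$ tails are exactly the $\TW{a-1}$ tails in the formula $A^{\ast}=[B,n+1,\TW{a-1}]$ of Lemma~\ref{lem:bu}(i) with $a=s_1$, then $a=f_1$, then $a=f_2$ respectively. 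Finally the identity $A^{(1)}_{g_1}=\TW{\NS^{(1)}_{g_1}}\TA[T_{12},k_{12}+1,\TW{l_{22}}]$ is just Proposition~\ref{prop:cres}(i), $A^{(1)}_{g_1}=\TW{\NS^{(1)}_{g_1}}\TA (B^{(1)}_{g_1})^{\ast}$, combined with the first adjointness relation $[S_1,T_{11}]^{\ast}=[T_{12},k_{12}+1,\TW{l_{22}}]$ and $B^{(1)}_{g_1}=[S_1,T_{11}]$ from part~(i).

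The main obstacle I expect is bookkeeping: correctly matching the oriented chains in Figure~\ref{fig} with the oriented chains $A^{(1)}_{g_1}, B^{(1)}_{g_1}$ coming out of the cusp resolution (orientations, transposes $\TP{(\cdot)}$, and which endpoint $S_1, S_3$ attach to), and making sure the three applications of Lemma~\ref{lem:bu} are applied at the contraction step for which each $E_{ij}$ is genuinely the unique $(-1)$-curve in the preimage of the $0$-section — i.e.\ verifying the hypotheses of Lemma~\ref{lem:bu} (that $[a]$ is the image of the prescribed sub-chain) at each stage. Once the picture is set up correctly, all three relations and the final formula are immediate consequences of the quoted lemmas, with no genuine computation required.
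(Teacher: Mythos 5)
Your overall skeleton (a symmetry normalization for (i), three applications of Lemma~\ref{lem:bu} for (ii), and Proposition~\ref{prop:cres} for the final identity) matches the paper, but the key quantitative step in (ii) is wrong, and it is exactly the step you dismiss as bookkeeping. In each application of Lemma~\ref{lem:bu} the chain $[A,1,B]$ contracts to $[a]$ where $-a$ is the self-intersection of the \emph{image} of $A$ after that partial contraction, not the self-intersection on $V$ and not $0$. You take $a=s_1$, $f_1$, $f_2$; the correct values are $a=l_{22}+1$, $k_{12}$, $k_{34}$. This is visible already in the statement: the tails $\TW{a-1}$ you quote would then read $\TW{s_1-1}$, $\TW{f_1-1}$, $\TW{f_2-1}$ rather than the asserted $\TW{l_{22}}$, $\TW{k_{12}-1}$, $\TW{k_{34}-1}$ (and ``shrinks to the relevant $0$-section'' is incompatible with Lemma~\ref{lem:bu}, which requires $a\ge 1$). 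What actually determines these values is a cascade that your proposal omits: one orders the contractions as $\varphi=\varphi_0\circ\varphi_{21}\circ\varphi_{11}\circ\varphi_{12}\circ\varphi_{22}$; contracting $[E_{21},T_{22}]=[1,\TW{l_{22}}]$ onto $S_1$ raises $S_1^2$ by $l_{22}+1$, so $\varphi(S_1)^2=0$ and Lemma~\ref{lem:indf}~(iii) force $\varphi_{11}$ to contract $[S_1,T_{11},E_{11},T_{12}]$ to $[l_{22}+1]$; the sprouting count $k_{12}$ of that contraction is precisely the amount by which $(F_1')^2$ is raised, so $\varphi(F_1')^2=0$ forces $\varphi_{12}$ to contract $[F_1',T_{13},E_{12},T_{14}]$ to $[k_{12}]$; likewise its sprouting count $k_{34}$ becomes the target for $[T_{23},E_{22},T_{24},S_3]$, and finally $\varphi(F_2')^2=0$ gives $f_2=k+1$, producing the middle entry $f_2$ in the third relation. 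Each stage's sprouting number is the next stage's target; without this the three displayed adjoint formulas simply do not follow from Lemma~\ref{lem:bu}.

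Two smaller points in (i): $T_{21}=\emptyset$ holds because any component of $T_{21}$ would be a component of $D$ attached to $S_1$ on a third branch, contradicting that $S_1$ sits in the linear chain $B^{(1)}_{g_1}=[S_1,T_{11}]$ with its $D$-neighbours already accounted for --- it has nothing to do with there being ``no room for a non-$(-2)$-curve.'' And $T_{22}=\TW{l_{22}}$ comes from applying Lemma~\ref{lem:bu} to the fact that $\varphi$ contracts $[E_{21},T_{22}]$ through a $(-1)$-curve stage; the curves of $T_{22}$ are not ``the $(-2)$-curves at the start of $A^{(1)}_{g_1}$'' --- $A^{(1)}_{g_1}$ lies on the $S_3$ side of $D^{(1)}_0$, and the occurrence of $\TW{l_{22}}$ in $A^{(1)}_{g_1}=\TW{\NS^{(1)}_{g_1}}\TA[T_{12},k_{12}+1,\TW{l_{22}}]$ is a numerical identity via $(B^{(1)}_{g_1})^{\ast}$, not an identification of those curves.
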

\begin{proof}
(i)
We may assume $B^{(1)}_{g_1}=[S_1,T_{11}]$
because
the dual graph of $D+E_1+E_2$ is symmetric
about the line passing through $F_1'$, $S_2$ and $F_2'$ in Figure~\ref{fig},
and the line passing through $S_1$, $S_2$ and $S_3$.
We have $T_{21}=\emptyset$.
If $T_{22}\ne\emptyset$,
then $\varphi$ contracts $[E_{21},T_{22}]$ to a ($-1$)-curve.
By Lemma~\ref{lem:bu},
there exists a positive integer $l_{22}$ such that $T_{22}=\TW{l_{22}}$.
We set $l_{22}=0$ if $T_{22}=\emptyset$.

(ii)
We may assume that
$\varphi=\varphi_0\circ\varphi_{21}\circ\varphi_{11}\circ\varphi_{12}\circ\varphi_{22}$, where $\varphi_0$ contracts $C'$ and $\varphi_{ij}$ contracts
$T_{i,2j-1}+E_{ij}+T_{i,2j}$ to a point.
Since $[E_{21},T_{22}]=[1,\TW{l_{22}}]$ and $\varphi(S_1)^2=0$,
$\varphi_{11}$ contracts $[S_1,T_{11},E_{11},T_{12}]$ to $[l_{22}+1]$
by Lemma~\ref{lem:indf} (iii).
By Lemma~\ref{lem:bu},
there exists a positive integer $k_{12}$ such that
$[S_1,T_{11}]^{\ast}=[T_{12},k_{12}+1,\TW{l_{22}}]$.
The composite of the subdivisional blow-ups of $\varphi_{11}$
with respect to the preimages of $S_1$ contracts
$[T_{11},E_{11},T_{12}]$ to $[\TW{k_{12}-1},1]$.
Since $\varphi(F_1')^2=0$,
$\varphi_{12}$ contracts $[F_1',T_{13},E_{12},T_{14}]$ to $[k_{12}]$
by Lemma~\ref{lem:indf} (iii).
By Lemma~\ref{lem:bu},
there exists a positive integer $k_{34}$ such that
$[F_1',T_{13}]^{\ast}=[T_{14},k_{34}+1,\TW{k_{12}-1}]$.
Similarly,
$\varphi_{22}$ contracts $[T_{23},E_{22},T_{24},S_3]$ to $[k_{34}]$.
By Lemma~\ref{lem:bu},
there exists a positive integer $k$ such that
$[S_3,\TP{T}_{24}]^{\ast}=[\TP{T}_{23},k+1,\TW{k_{34}-1}]$.
Since $\varphi(F_2')^2=0$,
we have $0=-f_2+k+1$.
The last assertion follows from
Proposition~\ref{prop:cres}.
\end{proof}

Now we prove Theorem~\ref{thm1}.
The linear chain $B^{(2)}_{g_2}$ coincides with one of
$\TP{[T_{12},F_1']}$,
$[F_1',T_{13}]$,
$\TP{[T_{22},F_2']}$ or
$[F_2',T_{23}]$.
\begin{figure}
\begin{center}
\includegraphics{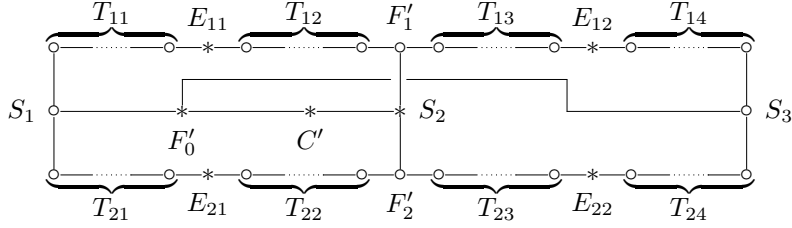}
\end{center}
\caption{The dual graph of $D+E_1+E_2$}\label{fig}
\end{figure}
%
%%%%%%%%%%%%%%%%%%%%%%%%%%%%%%%%%%%%%%%%%%%%%%%%%%%%%%%%%%%%%%%%%%%%%%%%%%%%%%%
\subsection{$B^{(2)}_{g_2}=\protect\TP{[T_{12},F_1']}$}\label{subsec:1}
\begin{lemma}\label{lem:1}
We have $T_{13}=\emptyset$, $k_{34}=1$, $T_{14}=\TW{f_1-k_{12}-1}$ and
$[T_{24},S_3]^{\ast}=[f_2,T_{23}]$.
\end{lemma}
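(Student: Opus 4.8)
The plan is to exploit the hypothesis $B^{(2)}_{g_2}=\TP{[T_{12},F_1']}$ together with Proposition~\ref{prop:cres} applied to the cusp $P_2$. First I would recall that, by Proposition~\ref{prop:cres}(i) applied to the last pair $(A^{(2)}_{g_2},B^{(2)}_{g_2})$ of the resolution of $P_2$, we have $A^{(2)}_{g_2}=\TW{\NS^{(2)}_{g_2}}\TA (B^{(2)}_{g_2})^{\ast}$ and $(A^{(2)}_{g_2})^{\ast}=[B^{(2)}_{g_2},\NS^{(2)}_{g_2}+1]$; in particular $(B^{(2)}_{g_2})^{\ast}$ is determined. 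On the other hand, the component $S_3$ is one of the two components of $A^{(1)}_{g_1}+B^{(1)}_{g_1}$ meeting $D^{(1)}_0$, and in the current branch of the case analysis the role of $S_3$ forces $[F_1',T_{13}]$ (read in the appropriate direction) to be a segment of the resolution graph of $P_2$ attached in a prescribed way; reading off Figure~\ref{fig} one sees that $[F_1',T_{13}]$ together with $E_{12}$ and $T_{14}$ sits inside the dual graph of $\sigma^{-1}(P_2)$, and $B^{(2)}_{g_2}=\TP{[T_{12},F_1']}$ means the branching at $D^{(2)}_0$ occurs precisely at the far end of $[T_{12},F_1']$, so that $T_{13}$ must be empty (there is no room for a further segment past the branch point on that side).

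Having obtained $T_{13}=\emptyset$, I would feed this back into Lemma~\ref{lem:0}(ii): the identity $[F_1',T_{13}]^{\ast}=[T_{14},k_{34}+1,\TW{k_{12}-1}]$ becomes $[F_1']^{\ast}=[T_{14},k_{34}+1,\TW{k_{12}-1}]$. Now $[F_1']=[f_1]$ is a single vertex, and by Lemma~\ref{lem:adj}(ii) its adjoint is $[f_1]^{\ast}=\TW{f_1-1}$, a string of $(-2)$'s of length $f_1-1$. Matching $\TW{f_1-1}=[T_{14},k_{34}+1,\TW{k_{12}-1}]$ forces the vertex of weight $-(k_{34}+1)$ to be a $(-2)$, i.e.\ $k_{34}=1$, and then $T_{14}$ must itself be a string of $(-2)$'s whose length is $f_1-1-1-(k_{12}-1)=f_1-k_{12}-1$; that is, $T_{14}=\TW{f_1-k_{12}-1}$ (with the convention that a negative or zero exponent gives the empty chain, which is consistent since $f_1\ge k_{12}+1$ will hold). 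This pins down three of the four assertions.

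For the last assertion, $[T_{24},S_3]^{\ast}=[f_2,T_{23}]$, I would substitute $k_{34}=1$ into the third identity of Lemma~\ref{lem:0}(ii), namely $[T_{24},S_3]^{\ast}=[\TW{k_{34}-1},f_2,T_{23}]$; since $\TW{k_{34}-1}=\TW{0}=\emptyset$, this collapses directly to $[T_{24},S_3]^{\ast}=[f_2,T_{23}]$, as desired. The main obstacle I anticipate is the first step: rigorously extracting $T_{13}=\emptyset$ from the case hypothesis requires carefully tracking, via Lemma~\ref{lem:cres} and Figure~\ref{fig}, which linear chains of the $P_2$-resolution are identified with which labelled subgraphs $T_{ij}$, $E_{ij}$, $F_j'$, $S_i$ of the combined graph $D+E_1+E_2$, and checking that the branch point of $\sigma^{-1}(P_2)$ at $D^{(2)}_0$ sits at the stated end; once the dictionary between the two graph presentations is set up, the remaining steps are the routine adjoint-chain manipulations indicated above.
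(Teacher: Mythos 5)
Your proposal is correct and follows essentially the same route as the paper: $T_{13}=\emptyset$ is read off from the case hypothesis (the branch of $D^{(2)}_0$ through $F_1'$ being exactly $[F_1',\TP{T}_{12}]$ leaves no room for $T_{13}$), and then $k_{34}=1$, $T_{14}=\TW{f_1-k_{12}-1}$ and $[T_{24},S_3]^{\ast}=[f_2,T_{23}]$ all fall out of the identities in Lemma~\ref{lem:0}(ii) exactly as you compute, using $[f_1]^{\ast}=\TW{f_1-1}$. The paper's proof is just a terser version of the same adjoint-matching argument.
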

\begin{proof}
It is clear that $T_{13}=\emptyset$.
By Lemma~\ref{lem:0}, we see
$[T_{14},k_{34}+1,\TW{k_{12}-1}]=\TW{f_1-1}$.
This means that $k_{34}=1$, $T_{14}=\TW{f_1-k_{12}-1}$.
By Lemma~\ref{lem:0}, we have
$[T_{24},S_3]^{\ast}=[f_2,T_{23}]$.
\end{proof}

Case (1): $g_2=1$.
Either $A^{(2)}_1=[T_{22},F_2']$ or $A^{(2)}_1=\TP{[F_2',T_{23}]}$.
Suppose $A^{(2)}_1=[T_{22},F_2']$.
We have $T_{23}=\emptyset$.
By Lemma~\ref{lem:1},
we get $[T_{24},S_3]=\TW{f_2-1}$.
Thus $T_{14}+S_3+T_{24}$ consists of ($-2$)-curves
and so does $A^{(1)}_{g_1}$, which contradicts Proposition~\ref{prop:cres}.
Hence $A^{(2)}_1=\TP{[F_2',T_{23}]}$.
It follows that $T_{22}=\emptyset$.
By Lemma~\ref{lem:1} and Proposition~\ref{prop:cres},
we obtain $[T_{24},S_3]=[\NS^{(2)}_1+1,T_{12},F_1']$.
We have $T_{24}\ne\emptyset$.

Suppose $T_{14}=\emptyset$.
We have $f_1=k_{12}+1$,
$g_1=1$ and $A^{(1)}_1=[T_{24},S_3]$.
Since $\TW{l_{22}}=T_{22}=\emptyset$,
we get $B^{(1)}_1=[T_{12},f_1]^{\ast}$ by Lemma~\ref{lem:0}.
By Proposition~\ref{prop:cres},
$[\NS^{(2)}_1+1,T_{12},f_1]=[T_{24},S_3]=\TW{\NS^{(1)}_1}\TA[T_{12},f_1]$.
Thus $\NS^{(1)}_1=2$, $\NS^{(2)}_1=1$.
Since $[T_{12},f_1]=\TW{1}\TA[T_{12},f_1]$,
we infer $[T_{12},f_1]^{\ast}=[[T_{12},f_1]^{\ast},2]$, which is absurd.
Hence $T_{14}\ne\emptyset$.
If $g_1=1$, then $A^{(1)}_1=[T_{14},s_3]$ and $T_{24}=\emptyset$,
which is a contradiction.
Thus $g_1=2$.
Since $A^{(1)}_2=S_3$,
we have $T_{12}=\emptyset$, $\NS^{(1)}_2=1$ and $A^{(1)}_2=S_3=[k_{12}+2]$
by Lemma~\ref{lem:0}.
By Proposition~\ref{prop:cres},
$B^{(1)}_2=\TW{k_{12}}$.

By Lemma~\ref{lem:1},
$T_{14}$ consists of ($-2$)-curves.
By Proposition~\ref{prop:cres},
$B^{(1)}_1=\TP{T}_{14}$ and $A^{(1)}_1=T_{24}$.
By Lemma~\ref{lem:1} and Proposition~\ref{prop:cres},
we get
$[f_2,T_{23}]=\TW{k_{12}+1}\TA[\TW{f_1-k_{12}-1},\NS^{(1)}_1+1]$.
Since $\emptyset\ne T_{14}=\TW{f_1-k_{12}-1}$,
we have $[f_2,T_{23}]=[\TW{k_{12}},3,\TW{f_1-k_{12}-2},\NS^{(1)}_1+1]$.
On the other hand,
$[f_2,T_{23}]=\TP{A}^{(2)}_1=\TW{f_1-1}\TA\TW{\NS^{(2)}_1}=[\TW{f_1-2},3,\TW{\NS^{(2)}_1-1}]$ by Proposition~\ref{prop:cres}.
This means that $\NS^{(2)}_1=2$, $\NS^{(1)}_1=1$ and $f_1=k_{12}+2$.
Write $k=k_{12}$.
We have
$A^{(1)}_2=[k+2]$,
$B^{(1)}_2=\TW{k}$,
$B^{(1)}_1=[2]$ and
$B^{(2)}_1=[k+2]$.
By Proposition~\ref{prop:cres},
we see $A^{(1)}_1=[3]$ and
$A^{(2)}_1=[2,3,\TW{k}]$.
It follows from Proposition~\ref{prop:ms} that the numerical data of
$C$ is equal to
$\{(2(k+1),(k+1)_2),\;\;((k+2)_2,k+1)\}$,
which coincides with the data 2 with $a=1$, $b=k+1$.

Case (2): $g_2=2$.
By Lemma~\ref{lem:0}, $T_{22}$ consists of ($-2$)-curves.
By Proposition~\ref{prop:cres},
we have $B^{(2)}_1=\TP{T}_{22}$ and $A^{(2)}_1=\TP{T}_{23}$.
Thus $A^{(2)}_1=\TW{\NS^{(2)}_1}\TA[l_{22}+1]$.
Since $T_{22}\ne\emptyset$,
we infer
$\NV(A^{(1)}_{g_1})\ge2$ by Lemma~\ref{lem:0}.
It follows that $g_1=1$.
Either $A^{(1)}_1=[T_{14},S_3]$ or $A^{(1)}_1=[T_{24},S_3]$.
By Lemma~\ref{lem:1},
$T_{14}$ consists of ($-2$)-curves.
By Lemma~\ref{lem:0},
$A^{(1)}_1-S_3$ contains an irreducible component other than a ($-2$)-curve.
Thus $A^{(1)}_1=[T_{24},S_3]$, $T_{14}=\emptyset$ and $f_1=k_{12}+1$.

Because $A^{(2)}_2=F_2'$,
we have $\TW{f_2-1}=[\NS^{(2)}_2+1,T_{12},f_1]$ by Proposition~\ref{prop:cres}.
This shows that $\NS^{(2)}_2=1$, $f_1=2$ and $T_{12}=\TW{f_2-3}$.
Thus $k_{12}=1$.
By Lemma~\ref{lem:0},
$[S_1,T_{11}]=\TW{f_2+l_{22}-2}^{\ast}=[f_2+l_{22}-1]$.
By Proposition~\ref{prop:cres} and Lemma~\ref{lem:1},
$[f_2,T_{23}]={A^{(1)}_1}^{\ast}=[S_1,T_{11},\NS^{(1)}_1+1]=[f_2+l_{22}-1,\NS^{(1)}_1+1]$.
Hence $l_{22}=1$, $T_{23}=[\NS^{(1)}_1+1]$.
We infer $\NS^{(2)}_1=1$ and $\NS^{(1)}_1=2$.
Put $k=f_2-2$.
We have $k\ge 1$,
$B^{(1)}_1=[k+2]$,
$A^{(2)}_2=[k+2]$,
$B^{(2)}_1=[2]$ and
$A^{(2)}_1=[3]$.
By Proposition~\ref{prop:cres},
we get
$A^{(1)}_1=[2,3,\TW{k}]$ and
$B^{(2)}_2=\TW{k}$.
The numerical data of
$C$ is equal to
$\{(2(k+1),(k+1)_2),\;\;((k+2)_2,k+1)\}$,
which coincides with the data 2 with $a=1$, $b=k+1$.
%%%%%%%%%%%%%%%%%%%%%%%%%%%%%%%%%%%%%%%%%%%%%%%%%%%%%%%%%%%%%%%%%%%%%%%%%%%%%%%
\subsection{$B^{(2)}_{g_2}=[F_1',T_{13}]$}
If $T_{13}=\emptyset$,
then this case is contained in the case~\ref{subsec:1}.
Thus we may assume $T_{13}\ne\emptyset$.
We have $T_{12}=\emptyset$.
\begin{lemma}\label{lem:2}
We have
$g_2=1$,
$T_{22}=\emptyset$ and
$A^{(2)}_1=\TP{[F_2',T_{23}]}$.
Moreover,
$[T_{24},S_3]^{\ast}=[\TW{k_{12}+k_{34}-2},k_{34}+1,\TP{T}_{14}]\TA\TW{\NS^{(2)}_1}$, $B^{(1)}_{g_1}=\TW{k_{12}}$.
\end{lemma}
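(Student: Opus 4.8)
The plan is to read off the shape of $\sigma^{-1}(P_2)$ around $D^{(2)}_0=S_2$ from Figure~\ref{fig} together with Proposition~\ref{prop:cres}, and then to deduce the numerical identities by adjoint bookkeeping with Lemmas~\ref{lem:det1}, \ref{lem:indf} and \ref{lem:adj}. Since $T_{12}=\emptyset$, Lemma~\ref{lem:0}(ii) already supplies $[S_1,T_{11}]^{\ast}=[k_{12}+1,\TW{l_{22}}]$, $[F_1',T_{13}]^{\ast}=[T_{14},k_{34}+1,\TW{k_{12}-1}]$ and $[T_{24},S_3]^{\ast}=[\TW{k_{34}-1},f_2,T_{23}]$, so once the combinatorics is settled the rest is computation. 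First I would pin down $A^{(2)}_{g_2}$. The two linear chains of $\sigma^{-1}(P_2)$ adjacent to $D^{(2)}_0=S_2$ are $A^{(2)}_{g_2}$ and $B^{(2)}_{g_2}=[F_1',T_{13}]$; since $S_2$ is a $1$-section of $p$ meeting $F_0$ at $C'$ and meeting each of $F_1$, $F_2$ in one point, and $F_1'\in B^{(2)}_{g_2}$ is the component of $F_1$ through which $S_2$ passes, the chain $A^{(2)}_{g_2}$ meets $S_2$ at $F_2'$; being admissible it avoids the $(-1)$-curves $E_{21},E_{22}$, so Figure~\ref{fig} shows it is one of the two arms of $F_2$ issuing from $F_2'$. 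By Proposition~\ref{prop:cres}(i), $A^{(2)}_{g_2}=\TW{\NS^{(2)}_{g_2}}\TA[F_1',T_{13}]^{\ast}=\TW{\NS^{(2)}_{g_2}}\TA[T_{14},k_{34}+1,\TW{k_{12}-1}]$, which by Proposition~\ref{prop:cres}(ii) contains a curve of self-intersection $\le-3$; since $T_{22}=\TW{l_{22}}$ consists of $(-2)$-curves by Lemma~\ref{lem:0}(i), the $T_{22}$-arm is ruled out, so $A^{(2)}_{g_2}=\TP{[F_2',T_{23}]}$.

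Next I would show $g_2=1$; the exclusion of $g_2=2$, together with getting the combinatorial dictionary right, is the crux. Suppose $g_2=2$. Then $A^{(2)}_2=A^{(2)}_{g_2}=\TP{[F_2',T_{23}]}$, and the branch vertex $A^{(2)}_{2,1}$ carrying $B^{(2)}_1$ is its first vertex: either the $E_{22}$-end of $T_{23}$, which by Figure~\ref{fig} has, besides the next vertex of $T_{23}$, only the neighbour $E_{22}\not\subset D$; or (if $T_{23}=\emptyset$) the vertex $F_2'$, whose only neighbours in $D$ are $S_2$ and, when it is nonempty, the last vertex of $T_{22}$. In either case $A^{(2)}_{2,1}$ has fewer than the three neighbours in $D$ that a branch vertex must have, a contradiction. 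Hence $g_2=1$ and $\sigma^{-1}(P_2)=A^{(2)}_1+B^{(2)}_1+D^{(2)}_0$. A nonempty $T_{22}$ would be joined to $F_2'\in\sigma^{-1}(P_2)$ and so would lie in $\sigma^{-1}(P_2)$, which it does not; thus $T_{22}=\emptyset$, whence $l_{22}=0$, $[S_1,T_{11}]^{\ast}=[k_{12}+1]$, and $B^{(1)}_{g_1}=[S_1,T_{11}]=\TW{k_{12}}$ by Lemma~\ref{lem:adj}(ii).

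Finally, the identity for $[T_{24},S_3]^{\ast}$ is pure computation. With $g_2=1$, Proposition~\ref{prop:cres}(i) gives $(A^{(2)}_1)^{\ast}=[B^{(2)}_1,\NS^{(2)}_1+1]=[F_1',T_{13},\NS^{(2)}_1+1]$, so $[F_2',T_{23}]^{\ast}=\TP{(A^{(2)}_1)^{\ast}}=[\NS^{(2)}_1+1,\TP{T}_{13},F_1']$ by Lemma~\ref{lem:indf}(ii). Using Lemma~\ref{lem:adj}(i) together with Lemma~\ref{lem:indf}(ii) in the form $[n+1,A]^{\ast}=A^{\ast}\TA\TW{n}$, this gives $[F_2',T_{23}]=[\TP{T}_{13},F_1']^{\ast}\TA\TW{\NS^{(2)}_1}$, and $[\TP{T}_{13},F_1']^{\ast}=\TP{([F_1',T_{13}]^{\ast})}=[\TW{k_{12}-1},k_{34}+1,\TP{T}_{14}]$ by Lemma~\ref{lem:indf}(ii) applied to the expression for $[F_1',T_{13}]^{\ast}$. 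Substituting into $[T_{24},S_3]^{\ast}=[\TW{k_{34}-1},f_2,T_{23}]$ (noting $[F_2',T_{23}]=[f_2,T_{23}]$) and moving $\TA\TW{\NS^{(2)}_1}$ past the prefix $\TW{k_{34}-1}$, I obtain $[T_{24},S_3]^{\ast}=[\TW{k_{34}-1},\TW{k_{12}-1},k_{34}+1,\TP{T}_{14}]\TA\TW{\NS^{(2)}_1}=[\TW{k_{12}+k_{34}-2},k_{34}+1,\TP{T}_{14}]\TA\TW{\NS^{(2)}_1}$, as claimed.
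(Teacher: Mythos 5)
Your final computation of $[T_{24},S_3]^{\ast}$ and of $B^{(1)}_{g_1}$ is correct and agrees with the paper's, but the combinatorial reductions that precede it contain a genuine gap. The claim that Proposition~\ref{prop:cres}(ii) ``rules out the $T_{22}$-arm'' is false: the candidate $A^{(2)}_{g_2}=[T_{22},F_2']=[\TW{l_{22}},f_2]$ contains the component $F_2'$ with $(F_2')^2=-f_2$, and nothing so far prevents $f_2\ge 3$, so this chain can perfectly well contain a curve of self-intersection $\le -3$. It is moreover consistent with $A^{(2)}_{g_2}=\TW{\NS^{(2)}_{g_2}}\TA[T_{14},k_{34}+1,\TW{k_{12}-1}]$: take $T_{14}=\emptyset$ and $k_{12}=1$, so that the right-hand side becomes $[\TW{\NS^{(2)}_{g_2}-1},k_{34}+2]$. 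The paper has to exclude exactly this configuration ($T_{23}=\emptyset$, $A^{(2)}_1=[T_{22},F_2']$) by a separate argument: it shows $g_1=1$ and $A^{(1)}_1=[T_{24},S_3]$, computes $[T_{24},S_3]^{\ast}$ in two ways (via Proposition~\ref{prop:cres} on the $P_1$-side and via Lemma~\ref{lem:0} on the $P_2$-side), and derives $k_{34}=2$ and $l_{22}=0$, contradicting $T_{22}\ne\emptyset$. None of this appears in your argument, and your later sentence ``a nonempty $T_{22}$ \dots would lie in $\sigma^{-1}(P_2)$, which it does not'' is circular, since it presupposes the identification $A^{(2)}_1=\TP{[F_2',T_{23}]}$ that is in question.

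The exclusion of $g_2=2$ has the same defect. If $g_2=2$, the branch vertex of $\sigma^{-1}(P_2)$ must be $F_2'$ itself (it is the only vertex of $\sigma^{-1}(P_2)$ besides $F_1'$ that can have three neighbours in $D$, and $F_1'$ does not branch because $T_{12}=\emptyset$); then $A^{(2)}_2=[F_2']$ is a single vertex, $T_{22}\ne\emptyset\ne T_{23}$, and $F_2'$ genuinely has the three neighbours $S_2$, the last vertex of $T_{22}$ and the first vertex of $T_{23}$. Your dichotomy (``the $E_{22}$-end of $T_{23}$, or $F_2'$ with $T_{23}=\emptyset$'') never meets this case, because it starts from the unproven identity $A^{(2)}_2=\TP{[F_2',T_{23}]}$. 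Here too the paper needs a numerical contradiction: from $[f_2]=\TW{\NS^{(2)}_2}\TA[T_{14},k_{34}+1,\TW{k_{12}-1}]$ it gets $\NS^{(2)}_2=1$, $T_{14}=\emptyset$, $k_{12}=1$, $f_2=k_{34}+2$, then compares two expressions for $[T_{24},S_3]^{\ast}$ to force $k_{34}=1$, whence $[F_1',T_{13}]=[2]$, contradicting $T_{13}\ne\emptyset$. You would need to supply arguments of this kind for both exclusions before your last paragraph can be applied.
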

\begin{proof}
Since $T_{12}=\emptyset$,
$[S_1,T_{11}]=[l_{22}+2,\TW{k_{12}-1}]$ by Lemma~\ref{lem:0}.
This means that $s_1=l_{22}+2$, $T_{11}=\TW{k_{12}-1}$.
By Proposition~\ref{prop:cres} and Lemma~\ref{lem:0},
$A^{(2)}_{g_2}=\TW{\NS^{(2)}_{g_2}}\TA[T_{14},k_{34}+1,\TW{k_{12}-1}]$.
Suppose $g_2=2$.
Since $A^{(2)}_2=F_2'$,
we get $\NS^{(2)}_2=1$, $T_{14}=\emptyset$, $k_{12}=1$ and $f_2=k_{34}+2$.
We have $g_1=1$ and $A^{(1)}_1=[T_{24},S_3]$.
By Proposition~\ref{prop:cres},
$[T_{24},S_3]^{\ast}=[l_{22}+2,\NS^{(1)}_1+1]$.
By Lemma~\ref{lem:0}, $T_{22}$ consists of ($-2$)-curves.
By Proposition~\ref{prop:cres},
we have $A^{(2)}_1=\TP{T}_{23}$, $B^{(2)}_1=\TP{T}_{22}$
and $T_{23}=[l_{22}+2,\TW{\NS^{(2)}_1-1}]$.
By Lemma~\ref{lem:0},
$[T_{24},S_3]^{\ast}=[\TW{k_{34}-1},k_{34}+2,l_{22}+2,\TW{\NS^{(2)}_1-1}]$.
Thus $[l_{22}+2,\NS^{(1)}_1+1]=[\TW{k_{34}-1},k_{34}+2,l_{22}+2,\TW{\NS^{(2)}_1-1}]$.
This shows $k_{34}=1$.
We have $[F_1',T_{13}]=[2]$ by Lemma~\ref{lem:0},
which contradicts $T_{13}\ne\emptyset$.
Hence $g_2=1$.

Suppose $T_{22}\ne\emptyset$.
We have $T_{23}=\emptyset$ and
$A^{(2)}_1=[T_{22},F_2']$.
Since $A^{(2)}_{1}=\TW{\NS^{(2)}_{1}}\TA[T_{14},k_{34}+1,\TW{k_{12}-1}]$
and $T_{22}=\TW{l_{22}}$,
we infer $T_{14}=\emptyset$,
$k_{12}=1$ and $f_2=k_{34}+2$.
We have $g_1=1$ and $A^{(1)}_1=[T_{24},S_3]$.
By Proposition~\ref{prop:cres}, $[T_{24},S_3]^{\ast}=[l_{22}+2,\NS^{(1)}_1+1]$.
By Lemma~\ref{lem:0}, $[\TW{k_{34}-1},k_{34}+2]=[l_{22}+2,\NS^{(1)}_1+1]$.
This shows $k_{34}=2$ and $l_{22}=0$, which is absurd.
Hence $T_{22}=\emptyset$.
We get $A^{(2)}_1=\TP{[F_2',T_{23}]}$.
By Lemma~\ref{lem:0},
we have
$[T_{24},S_3]^{\ast}=[\TW{k_{12}+k_{34}-2},k_{34}+1,\TP{T}_{14}]\TA\TW{\NS^{(2)}_1}$ and
$B^{(1)}_{g_1}=\TW{k_{12}}$.
\end{proof}

Case (1): $k_{34}=1$.
Suppose $T_{14}=\emptyset$.
We have $g_1=1$ and $A^{(1)}_1=[T_{24},S_3]$.
By Lemma~\ref{lem:0},
$[T_{24},S_3]=[\TW{\NS^{(1)}_1-1},k_{12}+2]$.
On the other hand,
$[T_{24},S_3]=[\NS^{(2)}_1+1,k_{12}+1]$ by Lemma~\ref{lem:2},
which is impossible.
Thus $T_{14}\ne\emptyset$.
By Lemma~\ref{lem:2},
$[T_{24},S_3]=[\NS^{(2)}_1+1,\TP{T}_{14}^{\ast}\TA\TW{1}^{\ast k_{12}}]$.
Thus $T_{24}\ne\emptyset$.
We have $g_1=2$ and $A^{(1)}_2=S_3$.
By Lemma~\ref{lem:0},
$\NS^{(1)}_2=1$ and $s_3=k_{12}+2$.
Either $A^{(1)}_1=T_{14}$ or $A^{(1)}_1=T_{24}$.
If $A^{(1)}_1=T_{14}$,
then $T_{14}^{\ast}=[\TP{T}_{24},\NS^{(1)}_1+1$] by Proposition~\ref{prop:cres}.
We get $[T_{24},S_3]=[\NS^{(2)}_1+1,\NS^{(1)}_1+1,T_{24}\TA\TW{1}^{\ast k_{12}}]$,
which is a contradiction.
Hence $A^{(1)}_1=T_{24}$ and $B^{(1)}_1=\TP{T}_{14}$.
By Proposition~\ref{prop:cres},
$T_{24}=\TW{\NS^{(1)}_1}\TA\TP{T}_{14}^{\ast}$.
Thus $[\TW{\NS^{(1)}_1}\TA\TP{T}_{14}^{\ast},S_3]=[\NS^{(2)}_1+1,\TP{T}_{14}^{\ast}\TA\TW{1}^{\ast k_{12}}]$.
Hence $\NS^{(1)}_1=1$.
We have $\TW{s_3-1}\TA[\TP{T}_{14},2]=[\TW{k_{12}},\TP{T}_{14}]\TA\TW{\NS^{(2)}_1}$.
This shows $s_3=k_{12}+\NS^{(2)}_1$ and $\NS^{(2)}_1=2$.
Thus $[\TP{T}_{14}^{\ast},2]=[2,\TP{T}_{14}^{\ast}]$.
There exists a positive integer $l$ such that $T_{14}^{\ast}=\TW{l}$
by Lemma~\ref{lem:adj} (iii).
Write $k=k_{12}$.
We have
$B^{(1)}_2=\TW{k}$,
$A^{(1)}_2=[k+2]$ and
$B^{(1)}_1=[l+1]$.
By Lemma~\ref{lem:0},
$B^{(2)}_1=[k+2,\TW{l-1}]$.
By Proposition~\ref{prop:cres},
we have
$A^{(1)}_1=[3,\TW{l-1}]$ and
$A^{(2)}_1=[2,l+2,\TW{k}]$.
The numerical data of
$C$ is equal to
$\{(((l+1)(k+1),l(k+1),(k+1)_l),\;\;((l(k+1)+1)_2,(k+1)_l)\}$,
which coincides with the data 2 with $a=l$, $b=k+1$.

Case (2): $k_{34}>1$, $T_{14}=\emptyset$.
We have $g_1=1$ and $A^{(1)}_1=[T_{24},S_3]$.
By Proposition~\ref{prop:cres} and Lemma~\ref{lem:2},
$[T_{24},S_3]^{\ast}=[B^{(1)}_1,\NS^{(1)}_1+1]=[\TW{k_{12}},\NS^{(1)}_1+1]$.
On the other hand,
$[T_{24},S_3]^{\ast}=[\TW{k_{12}+k_{34}-2},k_{34}+2,\TW{\NS^{(2)}_1-1}]$
by Lemma~\ref{lem:2}.
Hence $k_{34}=2$, $\NS^{(2)}_1=1$ and $\NS^{(1)}_1=3$.
Write $k=k_{12}$.
We have
$B^{(1)}_1=\TW{k}$.
By Lemma~\ref{lem:0},
$B^{(2)}_1=[k+1,2]$.
By Proposition~\ref{prop:cres},
we have
$A^{(1)}_1=[2,2,k+2]$ and
$A^{(2)}_1=[4,\TW{k-1}]$.
The numerical data of
$C$ is equal to
$\{((k+1)_3),\;\;(2k+1,k_2)\}$,
which coincides with the data 1 with $a=1$, $b=k+1$.

Case (3): $k_{34}>1$, $T_{14}\ne\emptyset$.
By Lemma~\ref{lem:2},
$[T_{24},S_3]=[\NS^{(2)}_1+1,\TP{T}_{14}^{\ast}\TA\TW{k_{34}-1},k_{12}+k_{34}]$.
We have $T_{24}=[\NS^{(2)}_1+1,\TP{T}_{14}^{\ast}\TA\TW{k_{34}-1}]$
and $s_3=k_{12}+k_{34}$.
We infer $T_{24}\ne\emptyset$ and $g_1=2$.
Since $A^{(1)}_2=S_3$, we get $\NS^{(1)}_2=1$ and $k_{34}=2$
by Lemma~\ref{lem:0}.
Either $B^{(1)}_1=\TP{T}_{14}$ or $B^{(1)}_1=\TP{T}_{24}$.
If $B^{(1)}_1=\TP{T}_{24}$, then
$\TP{T}_{14}^{\ast}=[\NS^{(1)}_1+1,T_{24}]$ by Proposition~\ref{prop:cres}.
Thus $T_{24}=[\NS^{(2)}_1+1,\NS^{(1)}_1+1,T_{24}\TA\TW{1}]$, which is absurd.
Hence $B^{(1)}_1=\TP{T}_{14}$.
By Proposition~\ref{prop:cres},
$T_{24}=A^{(1)}_1=\TW{\NS^{(1)}_1}\TA\TP{T}_{14}^{\ast}$.
This means that $\NS^{(1)}_1=2$, $\NS^{(2)}_1=1$ and
$\TW{1}\TA\TP{T}_{14}^{\ast}=\TP{T}_{14}^{\ast}\TA\TW{1}$.
Thus $[T_{14},2]=[2,T_{14}]$.
There exists a positive integer $l$ such that $T_{14}=\TW{l}$.
Write $k=k_{12}$.
We have
$B^{(1)}_2=\TW{k}$,
$A^{(1)}_2=[k+2]$,
$B^{(1)}_1=\TW{l}$.
By Lemma~\ref{lem:0},
$B^{(2)}_1=[k+1,l+2]$.
By Proposition~\ref{prop:cres},
we have
$A^{(1)}_1=[2,l+2]$ and
$A^{(2)}_1=[3,\TW{l-1},3,\TW{k-1}]$.
The numerical data of
$C$ is equal to
$\{(((l+1)(k+1))_2,(k+1)_{l+1}),\;\;((l+1)(k+1)+k,l(k+1)+k,(k+1)_l,k)\}$,
which coincides with the data 1 with $a=l+1$, $b=k+1$.
%
%
%
%%%%%%%%%%%%%%%%%%%%%%%%%%%%%%%%%%%%%%%%%%%%%%%%%%%%%%%%%%%%%%%%%%%%%%%%%%%%%%%
\subsection{$B^{(2)}_{g_2}=\protect\TP{[T_{22},F_2']}$}
We have $T_{23}=\emptyset$.
We may assume $T_{11}\ne\emptyset$
because
this case is contained in the case~\ref{subsec:1}
if $T_{11}=\emptyset$.
\begin{lemma}\label{lem:3}
We have $g_1=1$, $T_{24}=\emptyset$, $A^{(1)}_1=[T_{14},S_3]$,
$f_2=2$, $s_3=k_{34}+1$, $f_1=l_{22}+3$,
and
$[F_1',T_{13}]^{\ast}=\TW{\NS^{(1)}_1}\TA[T_{12},k_{12}+1,\TW{l_{22}+k_{12}-1}]$.
\end{lemma}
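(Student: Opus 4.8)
The plan is to proceed by the same pattern of case analysis that governs the companion Lemmas~\ref{lem:1} and~\ref{lem:2}, exploiting the symmetry of Figure~\ref{fig} and the contraction formulas of Lemma~\ref{lem:0} together with Proposition~\ref{prop:cres}. We are in the configuration $B^{(2)}_{g_2}=\TP{[T_{22},F_2']}$ with the standing assumption $T_{11}\ne\emptyset$ (the case $T_{11}=\emptyset$ having been absorbed into Section~\ref{subsec:1}), and we must pin down the positions of the remaining linear chains. First I would record what $T_{11}\ne\emptyset$ forces through Lemma~\ref{lem:0}(ii): since $[S_1,T_{11}]^{\ast}=[T_{12},k_{12}+1,\TW{l_{22}}]$ and $A^{(1)}_{g_1}=\TW{\NS^{(1)}_{g_1}}\TA[T_{12},k_{12}+1,\TW{l_{22}}]$, the chain $A^{(1)}_{g_1}$ has length at least two, so $g_1$ is constrained; I expect this to give $g_1=1$ directly, because if $g_1=2$ then $A^{(1)}_2=S_3$ is a single vertex, contradicting $\NV(A^{(1)}_{g_1})\ge 2$.

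Next I would determine which of $A^{(1)}_1=[T_{14},S_3]$ or $A^{(1)}_1=[T_{24},S_3]$ holds. The key is Lemma~\ref{lem:1}-type information adapted to this case: with $B^{(2)}_{g_2}=\TP{[T_{22},F_2']}$ and $T_{23}=\emptyset$, Lemma~\ref{lem:0}(ii) gives $[T_{24},S_3]^{\ast}=[\TW{k_{34}-1},f_2,T_{23}]=[\TW{k_{34}-1},f_2]$, an admissible chain entirely of length $k_{34}$. I would then compare this with the expression for $B^{(2)}_{g_2}$ read off via Proposition~\ref{prop:cres} from $A^{(2)}_{g_2}$ and $\NS^{(2)}_{g_2}$: writing $A^{(2)}_1=\TP{[T_{22},F_2']}$ and using $T_{22}=\TW{l_{22}}$ we get a chain of the shape $[\ldots,l_{22}+2,\ldots]$ whose structure must match. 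Matching discriminants and the distinguished $\le-3$ vertex guaranteed by Proposition~\ref{prop:cres}(ii) should force $f_2=2$, hence $[T_{24},S_3]^{\ast}=[\TW{k_{34}-1},2]=\TW{k_{34}}$ by Lemma~\ref{lem:det1}(i), and therefore $[T_{24},S_3]=\TW{k_{34}}^{\ast\,\cdot}=[k_{34}+1]$ — a single vertex; since $S_3$ is that vertex this gives $T_{24}=\emptyset$ and $s_3=k_{34}+1$. Consequently $A^{(1)}_1\ne[T_{24},S_3]$ forces $A^{(1)}_1=[T_{14},S_3]$.

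With $g_1=1$, $A^{(1)}_1=[T_{14},S_3]$, $T_{24}=\emptyset$, $f_2=2$, $s_3=k_{34}+1$ in hand, the last two assertions should fall out mechanically. Applying Proposition~\ref{prop:cres} to $(A^{(1)}_1,B^{(1)}_1,\NS^{(1)}_1)$ gives $A^{(1)}_1=\TW{\NS^{(1)}_1}\TA (B^{(1)}_1)^{\ast}$, and since in this configuration $B^{(1)}_{g_1}=B^{(1)}_1$ is one of the four candidate chains meeting $D^{(1)}_0$, here it must be $[F_1',T_{13}]$; so $[F_1',T_{13}]^{\ast}=\TW{\NS^{(1)}_1}\TA[T_{14},S_3]$. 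Then I would substitute $s_3=k_{34}+1$ and invoke Lemma~\ref{lem:0}(ii)'s identity $[F_1',T_{13}]^{\ast}=[T_{14},k_{34}+1,\TW{k_{12}-1}]$ together with $\ast$-associativity and Lemma~\ref{lem:adj} to rewrite $[T_{14},k_{34}+1]=[T_{14},S_3]$ and fold the trailing $\TW{k_{12}-1}$ in, obtaining $[F_1',T_{13}]^{\ast}=\TW{\NS^{(1)}_1}\TA[T_{12},k_{12}+1,\TW{l_{22}+k_{12}-1}]$; the shift from $\TW{l_{22}}$ to $\TW{l_{22}+k_{12}-1}$ comes from absorbing $\TW{k_{12}-1}$ via $[\,\cdot\,,k_{12}+1,\TW{l_{22}}]\TA\TW{k_{12}-1}=[\,\cdot\,,k_{12}+1,\TW{l_{22}+k_{12}-1}]$. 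Finally, $f_1=l_{22}+3$ should follow by computing $(F_1')^2$ from the contraction $\varphi_{12}$ as in Lemma~\ref{lem:0}, using that $[F_1',T_{13}]$ now reduces to a chain whose first discriminant datum pins $f_1$.

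The main obstacle I anticipate is the bookkeeping in the second step: correctly identifying $A^{(2)}_1$ among $[T_{22},F_2']$ and $\TP{[F_2',T_{23}]}$ when $T_{23}=\emptyset$ (so these partially coincide), and then extracting $f_2=2$ cleanly rather than through a long forced chain of subcases. The symmetry remark in Lemma~\ref{lem:0}(i) and the ``$\le -3$ vertex lives in $A_i$, not $B_i$'' constraint of Proposition~\ref{prop:cres}(ii) are the tools that collapse these subcases, and I would lean on them heavily; once $f_2=2$ is secured, everything else is a determinant computation via Lemma~\ref{lem:det1} and Lemma~\ref{lem:adj}.
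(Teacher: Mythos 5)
Your proposal has the right toolkit (Lemma~\ref{lem:0}, Proposition~\ref{prop:cres}, the adjoint calculus), and the opening step --- deducing $[T_{24},S_3]^{\ast}=[\TW{k_{34}-1},f_2]$ from $T_{23}=\emptyset$, hence $T_{24}=\TW{f_2-2}$ and $s_3=k_{34}+1$ --- is exactly right. But the two exclusions that carry the whole lemma are not established. First, $g_1=1$ does \emph{not} follow ``directly'' from $\NV(A^{(1)}_{g_1})\ge 2$: since $A^{(1)}_{g_1}=\TW{\NS^{(1)}_{g_1}}\TA[T_{12},k_{12}+1,\TW{l_{22}}]$, this chain collapses to the single vertex $[k_{12}+2]$ when $T_{12}=\emptyset$, $l_{22}=0$ and $\NS^{(1)}_{g_1}=1$, so $T_{11}\ne\emptyset$ alone gives no length bound (e.g.\ $[S_1,T_{11}]=[2,2]$ has adjoint $[3]$). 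The paper must actually enter the case $g_1=2$, derive $T_{12}=\emptyset$, $l_{22}=0$, $k_{34}=k_{12}+1$, then $g_2=1$ and $A^{(2)}_1=\TP{[F_1',T_{13}]}$, and only after several more steps reach $[S_1,T_{11}]=[2]$, contradicting $T_{11}\ne\emptyset$. Second, your derivation of $f_2=2$ by ``matching discriminants and the $\le-3$ vertex'' is not a proof: $A^{(2)}_{g_2}=[\TW{\NS^{(2)}_{g_2}-1},l_{22}+3,\TW{f_2-2}]$ always contains a vertex of weight $\le-3$, so Proposition~\ref{prop:cres}(ii) detects nothing here. Since $T_{24}=\TW{f_2-2}$, proving $f_2=2$ is literally the same as ruling out $T_{24}\ne\emptyset$, and the paper does this by another contradiction chain (forcing $T_{14}=\emptyset$, $f_1=k_{12}+1$, then $f_1=2$, $k_{12}=1$, $T_{12}=\emptyset$, $l_{22}=0$ and finally $[S_1,T_{11}]=[2]$ again). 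In both exclusions the standing hypothesis $T_{11}\ne\emptyset$ is the ultimate source of the contradiction; your outline never actually uses it for these steps, which is the telltale sign of the gap.

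There is also a concrete misidentification in your last paragraph: $B^{(1)}_1$ is \emph{not} $[F_1',T_{13}]$. By Lemma~\ref{lem:0}(i) we have $B^{(1)}_{g_1}=[S_1,T_{11}]$, and $[F_1',T_{13}]$ is one of the candidate chains meeting $D^{(2)}_0=S_2$, not $D^{(1)}_0$; consequently Proposition~\ref{prop:cres} gives $[T_{14},S_3]=A^{(1)}_1=\TW{\NS^{(1)}_1}\TA[S_1,T_{11}]^{\ast}=\TW{\NS^{(1)}_1}\TA[T_{12},k_{12}+1,\TW{l_{22}}]$, and the displayed identity is obtained by appending $\TW{k_{12}-1}$ to both sides of $[F_1',T_{13}]^{\ast}=[T_{14},k_{34}+1,\TW{k_{12}-1}]$ from Lemma~\ref{lem:0}(ii) --- your ``folding'' computation is the right mechanical move, but resting on the wrong identification of $B^{(1)}_1$. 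Finally, $f_1=l_{22}+3$ comes simply from reading off the last entry of $A^{(2)}_{g_2}=\TW{\NS^{(2)}_{g_2}}\TA[l_{22}+2]=[\TW{\NS^{(2)}_{g_2}-1},l_{22}+3]$ once $f_2=2$ is known (that entry is $-(F_1')^2$ because $A^{(2)}_{g_2}$ ends at $F_1'$), not from a separate computation with $\varphi_{12}$.
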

\begin{proof}
By Lemma~\ref{lem:0},
we have $[T_{24},S_3]=[\TW{f_2-2},k_{34}+1]$.
This shows that $T_{24}=\TW{f_2-2}$ and $s_3=k_{34}+1$.
Suppose $g_1=2$.
Since $A^{(1)}_2=S_3$,
we have $[k_{34}+1]=\TW{\NS^{(1)}_2}\TA[T_{12},k_{12}+1,\TW{l_{22}}]$
by Lemma~\ref{lem:0}.
This means that $\NS^{(1)}_2=1$, $T_{12}=\emptyset$, $l_{22}=0$ and $k_{34}=k_{12}+1$.
We infer $g_2=1$ and $A^{(2)}_1=\TP{[F_1',T_{13}]}$.
By Proposition~\ref{prop:cres},
$[F_1',T_{13}]^{\ast}=\TP{A}^{(2)\ast}_1=[\NS^{(2)}_1+1,F_2']$.
By Lemma~\ref{lem:0},
$[\NS^{(2)}_1+1,F_2']=[T_{14},k_{12}+2,\TW{k_{12}-1}]$.
Because $T_{14}\ne\emptyset$, we have $k_{12}=1$.
By Lemma~\ref{lem:0},
we obtain $[S_1,T_{11}]=[2]$,
which contradicts $T_{11}\ne\emptyset$.
Hence $g_1=1$.

Suppose $T_{24}\ne\emptyset$.
We have $T_{14}=\emptyset$ and $A^{(1)}_1=[T_{24},S_3]=[\TW{f_2-2},k_{34}+1]$.
Thus $f_2>2$.
By Lemma~\ref{lem:0},
$[F_1',T_{13}]=[k_{12}+1,\TW{k_{34}-1}]$.
This shows $f_1=k_{12}+1$.
By Proposition~\ref{prop:cres},
$A^{(2)}_{g_2}=[\TW{\NS^{(2)}_{g_2}-1},l_{22}+3,\TW{f_2-2}]$.
Since $A^{(2)}_{g_2}=\TP{[F_1',\ldots]}$, we have $f_1=2$ and $k_{12}=1$.
By Lemma~\ref{lem:0},
$A^{(1)}_{1}=\TW{\NS^{(1)}_{1}}\TA[T_{12},\TW{l_{22}+1}]$.
Thus $[\TW{f_2-2},k_{34}+1]=\TW{\NS^{(1)}_{1}}\TA[T_{12},\TW{l_{22}+1}]$.
If $T_{12}\ne\emptyset$ or $l_{22}>0$, then $k_{34}=1$ and
$\TW{f_2-2}=\TW{\NS^{(1)}_{1}}\TA[T_{12},\TW{l_{22}}]$, which is impossible.
Thus $T_{12}=\emptyset$ and $l_{22}=0$.
By Lemma~\ref{lem:0},
$[S_1,T_{11}]=[2]$, which contradicts $T_{11}\ne\emptyset$.
Hence $T_{24}=\emptyset$ and $A^{(1)}_1=[T_{14},S_3]$.
We have $f_2=2$.
By Proposition~\ref{prop:cres},
$A^{(2)}_{g_2}=[\TW{\NS^{(2)}_{g_2}-1},l_{22}+3]$.
This shows $f_1=l_{22}+3$.
By Lemma~\ref{lem:0},
$[F_1,T_{13}]^{\ast}=\TW{\NS^{(1)}_1}\TA[T_{12},k_{12}+1,\TW{l_{22}+k_{12}-1}]$.\end{proof}

Case (1): $g_2=1$.
If $T_{13}=\emptyset$, then
$\TW{f_1-1}=\TW{\NS^{(1)}_1}\TA[T_{12},k_{12}+1,\TW{l_{22}+k_{12}-1}]$
by Lemma~\ref{lem:3},
which is absurd.
Thus $T_{13}\ne\emptyset$.
We have $T_{12}=\emptyset$ and $A^{(2)}_1=\TP{[F_1',T_{13}]}$.
By Lemma~\ref{lem:3},
$\TP{A}^{(2)\ast}_1=\TW{\NS^{(1)}_1}\TA[k_{12}+1,\TW{l_{22}+k_{12}-1}]$.
By Proposition~\ref{prop:cres},
$\TP{A}^{(2)\ast}_1=[\NS^{(2)}_1+1,\TW{l_{22}+1}]$.
It follows that $\NS^{(1)}_1+k_{12}=3$.
By Lemma~\ref{lem:0},
$[S_1,T_{11}]=[l_{22}+2,\TW{k_{12}-1}]$.
Since $T_{11}\ne\emptyset$,
we see $k_{12}=2$, $\NS^{(1)}_1=1$ and $\NS^{(2)}_1=3$.
Put $k=l_{22}+1$.
We have
$k\ge 1$,
$B^{(1)}_1=[k+1,2]$ and
$B^{(2)}_1=\TW{k}$.
By Proposition~\ref{prop:cres},
we get
$A^{(1)}_1=[4,\TW{k-1}]$ and
$A^{(2)}_1=[2,2,k+2]$.
The numerical data of
$C$ is equal to
$\{(2k+1,k_2),\;\;((k+1)_3)\}$,
which coincides with the data 1 with $a=1$, $b=k+1$.

Case (2): $g_2=2$.
We have $A^{(2)}_2=F_1'$, $T_{12}\ne\emptyset$ and $T_{13}\ne\emptyset$.
\begin{lemma}\label{lem:3-2}
We have
$B^{(2)}_1=\TP{T}_{12}$, $A^{(2)}_1=\TP{T}_{13}$,
$[\NS^{(2)}_1+1,T_{12}\ast\TW{l_{22}+2}]=[\TW{\NS^{(1)}_1}\TA T_{12},k_{12}+1,\TW{l_{22}+k_{12}-1}]$ and
$3=\NS^{(1)}_1+k_{12}$.
\end{lemma}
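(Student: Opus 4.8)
The plan is to imitate the structure already used for the previous cases (Lemma~\ref{lem:2} and Lemma~\ref{lem:3}), extracting the combinatorial constraints coming from the requirement that $\varphi$ contracts each singular fiber to a nonsingular fiber of the $\SP^1$-bundle, and then matching these against the descriptions of $A^{(i)}_{g_i}$, $B^{(i)}_{g_i}$ supplied by Proposition~\ref{prop:cres}. Concretely, since we are in the subcase $B^{(2)}_{g_2}=\TP{[T_{22},F_2']}$ with $g_2=2$, the chain $A^{(2)}_2$ is forced to be $F_1'$ (the branching component adjacent to it must be $F_1'$), which immediately gives $T_{12}\ne\emptyset$ and $T_{13}\ne\emptyset$ as noted just before the statement. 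First I would peel off the $\rho'_2,\rho''_2$ part of $\sigma^{(2)}$: by Lemma~\ref{lem:cres}(iii) the composite $\sigma^{(2)}_{2,1}\circ\sigma^{(2)}_{2,2}$ contracts $[A^{(2)}_2,1,B^{(2)}_2]$ to a $(-1)$-curve, while $\sigma^{(2)}_{1,1}\circ\sigma^{(2)}_{1,2}$ handles $[A^{(2)}_1,1,B^{(2)}_1]$; reading the dual graph in Figure~\ref{fig} and using that $B^{(2)}_2=\TP{[T_{22},F_2']}$ already accounts for the $F_2'$-side, the remaining pieces of the $F_1$-side attached to $F_1'$ must be distributed so that $B^{(2)}_1=\TP{T}_{12}$ and $A^{(2)}_1=\TP{T}_{13}$. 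That establishes the first two claimed equalities.

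Next I would compute $A^{(2)}_1$ in two ways. On one hand, Proposition~\ref{prop:cres}(i) gives $A^{(2)}_1=\TW{\NS^{(2)}_1}\TA (B^{(2)}_1)^{\ast}=\TW{\NS^{(2)}_1}\TA \TP{T}_{12}^{\ast}$, equivalently $(A^{(2)}_1)^{\ast}=[B^{(2)}_1,\NS^{(2)}_1+1]=[\TP{T}_{12},\NS^{(2)}_1+1]$. On the other hand, $A^{(2)}_1=\TP{T}_{13}$, so $T_{13}^{\ast}=[\TP{T}_{12},\NS^{(2)}_1+1]$, and transposing via Lemma~\ref{lem:indf}(ii) (using $\TP{(X^{\ast})}=(\TP{X})^{\ast}$) gives $\TP{T}_{13}^{\ast}=[\NS^{(2)}_1+1,T_{12}]$. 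Then I feed this into the relation from Lemma~\ref{lem:0}(ii), namely $[F_1',T_{13}]^{\ast}=[T_{14},k_{34}+1,\TW{k_{12}-1}]$, together with $s_3=k_{34}+1$ from Lemma~\ref{lem:3} and the identity $[F_1',T_{13}]^{\ast}=\TW{\NS^{(1)}_1}\TA[T_{12},k_{12}+1,\TW{l_{22}+k_{12}-1}]$ proved in Lemma~\ref{lem:3}; here I also use that $A^{(1)}_1=[T_{14},S_3]$ and Proposition~\ref{prop:cres} to express $[F_1',T_{13}]^{\ast}$ as $[B^{(1)}_1,\NS^{(1)}_1+1]$ up to transposition. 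Equating the two expressions and using the $\TA$-operation rule $[\underline{X},x_r+y_1-1,\overline{Y}]$ yields the displayed equation $[\NS^{(2)}_1+1,T_{12}\ast\TW{l_{22}+2}]=[\TW{\NS^{(1)}_1}\TA T_{12},k_{12}+1,\TW{l_{22}+k_{12}-1}]$; comparing the distinguished (non-$(-2)$) entry on each side, which sits at the junction created by $\TA$, gives $\NS^{(1)}_1+k_{12}=3$ (the $+1$ on the left at position $\NS^{(2)}_1+1$ is absorbed, and the branching weight $k_{12}+1$ on the right must match $l_{22}+2+1$-type bookkeeping, forcing $\NS^{(1)}_1+k_{12}=3$).

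The main obstacle I anticipate is the bookkeeping in that final chain identity: $\TA$ is associative but the two sides have their $\TW{}$-blocks and their single non-$(-2)$ vertices arranged differently, so one must carefully track positions — in particular, which vertex is the branching component of $A^{(1)}_1$ guaranteed to have self-intersection $\le -3$ by Proposition~\ref{prop:cres}(ii) — to conclude the weights line up only when $\NS^{(1)}_1+k_{12}=3$. A clean way to do this is to apply $d(-)$ and Lemma~\ref{lem:det1}(i) to both sides, or to transpose and use Lemma~\ref{lem:adj}(iii)'s rigidity ($[A,m+1]=[n+1,A]$ forces $A$ constant), isolating the forced value of the shared non-$(-2)$ weight and reading off $\NS^{(1)}_1+k_{12}=3$; once that arithmetic relation is in hand, the remaining two equalities ($B^{(2)}_1=\TP{T}_{12}$, $A^{(2)}_1=\TP{T}_{13}$) are immediate from the fiber-contraction structure established at the start. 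The rest is routine verification that no contradiction with $T_{11}\ne\emptyset$, $T_{12}\ne\emptyset$, $T_{13}\ne\emptyset$ arises at this stage, which it does not since the arithmetic relation $\NS^{(1)}_1+k_{12}=3$ still leaves room (e.g.\ $\NS^{(1)}_1=1$, $k_{12}=2$ or $\NS^{(1)}_1=2$, $k_{12}=1$), to be narrowed in the subsequent case analysis.
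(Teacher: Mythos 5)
There is a genuine gap at the very first step. You treat the identification $B^{(2)}_1=\TP{T}_{12}$, $A^{(2)}_1=\TP{T}_{13}$ as something that can simply be ``read off'' from the dual graph once the $F_2'$-side is accounted for by $B^{(2)}_2$, and you repeat at the end that these two equalities are ``immediate from the fiber-contraction structure.'' They are not. The branching component $A^{(2)}_{2,1}=F_1'$ meets the last curve of $A^{(2)}_1$ and the first curve of $B^{(2)}_1$, and in Figure~\ref{fig} the two branches hanging off $F_1'$ (besides $S_2$) are $T_{12}$ and $T_{13}$; a priori either assignment is possible, i.e.\ either $B^{(2)}_1=\TP{T}_{12}$ (so $A^{(2)}_1=\TP{T}_{13}$) or $B^{(2)}_1=T_{13}$ (so $A^{(2)}_1=T_{12}$). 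Which branch is which depends on which one contains the exceptional curve of the relevant blow-up, and nothing in the incidence structure alone decides this. The paper's proof spends its first half precisely on excluding the second alternative: assuming $B^{(2)}_1=T_{13}$, Proposition~\ref{prop:cres} gives $T_{12}=A^{(2)}_1=\TW{\NS^{(2)}_1}\TA T_{13}^{\ast}$, and substituting this into the identity $[F_1',T_{13}]^{\ast}=\TW{\NS^{(1)}_1}\TA[T_{12},k_{12}+1,\TW{l_{22}+k_{12}-1}]$ from Lemma~\ref{lem:3} produces a chain equation of the form $[l_{22}+3,T_{13}]=\TW{1}^{\ast l_{22}+k_{12}-1}\TA\TW{k_{12}}\TA[T_{13},\NS^{(2)}_1+1,\NS^{(1)}_1+1]$, which is impossible (the right-hand side is strictly longer than the left). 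Without this exclusion your argument does not establish the first two assertions of the lemma, and everything downstream is conditional on them.

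The second half of your plan is essentially sound and matches the paper: once $A^{(2)}_1=\TP{T}_{13}$ is known, Proposition~\ref{prop:cres} gives $T_{13}^{\ast}=[\NS^{(2)}_1+1,T_{12}]$, and combining with $f_1=l_{22}+3$ and the expression for $[F_1',T_{13}]^{\ast}$ from Lemma~\ref{lem:3} yields the displayed identity; comparing the two sides (most cleanly by counting vertices, i.e.\ applying $\NV$ to both sides, rather than your somewhat vague ``distinguished entry'' bookkeeping) gives $\NS^{(1)}_1+k_{12}=3$. But as written, the proposal asserts rather than proves the crucial case elimination, so it is incomplete.
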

\begin{proof}
Either $B^{(2)}_1=\TP{T}_{12}$ or $B^{(2)}_1=T_{13}$.
Suppose $B^{(2)}_1=T_{13}$.
By Proposition~\ref{prop:cres},
$T_{12}=A^{(2)}_1=\TW{\NS^{(2)}_1}\TA T_{13}^{\ast}$.
By Lemma~\ref{lem:3},
$[l_{22}+3,T_{13}]=[\TW{\NS^{(1)}_1}\TA\TW{\NS^{(2)}_1}\TA T_{13}^{\ast},k_{12}+1,\TW{l_{22}+k_{12}-1}]^{\ast}=\TW{1}^{\ast l_{22}+k_{12}-1}\TA\TW{k_{12}}\TA[T_{13},\NS^{(2)}_1+1,\NS^{(1)}_1+1]$, which is impossible.
Thus $B^{(2)}_1=\TP{T}_{12}$ and $A^{(2)}_1=\TP{T}_{13}$.
By Proposition~\ref{prop:cres},
$T_{13}^{\ast}=\TP{A}^{(2)\ast}_{1}=[\NS^{(2)}_1+1,T_{12}]$.
By Lemma~\ref{lem:3},
$[\NS^{(2)}_1+1,T_{12}\ast\TW{l_{22}+2}]=[\TW{\NS^{(1)}_1}\TA T_{12},k_{12}+1,\TW{l_{22}+k_{12}-1}]$.
Hence $3=\NS^{(1)}_1+k_{12}$.
\end{proof}

Case (2--1): $k_{12}=1$.
By Lemma~\ref{lem:3-2},
we have $\NS^{(1)}_1=2$, $\NS^{(2)}_1=1$ and
$T_{12}\TA\TW{1}=\TW{1}\TA T_{12}$.
Thus $[2,T_{12}^{\ast}]=[T_{12}^{\ast},2]$.
There exists a positive integer $l'$ such that $T_{12}^{\ast}=\TW{l'}$.
Hence $T_{12}=[l'+1]$.
Put $l=l'-1$ and $k=l_{22}+1$.
By Lemma~\ref{lem:0},
$B^{(1)}_1=[k+2,\TW{l}]$.
Since $T_{11}\ne\emptyset$,
we see $l\ge 1$.
We have
$B^{(2)}_2=\TW{k}$,
$A^{(2)}_2=[k+2]$ and
$B^{(2)}_1=[l+2]$.
By Proposition~\ref{prop:cres},
we see
$A^{(1)}_1=[2,l+3,\TW{k}]$ and
$A^{(2)}_1=[3,\TW{l}]$.
The numerical data of
$C$ is equal to
$\{(((l+1)(k+1)+1)_2,(k+1)_{l+1}),\;\;((l+2)(k+1),(l+1)(k+1),(k+1)_{l+1})\}$,
which coincides with the data 2 with $a=l+1$, $b=k+1$.

Case (2--2): $k_{12}=2$.
By Lemma~\ref{lem:3-2},
$\NS^{(1)}_1=1$ and $[\NS^{(2)}_1,T_{12}]=[T_{12},2]$.
We have $\NS^{(2)}_1\ge2$.
Since $T_{12}^{\ast}\TA\TW{\NS^{(2)}_1-1}=\TW{1}\TA T_{12}^{\ast}$,
we see $\NS^{(2)}_1=2$.
There exists a positive integer $l$ such that $T_{12}=\TW{l}$.
Put $k=l_{22}+1$.
We have
$B^{(2)}_2=\TW{k}$,
$A^{(2)}_2=[k+2]$ and
$B^{(2)}_1=\TW{l}$.
By Lemma~\ref{lem:0},
$B^{(1)}_1=[k+1,l+2]$.
By Proposition~\ref{prop:cres},
we have
$A^{(1)}_1=[3,\TW{l-1},3,\TW{k-1}]$ and
$A^{(2)}_1=[2,l+2]$.
The numerical data of
$C$ is equal to
$\{((l+1)(k+1)+k,l(k+1)+k,(k+1)_{l},k),\;\;(((l+1)(k+1))_2,(k+1)_{l+1})\}$,
which coincides with the data 1 with $a=l+1$, $b=k+1$.
%%%%%%%%%%%%%%%%%%%%%%%%%%%%%%%%%%%%%%%%%%%%%%%%%%%%%%%%%%%%%%%%%%%%%%%%%%%%%%%
\subsection{$B^{(2)}_{g_2}=[F_2',T_{23}]$}

We have $T_{22}=\emptyset$.
We may assume $T_{11}\ne\emptyset\ne T_{23}$;
otherwise this case is contained in another case.

Case (1): $g_2=1$.
We show the following lemma.
\begin{lemma}\label{lem:4-1}
We have $T_{12}=\emptyset$, $A_1^{(2)}=\TP{[F_1',T_{13}]}$,
$f_2=2$, $B^{(1)}_{g_1}=\TW{k_{12}}$, $k_{12}\ge 2$
and
$[T_{14},k_{34}+1,\TW{k_{12}-2}]=[\NS^{(2)}_1+1,\TP{T_{23}}]$.
\end{lemma}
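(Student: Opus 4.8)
The plan is to pin down the chain $A^{(2)}_1$ inside the graph of Figure~\ref{fig}, and then to extract the remaining identities from Lemma~\ref{lem:0} and Proposition~\ref{prop:cres} by the adjoint calculus of Lemmas~\ref{lem:det1}--\ref{lem:adj}.

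\emph{Step 1: $T_{12}=\emptyset$ and $A^{(2)}_1=\TP{[F_1',T_{13}]}$.} By Lemma~\ref{lem:0} the chain $T_{12}$ is a sub-chain of $A^{(1)}_{g_1}=\TW{\NS^{(1)}_{g_1}}\TA[T_{12},k_{12}+1]$, hence is part of the exceptional divisor over $P_1$. On the other hand, since $g_2=1$, the last vertex of $A^{(2)}_1$ is the neighbour of $D^{(2)}_0=S_2$ in $D$ other than $C'$ and the first vertex $F_2'$ of $B^{(2)}_1=[F_2',T_{23}]$; by Figure~\ref{fig} this neighbour is $F_1'$, which therefore belongs to the exceptional divisor over $P_2$. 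As the exceptional divisors over $P_1$ and $P_2$ are joined by no edge of $D$, while (again by Figure~\ref{fig}) a nonempty $T_{12}$ would be adjacent to $F_1'$ in $D$, we conclude $T_{12}=\emptyset$. Then the only chain of $D$ attached to $F_1'$ besides the one running through $S_2$ is $T_{13}$, so $A^{(2)}_1=\TP{[F_1',T_{13}]}$.

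\emph{Step 2: the numerical identities.} In this subsection $T_{22}=\emptyset$, so $l_{22}=0$; combined with $T_{12}=\emptyset$, Lemma~\ref{lem:0} gives $[S_1,T_{11}]^{\ast}=[k_{12}+1]$, whence $B^{(1)}_{g_1}=[S_1,T_{11}]=[k_{12}+1]^{\ast}=\TW{k_{12}}$, and since $T_{11}\ne\emptyset$ this forces $k_{12}\ge2$. Next, Proposition~\ref{prop:cres} gives $(A^{(2)}_1)^{\ast}=[B^{(2)}_1,\NS^{(2)}_1+1]=[F_2',T_{23},\NS^{(2)}_1+1]$, while Lemma~\ref{lem:indf}(ii) together with $[F_1',T_{13}]^{\ast}=[T_{14},k_{34}+1,\TW{k_{12}-1}]$ from Lemma~\ref{lem:0} gives
\[
(A^{(2)}_1)^{\ast}=(\TP{[F_1',T_{13}]})^{\ast}=\TP{[T_{14},k_{34}+1,\TW{k_{12}-1}]}=[\TW{k_{12}-1},k_{34}+1,\TP{T_{14}}].
\]
Comparing first entries --- valid because $k_{12}\ge2$, so $\TW{k_{12}-1}$ is nonempty --- yields $f_2=2$. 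Reversing the two expressions for $(A^{(2)}_1)^{\ast}$ turns them into $[\NS^{(2)}_1+1,\TP{T_{23}},F_2']$ and $[T_{14},k_{34}+1,\TW{k_{12}-1}]$; their last entries are both $2$ (the vertex $F_2'$, resp.~the last vertex of $\TW{k_{12}-1}$), and deleting these last entries gives the asserted identity $[\NS^{(2)}_1+1,\TP{T_{23}}]=[T_{14},k_{34}+1,\TW{k_{12}-2}]$.

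The main obstacle is Step 1: everything in Step 2 is a routine computation with linear chains and their adjoints, so the real work lies in correctly reading the incidence of $T_{12}$ with $F_1'$ off Figure~\ref{fig} and in using that the exceptional divisors over the two cusps form disjoint, edge-free subgraphs of $D$.
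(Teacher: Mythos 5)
Your Step 2 is fine and is essentially the paper's computation, but Step 1 contains a genuine error, and it is exactly the step you yourself single out as carrying the real weight. The identity $A^{(1)}_{g_1}=\TW{\NS^{(1)}_{g_1}}\TA[T_{12},k_{12}+1,\TW{l_{22}}]$ from Lemma~\ref{lem:0}(ii) is an equality of \emph{abstract weighted linear chains}, produced by Lemma~\ref{lem:bu} from the contraction $\varphi$; it says that the weights of $A^{(1)}_{g_1}$ are computed by this formula from the weights of $T_{12}$, not that the curves of $T_{12}$ are components of $A^{(1)}_{g_1}$. (They cannot be: $A^{(1)}_{g_1}$ is the chain ending at $S_3$, built from $S_3$ and $T_{14}$ or $T_{24}$, whereas $T_{12}$ sits inside the fiber $F_1$ adjacent to $F_1'$ and $E_{11}$; note also that under the $\TA$ operation the first weight of the "$T_{12}$ block" gets shifted by $1$.) Consequently your adjacency argument collapses: if $T_{12}\ne\emptyset$ then $T_{12}$ lies in the \emph{same} connected component of $D-C'$ as $F_1'$, i.e.\ in $\sigma^{-1}(P_2)$, and the configuration $A^{(2)}_1=[T_{12},F_1']$ with $T_{13}=\emptyset$ is perfectly consistent with the graph; no incidence argument rules it out.

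The paper eliminates this alternative by a genuine numerical contradiction: assuming $T_{12}\ne\emptyset$ forces $T_{13}=\emptyset$, hence $[T_{14},k_{34}+1,\TW{k_{12}-1}]=\TW{f_1-1}$, so $k_{34}=1$ and $T_{14}$ consists of $(-2)$-curves; then $A^{(1)}_{g_1}$ has at least two components, so $g_1=1$, and by Proposition~\ref{prop:cres} the chain $A^{(1)}_1$ cannot be $[T_{14},S_3]$, so $A^{(1)}_1=[T_{24},S_3]$ with $T_{24}=\TW{\NS^{(1)}_1}\TA T_{12}$; finally comparing $[T_{12},F_1']=A^{(2)}_1=\TW{\NS^{(2)}_1}\TA[F_2',T_{23}]^{\ast}=\TW{\NS^{(2)}_1}\TA[T_{24},S_3]$ yields $T_{24}=\TW{\NS^{(1)}_1}\TA\TW{\NS^{(2)}_1}\TA T_{24}$, which is impossible. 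You need to supply this (or an equivalent) argument; once $T_{12}=\emptyset$ is established, the rest of your proof goes through as written.
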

\begin{proof}
Suppose $T_{12}\ne\emptyset$.
We have $A^{(2)}_1=[T_{12},F_1']$ and $T_{13}=\emptyset$.
By Lemma~\ref{lem:0},
$[T_{14},k_{34}+1,\TW{k_{12}-1}]=\TW{f_1-1}$.
This shows that $k_{34}=1$ and $T_{14}=\TW{f_1-k_{12}-1}$.
By Lemma~\ref{lem:0},
$A^{(1)}_{g_1}=\TW{\NS^{(1)}_{g_1}}\TA [T_{12},k_{12}+1]$.
Thus $A^{(1)}_{g_1}$ contains at least two irreducible components.
It follows that $g_1=1$.
Either $A^{(1)}_1=[T_{14},S_3]$ or $A^{(1)}_1=[T_{24},S_3]$.
Because $T_{14}$ consists of ($-2$)-curves,
the latter case must occur.
We infer $T_{24}=\TW{\NS^{(1)}_1}\TA T_{12}$.
By Proposition~\ref{prop:cres} and Lemma~\ref{lem:0},
$[T_{12},F_1']=A^{(2)}_1=\TW{\NS^{(2)}_1}\TA[F_2',T_{23}]^{\ast}=\TW{\NS^{(2)}_1}\TA[T_{24},S_3]$.
We have $T_{12}=\TW{\NS^{(2)}_1}\TA T_{24}$.
Thus
$T_{24}=\TW{\NS^{(1)}_1}\TA\TW{\NS^{(2)}_1}\TA T_{24}$, which is impossible.
Hence $T_{12}=\emptyset$.

We have $A_1^{(2)}=\TP{[F_1',T_{13}]}$.
By Lemma~\ref{lem:0},
$[T_{14},k_{34}+1,\TW{k_{12}-1}]=\TP{A}^{(2)\ast}_1$ and
$B^{(1)}_{g_1}=[S_1,T_{11}]=\TW{k_{12}}$.
We infer $k_{12}\ge 2$.
By Proposition~\ref{prop:cres},
$[T_{14},k_{34}+1,\TW{k_{12}-1}]=[\NS^{(2)}_1+1,\TP{T_{23}},F_2']$.
This shows that
$f_2=2$ and
$[T_{14},k_{34}+1,\TW{k_{12}-2}]=[\NS^{(2)}_1+1,\TP{T_{23}}]$.
\end{proof}

Case (1--1): $g_1=1$.
Suppose $T_{24}\ne\emptyset$.
We have $T_{14}=\emptyset$ and $A^{(1)}_1=[T_{24},S_3]$.
By Lemma~\ref{lem:0} and Lemma~\ref{lem:4-1},
$[\TW{k_{34}},T_{23}]=A^{(1)\ast}_1$.
By Proposition~\ref{prop:cres},
we have $[\TW{k_{34}},T_{23}]=[\TW{k_{12}},\NS^{(1)}_1+1]$.
By Lemma~\ref{lem:4-1},
$[T_{23},\NS^{(2)}_1+1]=[\TW{k_{12}-2},k_{34}+1]$.
We infer
$[\TW{k_{12}},\NS^{(1)}_1+1,\NS^{(2)}_1+1]=[\TW{k_{12}+k_{34}-2},k_{34}+1]$.
This means that $k_{34}=3$ and $\NS^{(1)}_1=1$.
By Proposition~\ref{prop:cres},
$[T_{24},S_3]=[k_{12}+2]$, which is absurd.
Thus $T_{24}=\emptyset$.

We have $A^{(1)}_1=[T_{14},S_3]$.
By Lemma~\ref{lem:0},
we get $[T_{14},S_3]=[\TW{\NS^{(1)}_1-1},k_{12}+2]$.
Hence $s_3=k_{12}+2$ and $T_{14}=\TW{\NS^{(1)}_1-1}$.
By Proposition~\ref{prop:cres} and Lemma~\ref{lem:4-1},
$[F_2',T_{23},\NS^{(2)}_1+1]=A^{(2)\ast}_1=\TP{[F_1',T_{13}]}^{\ast}$.
By Lemma~\ref{lem:0},
$[F_2',T_{23},\NS^{(2)}_1+1]=[\TW{k_{12}-1},k_{34}+1,\TW{\NS^{(1)}_1-1}]$.
Since $k_{12}\ge 2$,
we see
$[T_{23},\NS^{(2)}_1+1]=[\TW{k_{12}-2},k_{34}+1,\TW{\NS^{(1)}_1-1}]$.
By Lemma~\ref{lem:0},
$[\TW{k_{34}-1},F_2',T_{23}]=\TW{s_3-1}=\TW{k_{12}+1}$.
Thus $T_{23}=\TW{k_{12}-k_{34}+1}$.
Hence
$[\TW{k_{12}-k_{34}+1},\NS^{(2)}_1+1]=[\TW{k_{12}-2},k_{34}+1,\TW{\NS^{(1)}_1-1}]$.
We infer $4=k_{34}+\NS^{(1)}_1$.

Case ($\text{1--1}_\text{a}$): $k_{34}=1$.
We have $\NS^{(1)}_1=3$, $\NS^{(2)}_1=1$ and $[F_2',T_{23}]=\TW{k_{12}+1}$.
Put $k=k_{12}-1$.
By Lemma~\ref{lem:4-1},
$k\ge 1$ and $B^{(1)}_1=\TW{k+1}$.
We have
$B^{(2)}_1=\TW{k+2}$.
By Proposition~\ref{prop:cres},
$A^{(1)}_1=[2,2,k+3]$ and
$A^{(2)}_1=[k+4]$.
The numerical data of $C$ is equal to $\{(k+3),\, ((k+2)_3)\}$,
which coincides with the data 3 with $a=1$, $b=k+2$.

Case ($\text{1--1}_\text{b}$): $k_{34}>1$.
If $\NS^{(1)}_1=2$, then 
$[\TW{k_{12}-k_{34}+1},\NS^{(2)}_1+1]=[\TW{k_{12}-2},3,2]$,
which is impossible.
We have $\NS^{(1)}_1=1$, $k_{34}=3$ and  $\NS^{(2)}_1=3$.
Put $k=k_{12}-2$.
Since $[F_2',T_{23}]=\TW{k+1}$, we see $k\ge 1$.
We have
$B^{(1)}_1=\TW{k+2}$ and
$B^{(2)}_1=\TW{k+1}$.
By Proposition~\ref{prop:cres},
we obtain
$A^{(1)}_1=[k+4]$ and
$A^{(2)}_1=[2,2,k+3]$.
The numerical data of $C$ is equal to $\{((k+2)_3),\, (k+3)\}$,
which coincides with the data 3 with $a=1$, $b=k+2$.

Case (1--2): $g_1=2$.
We have $A^{(1)}_2=S_3$, $T_{14}\ne\emptyset$ and $T_{24}\ne\emptyset$.
\begin{lemma}\label{lem:4-1-2}
We have $\NS^{(1)}_2=1$, $s_3=k_{12}+2$,
$A^{(1)}_1=T_{14}$,
$B^{(1)}_1=\TP{T}_{24}$,
$[\TW{k_{12}+k_{34}-2},k_{34}+1,T_{24}^{\ast}\TA\TW{\NS^{(1)}_1}]=[\TW{k_{12}+1}\TA T_{24}^{\ast},\NS^{(2)}_1+1]$,
$k_{34}+\NS^{(1)}_1=3$.
\end{lemma}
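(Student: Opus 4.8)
The plan is to pin down, in turn, the top-level chains $A^{(1)}_{2},B^{(1)}_{2}$ of the resolution of $P_{1}$, then the first-level chains $A^{(1)}_{1},B^{(1)}_{1}$ inside Figure~\ref{fig}, and finally to run the adjoint/$\TA$-calculus of Lemmas~\ref{lem:det1}--\ref{lem:adj} to land on the asserted chain identity; the numerical relation $k_{34}+\NS^{(1)}_{1}=3$ will then drop out by a count of vertices. First, in the present subsection $T_{22}=\emptyset$, so $l_{22}=0$, and by Lemma~\ref{lem:4-1} also $T_{12}=\emptyset$; hence the formula of Lemma~\ref{lem:0}(ii) becomes $A^{(1)}_{2}=\TW{\NS^{(1)}_{2}}\TA[k_{12}+1]$, a linear chain with exactly $\NS^{(1)}_{2}$ vertices. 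Since $A^{(1)}_{2}$ is the single curve $S_{3}$, this forces $\NS^{(1)}_{2}=1$ and $A^{(1)}_{2}=[k_{12}+2]$, that is $s_{3}=k_{12}+2$; the second relation of Proposition~\ref{prop:cres}(i), together with $[k_{12}+2]^{\ast}=\TW{k_{12}+1}$ from Lemma~\ref{lem:adj}(ii), then recovers $B^{(1)}_{2}=\TW{k_{12}}$, consistently with Lemma~\ref{lem:4-1}.

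Next I would identify $A^{(1)}_{1}$ and $B^{(1)}_{1}$. By Lemma~\ref{lem:cres} the divisor $A^{(1)}_{1}+A^{(1)}_{2,1}+B^{(1)}_{1}=[A^{(1)}_{1},S_{3},B^{(1)}_{1}]$ is a linear chain, and in Figure~\ref{fig} the two arms attached to $S_{3}$ away from $D^{(1)}_{0}$ are $T_{14}$ and $T_{24}$, both nonempty in Case~(1--2); so $\{A^{(1)}_{1},B^{(1)}_{1}\}$ equals $\{T_{14},\TP{T}_{24}\}$ in one of the two orders. Assuming $A^{(1)}_{1}=\TP{T}_{24}$ and substituting the relation $A^{(1)}_{1}=\TW{\NS^{(1)}_{1}}\TA(B^{(1)}_{1})^{\ast}$ of Proposition~\ref{prop:cres}(i) into the identities of Lemma~\ref{lem:0} and Lemma~\ref{lem:4-1} produces a chain equation in which $T_{24}$ reappears wrapped around itself, which is impossible by a length count (the same mechanism that is used repeatedly in the proofs above). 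Hence $A^{(1)}_{1}=T_{14}$ and $B^{(1)}_{1}=\TP{T}_{24}$.

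Finally, with the four chains fixed, I would compute. From $s_{3}=k_{12}+2$ and Lemma~\ref{lem:adj}(i), $[T_{24},S_{3}]^{\ast}=[T_{24},k_{12}+2]^{\ast}=\TW{k_{12}+1}\TA T_{24}^{\ast}$, while Lemma~\ref{lem:0}(ii) with $f_{2}=2$ gives $[T_{24},S_{3}]^{\ast}=[\TW{k_{34}-1},2,T_{23}]=[\TW{k_{34}},T_{23}]$. Proposition~\ref{prop:cres}(i) for $A^{(1)}_{1}=T_{14}$, $B^{(1)}_{1}=\TP{T}_{24}$ gives $T_{14}=\TW{\NS^{(1)}_{1}}\TA(\TP{T}_{24})^{\ast}$, which, by $(\TP{A})^{\ast}=\TP{(A^{\ast})}$, $\TP{(A\TA B)}=\TP{B}\TA\TP{A}$ and $\TP{\TW{n}}=\TW{n}$, transposes to $\TP{T}_{14}=T_{24}^{\ast}\TA\TW{\NS^{(1)}_{1}}$. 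Transposing the identity of Lemma~\ref{lem:4-1} gives $[\TW{k_{12}-2},k_{34}+1,\TP{T}_{14}]=[T_{23},\NS^{(2)}_{1}+1]$; prepending $\TW{k_{34}}$ to the chains on both sides and rewriting the two ends via the formulas just obtained for $\TP{T}_{14}$ and for $[T_{24},S_{3}]^{\ast}$ yields exactly $[\TW{k_{12}+k_{34}-2},k_{34}+1,T_{24}^{\ast}\TA\TW{\NS^{(1)}_{1}}]=[\TW{k_{12}+1}\TA T_{24}^{\ast},\NS^{(2)}_{1}+1]$. Counting vertices, with $\NV(A\TA B)=\NV(A)+\NV(B)-1$, the left side has $k_{12}+k_{34}+\NV(T_{24}^{\ast})+\NS^{(1)}_{1}-2$ vertices and the right side has $k_{12}+\NV(T_{24}^{\ast})+1$, so $k_{34}+\NS^{(1)}_{1}=3$.

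The step I expect to be the main obstacle is the identification of orientations in the second paragraph: ruling out $A^{(1)}_{1}=\TP{T}_{24}$ must be done carefully enough that $A^{(1)}_{1}=T_{14}$, $B^{(1)}_{1}=\TP{T}_{24}$ is genuinely forced rather than merely one formally admissible option. Once the four chains are correctly labelled, everything else is routine manipulation in the linear-chain calculus together with the single vertex count.
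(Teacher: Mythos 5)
Your first and third paragraphs follow the paper's own argument and are correct: $\NS^{(1)}_2=1$ and $s_3=k_{12}+2$ come from $A^{(1)}_2=\TW{\NS^{(1)}_2}\TA[T_{12},k_{12}+1,\TW{l_{22}}]$ with $T_{12}=\emptyset$, $l_{22}=0$ and $A^{(1)}_2=S_3$ irreducible; and once $A^{(1)}_1=T_{14}$, $B^{(1)}_1=\TP{T}_{24}$ are known, your manipulation (transpose the identity of Lemma~\ref{lem:4-1}, prepend $\TW{k_{34}}$, substitute $\TP{T}_{14}=T_{24}^{\ast}\TA\TW{\NS^{(1)}_1}$ and $[\TW{k_{34}},T_{23}]=\TW{k_{12}+1}\TA T_{24}^{\ast}$) and the final vertex count giving $k_{34}+\NS^{(1)}_1=3$ are exactly what the paper does.

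The second paragraph, however, contains a genuine gap. A small but consequential slip first: the alternative to $(A^{(1)}_1,B^{(1)}_1)=(T_{14},\TP{T}_{24})$ is $(T_{24},\TP{T}_{14})$, not $(\TP{T}_{24},T_{14})$ --- the $A$-chains carry the left-to-right orientation of Figure~\ref{fig} and the $B$-chains the reversed one, and since $\ast$ and $\TA$ are not transpose-symmetric this affects the computation. More importantly, the mechanism you invoke to exclude the wrong labelling does not suffice. Carrying out the substitution with $A^{(1)}_1=T_{24}$, $B^{(1)}_1=\TP{T}_{14}$ (so $T_{24}^{\ast}=[\TP{T}_{14},\NS^{(1)}_1+1]$) leads to
\[
[T_{14},k_{34}+1,\TW{k_{12}+k_{34}-2}]=[\NS^{(2)}_1+1,\NS^{(1)}_1+1,T_{14}\TA\TW{k_{12}+1}],
\]
in which the vertex count is \emph{not} contradictory: it merely forces $k_{34}=3$. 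The contradiction in the paper comes only afterwards, from comparing entries of the resulting equal-length chains (after cancelling the common tail $\TW{k_{12}}$ one gets $[T_{14},4,2]=[\NS^{(2)}_1+1,\NS^{(1)}_1+1,T_{14}\TA\TW{1}]$, whose right-hand end is $\ge 3$ because $T_{14}$ is admissible). So the step you yourself flagged as the main obstacle is indeed the one that is missing; ``$T_{24}$ reappears wrapped around itself, impossible by a length count'' does not close it.
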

\begin{proof}
By Lemma~\ref{lem:0},
we get
$\NS^{(1)}_2=1$,
$s_3=k_{12}+2$
and
$[\TW{k_{34}},T_{23}]=\TW{k_{12}+1}\TA T_{24}^{\ast}$.
Either $B^{(1)}_1=\TP{T}_{24}$ or $B^{(1)}_1=\TP{T}_{14}$.
Suppose $B^{(1)}_1=\TP{T}_{14}$.
We have $A^{(1)}_1=T_{24}$.
By Proposition~\ref{prop:cres},
$[\TW{k_{34}},T_{23}]=[\TW{k_{12}+1}\TA \TP{T}_{14},\NS^{(1)}_1+1]$.
By Lemma~\ref{lem:4-1},
we have
$[T_{14},k_{34}+1,\TW{k_{12}+k_{34}-2}]=[\NS^{(2)}_1+1,\TP{T}_{23},\TW{k_{34}}]=[\NS^{(2)}_1+1,\NS^{(1)}_1+1,T_{14}\TA\TW{k_{12}+1}]$.
Thus $k_{34}=3$.
It follows that
$[T_{14},4,2]=[\NS^{(2)}_1+1,\NS^{(1)}_1+1,T_{14}\TA\TW{1}]$,
which is impossible.
Hence $A^{(1)}_1=T_{14}$, $B^{(1)}_1=\TP{T}_{24}$.
By Proposition~\ref{prop:cres},
$T_{14}=\TW{\NS^{(1)}_1}\TA \TP{T}_{24}^{\ast}$.
By Lemma~\ref{lem:4-1},
$[T_{23},\NS^{(2)}_1+1]=[\TW{k_{12}-2},k_{34}+1,T_{24}^{\ast}\TA\TW{\NS^{(1)}_1}]$.
Thus
$[\TW{k_{12}+1}\TA T_{24}^{\ast},\NS^{(2)}_1+1]=[\TW{k_{12}+k_{34}-2},k_{34}+1,T_{24}^{\ast}\TA\TW{\NS^{(1)}_1}]$.
Hence $3=k_{34}+\NS^{(1)}_1$.
\end{proof}

Case (1--$2_\text{a}$): $k_{34}=1$.
By Lemma~\ref{lem:4-1-2},
we have $\NS^{(1)}_1=2$, $\NS^{(2)}_1=1$ and
$[\TW{k_{12}+1}\TA T_{24}^{\ast},2]=[\TW{k_{12}},T_{24}^{\ast}\TA\TW{2}]$.
Thus 
$\TW{1}\TA T_{24}^{\ast}=T_{24}^{\ast}\TA\TW{1}$.
Hence $[T_{24},2]=[2,T_{24}]$.
There exists a positive integer $l$ such that $T_{24}=\TW{l}$.
Put $k=k_{12}-1$.
By Lemma~\ref{lem:4-1},
we have $k\ge1$, $[S_1,T_{11}]=B^{(1)}_{2}=\TW{k+1}$.
By Lemma~\ref{lem:4-1-2}, $A^{(1)}_2=[k+3]$.
Since $B^{(1)}_1=\TW{l}$,
we get $A^{(1)}_1=[2,l+2]$ by Proposition~\ref{prop:cres}.
By Lemma~\ref{lem:0},
we infer $B^{(2)}_1=[\TW{k+1},l+2]$.
By Proposition~\ref{prop:cres},
$A^{(2)}_1=[3,\TW{l-1},k+3]$.
It follows that
the numerical data of $C$ is equal to
$\{(((l+1)(k+2))_2,(k+2)_{l+1}),\;\;((l+1)(k+2)+1,l(k+2)+1,(k+2)_l)\}$,
which coincides with the data 3 with $a=l+1$, $b=k+2$.

Case (1--$2_\text{b}$): $k_{34}=2$.
By Lemma~\ref{lem:4-1-2},
we have $\NS^{(1)}_1=1$ and
$[\TW{k_{12}},3,T_{24}^{\ast}\TA\TW{1}]=[\TW{k_{12}+1}\TA T_{24}^{\ast},\NS^{(2)}_1+1]$.
Thus $[2,T_{24}^{\ast}]=[T_{24}^{\ast},\NS^{(2)}_1]$.
Hence $T_{24}\TA\TW{1}=\TW{\NS^{(2)}_1-1}\TA T_{24}$.
This shows that $\NS^{(2)}_1=2$ and $[2,T_{24}^{\ast}]=[T_{24}^{\ast},2]$.
There exists a positive integer $l$ such that $T_{24}^{\ast}=\TW{l}$.
We have $T_{24}=[l+1]$.
Put $k=k_{12}-1$.
By Lemma~\ref{lem:4-1},
we see $k\ge1$, $[S_1,T_{11}]=B^{(1)}_2=\TW{k+1}$.
We have $A^{(1)}_2=[k+3]$ by Lemma~\ref{lem:4-1-2}.
Since $B^{(1)}_1=[l+1]$,
we get $A^{(1)}_1=[3,\TW{l-1}]$ by Proposition~\ref{prop:cres}.
By Lemma~\ref{lem:0},
we infer $B^{(2)}_1=[\TW{k},3,\TW{l-1}]$.
By Proposition~\ref{prop:cres},
$A^{(2)}_1=[2,l+2,k+2]$.
It follows that
the numerical data of $C$ is equal to
$\{((l+1)(k+2),l(k+2),(k+2)_{l}),\;\;((l(k+2)+k+1)_2,(k+2)_l,k+1)\}$,
which coincides with the data 4 with $a=l$, $b=k+2$.

Case (2): $g_2=2$.
We have $T_{12}\ne\emptyset\ne T_{13}$ and $A^{(2)}_2=F_1'$.
\begin{lemma}\label{lem:4-2}
The following assertions hold.
\begin{enumerate}
\item[(i)]
$g_1=1$,
$T_{24}=\emptyset$,
$A^{(1)}_1=[T_{14},S_3]$,
$B^{(2)}_1=\TP{T}_{12}$,
$A^{(2)}_1=\TP{T}_{13}$.
\item[(ii)]
$k_{12}\ge2$,
$f_1\ge4$, $f_2=2$, $s_3=k_{12}+1=k_{34}+f_1-2$,
$k_{34}+\NS^{(1)}_1=3$,
$\NS^{(2)}_1=k_{34}$,
$\NS^{(2)}_2=1$.
\item[(iii)]
$T_{13}=T_{12}^{\ast}\TA\TW{\NS^{(2)}_1}$,
$T_{14}=\TW{\NS^{(1)}_1}\TA T_{12}$,
$T_{23}=\TW{f_1-3}$,
$[F_1',T_{12}^{\ast}\TA\TW{\NS^{(2)}_1}]=[\TW{1}^{\ast k_{12}-1}\TA\TW{k_{34}}\TA T_{12}^{\ast},\NS^{(1)}_1+1]$.
\end{enumerate}
\end{lemma}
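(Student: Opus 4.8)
The plan is to argue exactly as in the proofs of Lemmas~\ref{lem:3-2} and \ref{lem:4-1-2}, feeding the data recorded in Lemma~\ref{lem:0} (with $l_{22}=0$, since $T_{22}=\emptyset$ throughout this subsection) into Proposition~\ref{prop:cres} and simplifying with the adjoint calculus of Lemmas~\ref{lem:adj} and \ref{lem:indf}. Recall we are in the case $g_2=2$, $A^{(2)}_2=F_1'$, with $T_{11},T_{12},T_{13},T_{23}$ all non-empty. First I would pin down $F_2$: since $A^{(2)}_2=[F_1']$ is a single vertex, Proposition~\ref{prop:cres}(i) for $P_2$ with $i=2$ gives $[F_1']=\TW{\NS^{(2)}_2}\TA[F_2',T_{23}]^{\ast}$, and a $\TA$-product of this shape is a single vertex only if $\NS^{(2)}_2=1$ and $[F_2',T_{23}]^{\ast}$ is a single vertex; hence $\NS^{(2)}_2=1$, $[F_2',T_{23}]=\TW{f_1-2}$, so $f_2=2$ and $T_{23}=\TW{f_1-3}$, and $T_{23}\ne\emptyset$ forces $f_1\ge4$. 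Substituting $f_2=2$, $T_{23}=\TW{f_1-3}$ into $[T_{24},S_3]^{\ast}=[\TW{k_{34}-1},f_2,T_{23}]$ from Lemma~\ref{lem:0}(ii) gives $[T_{24},S_3]^{\ast}=\TW{k_{34}+f_1-3}$, hence $[T_{24},S_3]=[k_{34}+f_1-2]$; thus $T_{24}=\emptyset$ and $s_3=k_{34}+f_1-2$.

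Next I would read off $g_1=1$ and the shape of $A^{(1)}_1$ from Figure~\ref{fig}. With $T_{24}=\emptyset$, the section $S_3$ meets in $D$ only $F_0'=D^{(1)}_0$ and, if $T_{14}\ne\emptyset$, the end of $T_{14}$; so the maximal subchain of $D$ reaching $S_3$ on the $D^{(1)}_0$-side is $[T_{14},S_3]$ and contains no branching vertex of $D$. Since $B^{(1)}_{g_1}=[S_1,T_{11}]$ and $D^{(1)}_0$ is the branching vertex of $\sigma^{-1}(P_1)+C'$ there, $A^{(1)}_{g_1}$ is precisely this subchain, and the absence of a further branching vertex inside it forces $g_1=1$ with $A^{(1)}_1=[T_{14},S_3]$. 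The case $T_{14}=\emptyset$ is impossible, for then $A^{(1)}_1=[S_3]$ would be a single vertex, whereas Lemma~\ref{lem:0}(ii) and Proposition~\ref{prop:cres}(i) give $A^{(1)}_1=\TW{\NS^{(1)}_1}\TA[T_{12},k_{12}+1]$, a chain of length $\ge2$ because $T_{12}\ne\emptyset$. Writing out that last identity (the merging occurs at the $\TW{\NS^{(1)}_1}$–$T_{12}$ junction, not at the free end), we get $A^{(1)}_1=[\TW{\NS^{(1)}_1}\TA T_{12},\,k_{12}+1]$, so $T_{14}=\TW{\NS^{(1)}_1}\TA T_{12}$ and $s_3=k_{12}+1$; combining with $s_3=k_{34}+f_1-2$, $f_1\ge4$, $k_{34}\ge1$ yields $k_{12}=k_{34}+f_1-3\ge2$.

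Then I would separate $B^{(2)}_1$ from $A^{(2)}_1$. The branching vertex $A^{(2)}_{2,1}=F_1'$ of $\sigma^{-1}(P_2)$ meets, in $D$, the ends of $T_{12}$ and $T_{13}$ adjacent to it (besides $S_2$), so $\{A^{(2)}_1,B^{(2)}_1\}$, read with the conventions of Lemma~\ref{lem:cres}, is either $\{\TP{T}_{13},\TP{T}_{12}\}$ or $\{T_{12},T_{13}\}$. Assuming $B^{(2)}_1=T_{13}$, $A^{(2)}_1=T_{12}$, Proposition~\ref{prop:cres}(i) for $P_2$ with $i=1$ gives $T_{12}=\TW{\NS^{(2)}_1}\TA T_{13}^{\ast}$, hence $T_{14}=\TW{\NS^{(1)}_1}\TA\TW{\NS^{(2)}_1}\TA T_{13}^{\ast}$; plugging this into $[F_1',T_{13}]^{\ast}=[T_{14},k_{34}+1,\TW{k_{12}-1}]$ of Lemma~\ref{lem:0}(ii), taking adjoints, and simplifying via Lemma~\ref{lem:adj}(i)--(ii) produces $[F_1',T_{13}]=\TW{1}^{\ast k_{12}-1}\TA\TW{k_{34}}\TA[T_{13},\NS^{(2)}_1+1,\NS^{(1)}_1+1]$, and comparing $\NV$ of both sides (using $k_{12}\ge2$) forces $k_{34}=0$, a contradiction. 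Hence $B^{(2)}_1=\TP{T}_{12}$, $A^{(2)}_1=\TP{T}_{13}$. Now Proposition~\ref{prop:cres}(i) for $P_2$ with $i=1$ gives $(\TP{T}_{13})^{\ast}=[\TP{T}_{12},\NS^{(2)}_1+1]$; reversing and applying Lemma~\ref{lem:indf}(ii) and Lemma~\ref{lem:adj}(i) gives $T_{13}=T_{12}^{\ast}\TA\TW{\NS^{(2)}_1}$. Substituting $T_{13}=T_{12}^{\ast}\TA\TW{\NS^{(2)}_1}$ and $T_{14}=\TW{\NS^{(1)}_1}\TA T_{12}$ back into $[F_1',T_{13}]^{\ast}=[T_{14},k_{34}+1,\TW{k_{12}-1}]$, using the restatement $(\TW{n}\TA X)^{\ast}=[X^{\ast},n+1]$ of Lemma~\ref{lem:adj}(i) together with Lemma~\ref{lem:adj}(ii) to compute the adjoint of the right-hand side, yields the stated identity $[F_1',T_{12}^{\ast}\TA\TW{\NS^{(2)}_1}]=[\TW{1}^{\ast k_{12}-1}\TA\TW{k_{34}}\TA T_{12}^{\ast},\NS^{(1)}_1+1]$; matching leading entries and lengths in this identity then forces $\NS^{(2)}_1=k_{34}$ and $\NS^{(2)}_1+\NS^{(1)}_1=k_{34}+\NS^{(1)}_1=3$.

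I expect the main obstacle to be the last paragraph: the several interlocking $\TA$- and adjoint-identities (the elimination $B^{(2)}_1\ne T_{13}$ and the extraction of $\NS^{(2)}_1=k_{34}$, $k_{34}+\NS^{(1)}_1=3$ from the final identity) are each individually routine, but the chain of substitutions and reversals is long, and it is easy to misplace a transpose or an off-by-one in an exponent; a careful, uniform bookkeeping of $\NV$ and of leading/trailing entries, as in the earlier cases of this section, should keep it under control.
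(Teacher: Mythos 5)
Your proposal reproduces the paper's own proof essentially step for step: Proposition~\ref{prop:cres} applied to $A^{(2)}_2=F_1'$ to get $f_2=2$, $\NS^{(2)}_2=1$, $T_{23}=\TW{f_1-3}$; Lemma~\ref{lem:0} to get $[T_{24},S_3]=[k_{34}+f_1-2]$, hence $T_{24}=\emptyset$, $g_1=1$, $T_{14}=\TW{\NS^{(1)}_1}\TA T_{12}$, $s_3=k_{12}+1$, $k_{12}\ge2$; the same length contradiction to exclude $B^{(2)}_1=T_{13}$; and the same final identity. The only spot needing slightly more than ``leading entries and lengths'' is the extraction of $k_{34}+\NS^{(1)}_1=3$ — the paper passes to the adjointed identity $[\NS^{(2)}_1+1,T_{12}\TA\TW{f_1-1}]=[\TW{\NS^{(1)}_1}\TA T_{12},k_{34}+1,\TW{k_{12}-1}]$ and counts lengths to get $f_1=\NS^{(1)}_1+k_{12}$, then combines with $k_{12}+1=k_{34}+f_1-2$ — but this is precisely the routine bookkeeping you flag and is fully recoverable from your setup.
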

\begin{proof}
By Proposition~\ref{prop:cres},
$[F_2',T_{23},\NS^{(2)}_2+1]=\TW{f_1-1}$.
This shows that $f_1\ge4$,
$f_2=2$, $\NS^{(2)}_2=1$ and $T_{23}=\TW{f_1-3}$.
By Lemma~\ref{lem:0},
$[T_{24},S_3]=[k_{34}+f_1-2]$.
We have $T_{24}=\emptyset$ and $s_3=k_{34}+f_1-2$.
It follows that $g_1=1$ and $A^{(1)}_1=[T_{14},S_3]$.
By Lemma~\ref{lem:0},
$[T_{14},S_3]=\TW{\NS^{(1)}_1}\TA[T_{12},k_{12}+1]$.
Since $T_{12}\ne\emptyset$,
we obtain $\emptyset\ne\TW{\NS^{(1)}_1}\TA T_{12}=T_{14}$ and $k_{12}+1=s_3=k_{34}+f_1-2$.
We have $k_{12}\ge2$ and  $T_{14}^{\ast}=[T_{12}^{\ast},\NS^{(1)}_1+1]$.
By Lemma~\ref{lem:0},
$[F_1',T_{13}]=[\TW{1}^{\ast k_{12}-1}\TA\TW{k_{34}}\TA T_{12}^{\ast},\NS^{(1)}_1+1]$.
Either $B^{(2)}_1=T_{13}$ or $B^{(2)}_1=\TP{T}_{12}$.
If $B^{(2)}_1=T_{13}$, then  $A^{(2)}_1=T_{12}$.
By Proposition~\ref{prop:cres},
$[T_{13},\NS^{(2)}_1+1]=T_{12}^{\ast}$.
Hence
$[F_1',T_{13}]=[\TW{1}^{\ast k_{12}-1}\TA\TW{k_{34}}\TA T_{13},\NS^{(2)}_1+1,\NS^{(1)}_1+1]$,
which is impossible.
Thus $B^{(2)}_1=\TP{T}_{12}$, $A^{(2)}_1=\TP{T}_{13}$.
By Proposition~\ref{prop:cres},
$T_{13}=T_{12}^{\ast}\TA\TW{\NS^{(2)}_1}$.
We have
$[F_1',T_{12}^{\ast}\TA\TW{\NS^{(2)}_1}]=[\TW{1}^{\ast k_{12}-1}\TA\TW{k_{34}}\TA T_{12}^{\ast},\NS^{(1)}_1+1]$.
This means that
$\NS^{(2)}_1=k_{34}$ and
$[\NS^{(2)}_1+1,T_{12}\TA\TW{f_1-1}]=[\TW{\NS^{(1)}_1}\TA T_{12},k_{34}+1,\TW{k_{12}-1}]$.
We have $f_1=\NS^{(1)}_1+k_{12}$.
Hence $k_{34}+\NS^{(1)}_1=3$.
\end{proof}

Case (2--1): $k_{34}=1$.
By Lemma~\ref{lem:4-2},
$k_{12}=f_1-2$,
$\NS^{(1)}_1=2$,
$\NS^{(2)}_1=1$,
$[F_1',T_{12}^{\ast}\TA\TW{1}]=[\TW{1}^{\ast k_{12}}\TA T_{12}^{\ast},3]$.
We have
$[2,T_{12}\TA\TW{f_1-1}]=[\TW{2}\TA T_{12},\TW{f_1-2}]$.
This shows $T_{12}\TA\TW{1}=\TW{1}\TA T_{12}$.
There exists a positive integer $l$ such that $T_{12}=[l+1]$.
Put $k=k_{12}-1$.
By Lemma~\ref{lem:4-2},
we have $k\ge1$, $B^{(2)}_1=\TP{T}_{12}=[l+1]$.
Furthermore,
we see
$A^{(2)}_1=\TP{T}_{13}=[3,\TW{l-1}]$,
$A^{(2)}_2=[k+3]$,
$A^{(1)}_1=[T_{14},S_3]=[2,l+2,k+2]$,
$B^{(2)}_2=[F_2',T_{23}]=\TW{k+1}$.
By Proposition~\ref{prop:cres},
we obtain
$[B^{(1)}_1,3]=A^{(1)\ast}_1=\TW{k+1}\TA\TW{l+1}\TA\TW{1}$.
Hence $B^{(1)}_1=[\TW{k},3,\TW{l-1}]$.
The numerical data of $C$ is equal to
$\{((l(k+2)+k+1)_2,(k+2)_l,k+1),\;\;((l+1)(k+2),l(k+2),(k+2)_l)\}$,
which coincides with the data 4 with $a=l$, $b=k+2$.

Case (2--2): $k_{34}>1$.
By Lemma~\ref{lem:4-2},
we get
$k_{34}=2$,
$\NS^{(1)}_1=1$,
$\NS^{(2)}_1=2$,
$s_3=k_{12}+1=f_1\ge4$,
$[F_1',T_{12}^{\ast}\TA\TW{2}]=[k_{12}+1,\TW{1}\TA T_{12}^{\ast},2]$.
We have $T_{12}^{\ast}\TA\TW{1}=\TW{1}\TA T_{12}^{\ast}$.
There exists a positive integer $l$ such that $T_{12}^{\ast}=[l+1]$.
Put $k=k_{12}-2$.
We have $k\ge 1$,
$B^{(2)}_1=\TP{T}_{12}=\TW{l}$.
Moreover, we see
$A^{(2)}_1=\TP{T}_{13}=[2,l+2]$,
$A^{(2)}_2=[k+3]$,
$A^{(1)}_1=[T_{14},S_3]=[3,\TW{l-1},k+3]$,
$B^{(2)}_2=[F_2',T_{23}]=\TW{k+1}$.
By Proposition~\ref{prop:cres},
we obtain
$[B^{(1)}_1,2]=A^{(1)\ast}_1=\TW{k+2}\TA[l+1,2]$.
Hence $B^{(1)}_1=[\TW{k+1},l+2]$.
The numerical data of $C$ is equal to
$\{((l+1)(k+2)+1,l(k+2)+1,(k+2)_{l}),\;\;(((l+1)(k+2))_2,(k+2)_{l+1})\}$,
which coincides with the data 3 with $a=l+1$, $b=k+2$.
\begin{figure}
\begin{center}
\includegraphics{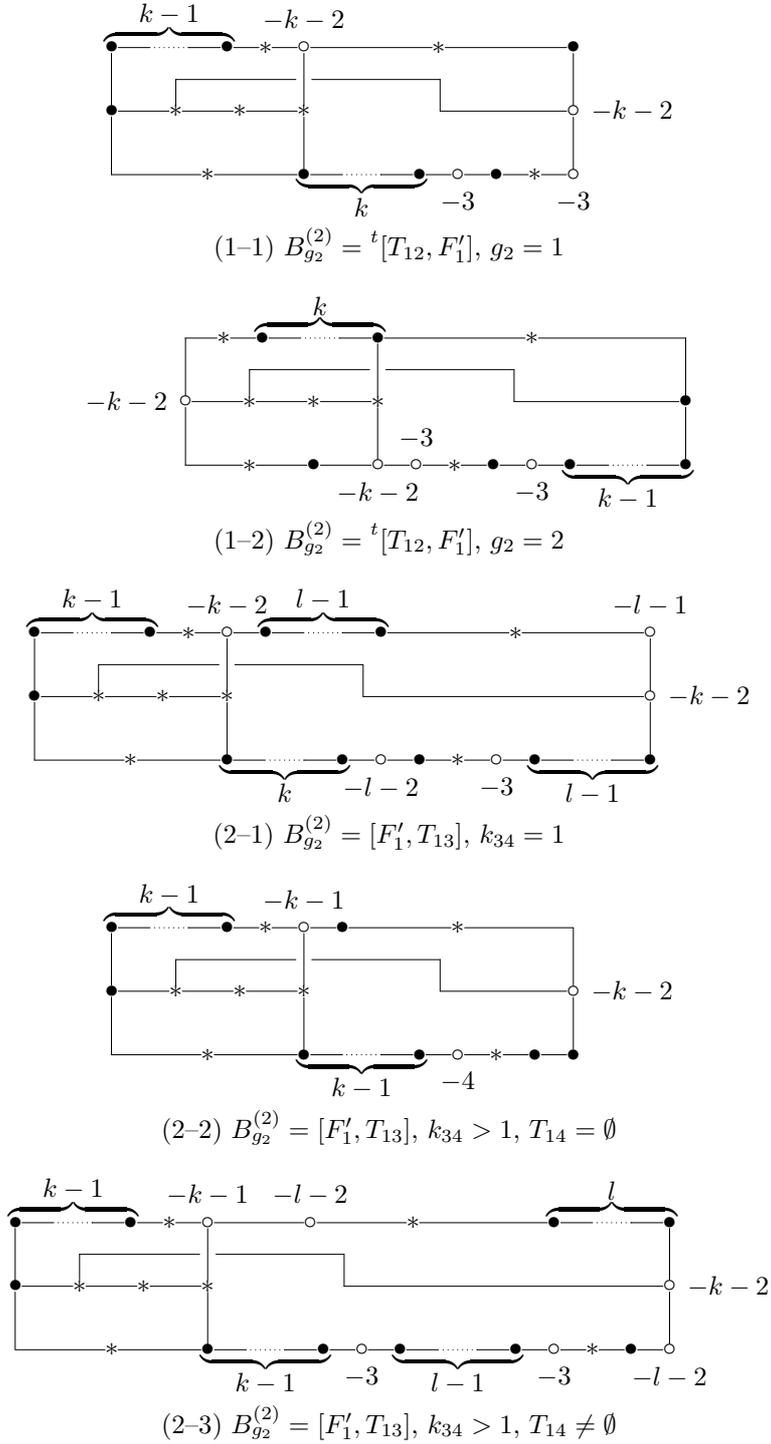}
\end{center}
\caption{The dual graph of $D+E_1+E_2$}\label{figf}
\end{figure}
\setcounter{figure}{3}
\begin{figure}
\begin{center}
\includegraphics{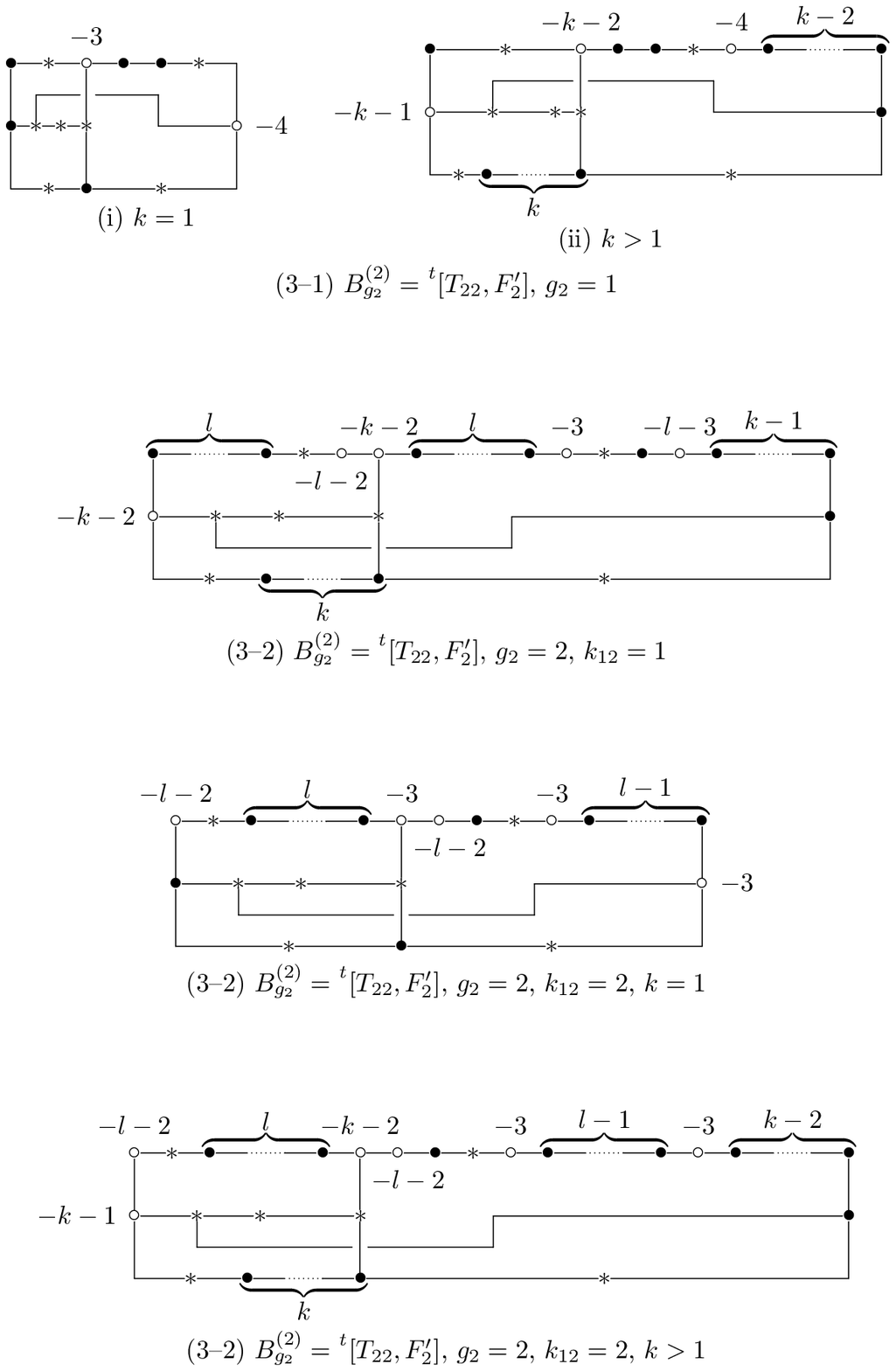}
\end{center}
\caption{The dual graph of $D+E_1+E_2$ --- continued}\label{figf2}
\end{figure}
\setcounter{figure}{3}
\begin{figure}
\begin{center}
\includegraphics{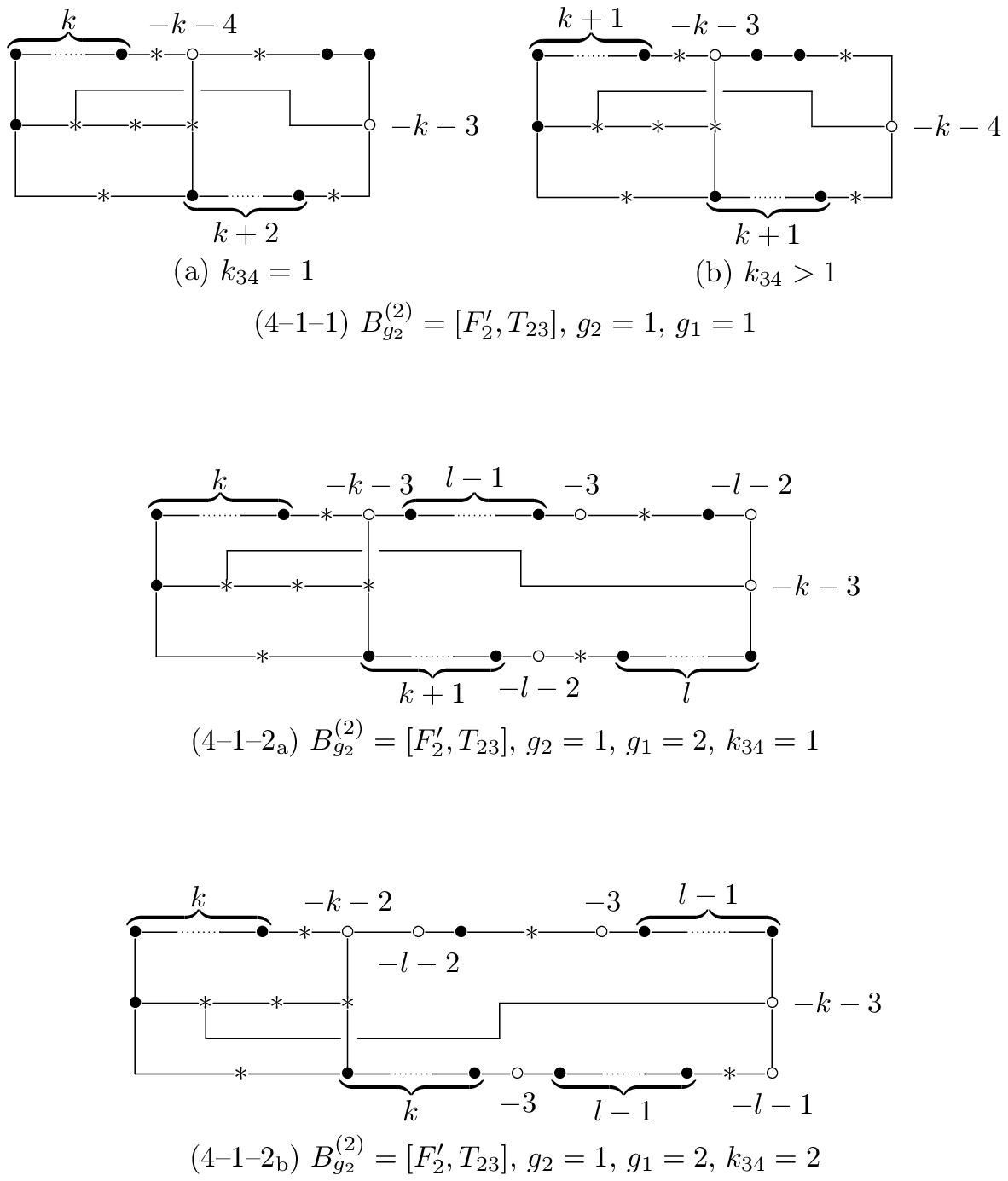}
\end{center}
\caption{The dual graph of $D+E_1+E_2$ --- continued}\label{figf3}
\end{figure}
\setcounter{figure}{3}
\begin{figure}
\begin{center}
\includegraphics{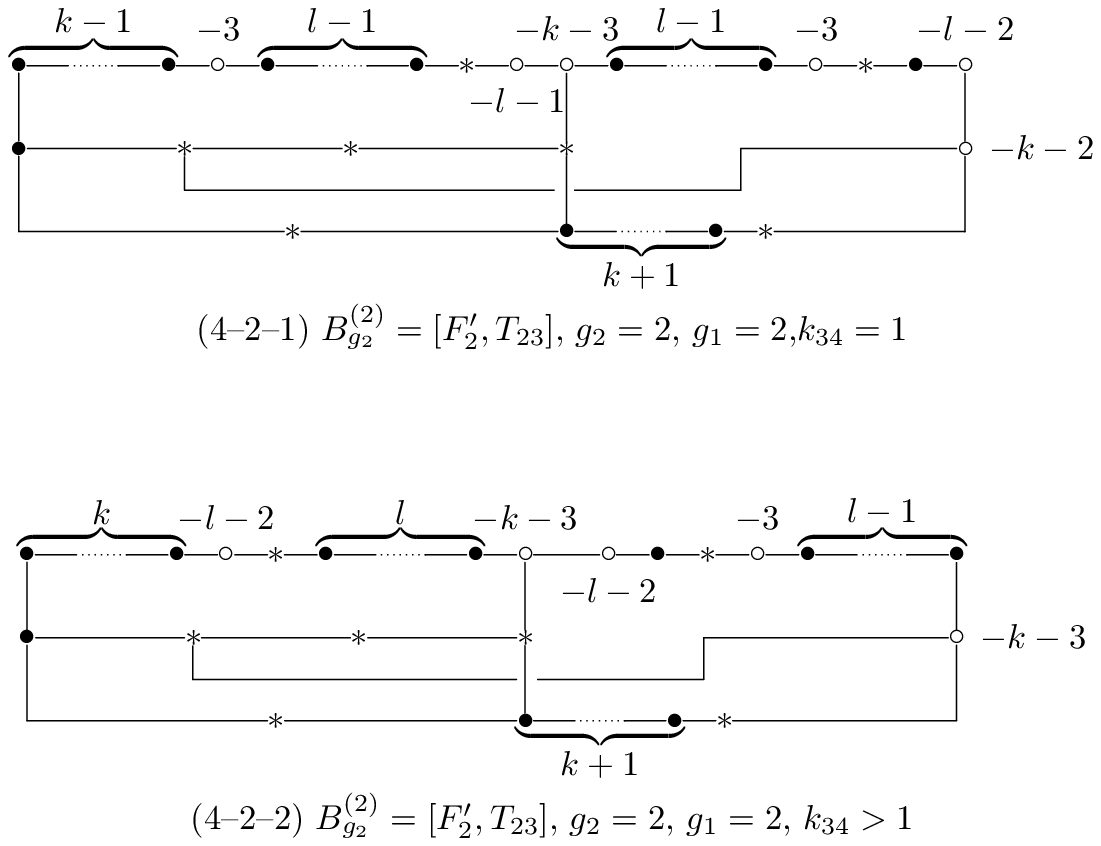}
\end{center}
\caption{The dual graph of $D+E_1+E_2$ --- continued}\label{figf4}
\end{figure}

We list
the dual graphs of $D+E_1+E_2$
in Figure~\ref{figf}.
We prove the converse assertion of Theorem~\ref{thm1}.
Let $\Gamma$ be one of the weighted dual graphs
in Figure~\ref{figf}.
It follows from \cite[Proposition 4.7]{fu} that
the sub-graphs $F_0$, $F_1$ and $F_2$ of $\Gamma$
can be contracted to three disjoint $0$-curves.
After the contraction,
$S_1$, $S_2$ and $S_3$ become disjoint $0$-curves
and meet with each curve $F_i$ transversally.
Thus
$\Gamma$ can be realized by blow-ups over
three sections and fibers of $\Sigma_0$.
By Lemma~\ref{lem:bu},
$\Gamma-E_1-E_2-C'$
can be contracted to two points of $\SP^2$.
Hence
all the numerical data in Theorem~\ref{thm1}
can be realized
as those of rational cuspidal plane curves.
\begin{acknowledgment}
The author would like to express his thanks to Professor Fumio Sakai
for his helpful advice.
The author was supported by the Fuujukai Foundation.
\end{acknowledgment}
%
%%%%%%%%%%%%%%%%%%%%%%%%%%%%%%%%%%%%%%%%%%%%%%%%%%%%%%%%%%%%%%%%%%%%

\ \\
\textsc{%
{\small
Department of Mathematics,
Graduate School of Science and Engineering,
Saitama University,\\
Saitama-City, Saitama 338--8570,
Japan.}}\\
{\small
\textit{E-mail address}:  \texttt{ktono@rimath.saitama-u.ac.jp}
}

\begin{thebibliography}{MaSa}
  \bibitem[BK]{bk}
    Brieskorn, E., Kn\"{o}rrer, H.:
    Plane algebraic curves.
    Basel, Boston, Stuttgart:
    Birkh\"{a}user 1986.
  \bibitem[Fe]{fe}
    Fenske, T.:
    Rational 1- and 2-cuspidal plane curves,
    Beitr\"{a}ge zur Algebra und Geometrie \textbf{40}, (1999), 309--329.
  \bibitem[Fu]{fu}
    Fujita, T.:
    On the topology of non-complete algebraic surfaces,
    J. Fac. Sci. Univ. Tokyo \textbf{29}, (1982), 503--566.
  \bibitem[FZ1]{fz:def}
    Flenner, H., Zaidenberg, M.:
    $\SQ$-acyclic surfaces and their deformations,
    Contemp. Math. \textbf{162}, (1994), 143--208.
  \bibitem[FZ2]{fz:dd2}
     Flenner, H., Zaidenberg, M.:
     On a class of rational cuspidal plane curves,
     Manuscripta Math. \textbf{89}, (1996), 439--459.
  \bibitem[Ka]{ka:cls}
    Kawamata, Y.:
    On the classification of non-complete algebraic surfaces,
    Proc. Copenhagen, Lecture Notes in Math. \textbf{732}, (1979), 215--232.
  \bibitem[Ki]{kiz}
    Kizuka, T.:
    Rational functions of $\SC^{\ast}$-type on the two-dimensional complex projective space,
    T\^{o}hoku Math. J. \textbf{38}, (1986), 123--178.
  \bibitem[Ko]{ko}
    Kojima, H.:
    Complements of plane curves with logarithmic Kodaira dimension zero,
    J. Math. Soc. Japan \textbf{52}, (2000), 793--806.
  \bibitem[MaSa]{masa}
    Matsuoka, T., Sakai, F.:
    The degree of rational cuspidal curves,
    Math. Ann. \textbf{285}, (1989), 233--247.
  \bibitem[MiSu]{misu}
    Miyanishi, M., Sugie, T.:
    $\SQ$-homology planes with $\SC^{\ast\ast}$-fibrations,
    Osaka J. Math. \textbf{28}, (1991), 1--26.
  \bibitem[MT1]{mits}
    Miyanishi, M., Tsunoda, S.:
    Non-complete algebraic surfaces with logarithmic Kodaira dimension $-\infty$ and with non-connected boundaries at infinity,
    Japan. J. Math. \textbf{10}, No.~2, (1984), 195--242.
  \bibitem[MT2]{mits2}
    Miyanishi, M., Tsunoda, S.:
    Absence of the affine lines on the homology planes of general type,
    J.~Math.~Kyoto Univ.~\textbf{32}, (1992), 443--450.
  \bibitem[O]{or}
    Orevkov, S.~Yu.:
    On rational cuspidal curves I. Sharp estimate for degree via multiplicities,
    Math.~Ann.~\textbf{324}, (2002), 657--673.
  \bibitem[To]{to:orev}
    Tono, K.:
    On Orevkov's rational cuspidal plane curves,
    J. Math. Soc. Japan \textbf{64}, (2012), 365--385.
  \bibitem[Ts]{Ts}
    Tsunoda, S.:
    The complements of projective plane curves,
    RIMS-K\^{o}ky\^{u}roku \textbf{446}, (1981), 48--56.
  \bibitem[W]{Wak}
    Wakabayashi, I.:
    On the logarithmic Kodaira dimension of the complement of a curve in $\SP^2$,
    Proc. Japan Acad. \textbf{54}, Ser. A, (1978), 157--162.
\end{thebibliography}
\end{document}